\tikzset{->-/.style={decoration={markings, mark=at position #1 with
  {\arrow{>}}},postaction={decorate}}}
\tikzstyle{ball} = [circle,shading=ball, ball color=black,
\tikzset{
      labl/.style={anchor=south, rotate=90, inner sep=.5mm}
      }
\newtheorem{theorem}{Theorem}
\newtheorem{prop}[theorem]{Proposition}
\newtheorem{lemma}[theorem]{Lemma}
\newtheorem{cor}[theorem]{Corollary}
\theoremstyle{definition}
\newtheorem{definition}[theorem]{Definition}
\newtheorem{example}[theorem]{Example}
\newtheorem{remark}[theorem]{Remark}
\newcommand{\overbar}[1]{\mkern 1.5mu\overline{\mkern-1.5mu#1\mkern-1.5mu}\mkern
  1.5mu}
\newcommand{\Z}{\mathbb{Z}}
\newcommand{\Q}{\mathbb{Q}}
\newcommand{\R}{\mathbb{R}}
\newcommand{\T}{\mathbf{T}}
\newcommand{\tL}{\widetilde{L}}
\newcommand{\tH}{\widetilde{H}}
\newcommand{\Hilb}{\mathcal{H}}
\newcommand{\tb}{\tilde{\beta}}
\DeclareMathOperator{\sgn}{\mathrm{sgn}}
\DeclareMathOperator{\rdeg}{\mathrm{rdeg}}
\DeclareMathOperator{\Span}{\mathrm{Span}}
\DeclareMathOperator{\rk}{\mathrm{rank}}
\DeclareMathOperator{\im}{\mathrm{im}}
\DeclareMathOperator{\Hom}{\mathrm{Hom}}
\DeclareMathOperator{\diag}{\mathrm{diag}}
\DeclareMathOperator{\crit}{\mathcal{K}}
\DeclareMathOperator{\Skel}{\mathrm{Skel}}
\DeclareMathOperator{\cok}{\mathrm{cok}}
\DeclareMathOperator{\Div}{\mathrm{Div}}
\DeclareMathOperator{\Pic}{\mathrm{Pic}}
\DeclareMathOperator{\Jac}{\mathrm{Jac}}
\DeclareMathOperator{\class}{\mathcal{J}}
\newcommand{\pup}[1]{\partial_{\,\Upsilon,{#1}}}
\title{Simplicial Dollar Game}
\author{Jesse Kim}
\address{UCSD, La Jolla, CA 92093}
\email{jvkim@ucsd.edu}
\author{David Perkinson}
\address{Reed College, Portland, OR 97202}
\email{davidp@reed.edu}
\subjclass[2010]{05E45}
\begin{document}
\begin{abstract} 
  The dollar game is a chip-firing game introduced by Baker as a context
  in which to formulate and prove the Riemann-Roch theorem for graphs.  A
  divisor on a graph is a formal integer sum of vertices.  Each determines a
  dollar game, the goal of which is to transform the given divisor into one that
  is effective (nonnegative) using chip-firing moves.  We use Duval, Klivans,
  and Martin's theory of chip-firing on
  simplicial complexes to generalize the dollar game and results related to the
  Riemann-Roch theorem for graphs to higher dimensions.  In particular, we
  extend the notion of the degree of a divisor on a graph to a (multi)degree of
  a chain on a simplicial complex and use it to establish two main results.  The
  first of these generalizes the fact that if a
  divisor on a graph has large enough degree (at least as large as the genus of
  the graph), it is winnable; and the second generalizes the fact that trees (graphs of genus~$0$) are exactly the graphs
  on which every divisor of degree~$0$, interpreted as an instance of the dollar
  game, is winnable.
\end{abstract}

\maketitle

\section{Introduction}\label{sect: introduction}
Let~$G=(V,E)$ be a finite, connected, undirected graph with vertex set~$V$ and
edge set~$E$.  To play the dollar game on~$G$, assign an integer number of
dollars to each vertex. Negative integers are interpreted as debt.  A {\em
lending move} consists of a vertex giving one of its dollars to each of its
neighboring vertices, and a {\em borrowing  move} is the opposite, in which a vertex
takes a dollar from each neighbor.  Vertices may lend or borrow,
regardless of the number of dollars they possess.  The goal of the game
is to bring all vertices out of debt through a sequence of such moves.

The dollar game was introduced in {\em Riemann-Roch and Abel-Jacobi theory on a
finite graph}, by Baker and Norine (\cite{Baker}) as a variant of an earlier
version due to Biggs (\cite{Biggs}).  Baker and Norine's work develops the {\em
divisor theory of graphs}, which views a graph as a discrete version of an
algebraic curve or Riemann surface.  The assignment of~$a_v$ dollars to each
vertex~$v$ is formally a~{\em divisor}~$D=\sum_{v\in V}a_vv$ in the free abelian
group~$\Div(G):=\Z V$.  The net amount of money on the graph is
$\deg(D):=\sum_{v\in V}a_v$, the {\em degree} of~$D$.  Divisors~$D$ and~$D'$ are
{\em linearly equivalent}, denoted~$D\sim D'$, if one may be obtained from the
other via lending and borrowing moves.  The group of divisors modulo linear
equivalence is the {\em Picard group}~$\Pic(G)$.  Since lending and borrowing
moves conserve net wealth,~$\Pic(G)$ is graded by degree.  Its degree zero
component is the {\em Jacobian
group}~$\Jac(G)$, which is a finite group with size equal to the number of
spanning trees of~$G$.  A choice of a vertex~$v$ gives an isomorphism
\begin{align}\label{Pic iso}
  \Pic(G)&\xrightarrow{\sim}\Jac(G)\oplus\Z\\\nonumber
  [D]&\mapsto ([D-\deg(D)v],\deg(D)).
\end{align}
A divisor is {\em effective} if its coefficients are nonnegative.  Thus, in the
language of algebraic geometry, an instance of the dollar game is a
divisor~$D\in\Div(G)$, and the game is won by finding a linearly equivalent
effective divisor.

A fundamental concept introduced in~\cite{Baker} is the notion of the~{\em rank}
of a divisor.  If there is no effective divisor linearly equivalent to~$D$, then
the rank of~$D$ is~$r(D)=-1$.  Otherwise, the rank is the maximum
integer~$k$ such that~$D-E$ is linearly equivalent to an effective divisor for
all effective divisors~$E$ of degree~$k$.  In terms of the dollar game, the rank
is a measure of robustness of winnability: the dollar game~$D$ is winnable if
and only if~$r(D)\geq0$, and if~$r(D)=k>0$, it is winnable even after
removing~$k$ dollars arbitrarily.  

The Riemann-Roch theorem for graphs~(\cite[Theorem~1.12]{Baker}) has a form
nearly identical to that for algebraic curves.  It says that for
all~$D\in\Div(G)$,
\[
  r(D)-r(K-D)=\deg(D)+1-g.
\]
Here,~$g=|E|-|V|+1$ and~$K=\sum_{v\in V}\left(\deg_G(v)v-2\right)v$
where~$\deg_G(v)$ is the number of edges incident on~$v$.  These play the role
of the genus and the canonical divisor, respectively, for an algebraic curve.

Since the rank is at least~$-1$, 
\[
  r(D)=\deg(D)+1-g+r(K-D)\geq \deg(D)-g.
\]
A consequence is that if~$\deg(D)\geq g$, then the dollar game~$D$ is winnable.
This result is sharp, too: there are always unwinnable divisors of degree~$g-1$
(\cite[Theorem 1.9]{Baker}).  It follows that all divisors of degree~$0$ are
winnable if and only if~$g=0$, i.e.,~$G$ is a tree.  In summary, the dollar game has
a minimal ``winning degree''~$g$, and that minimal degree is~$0$ exactly when
the game is played on a tree.  Our main goal is to generalize these results to a
dollar game played on a simplicial complex of any dimension.

Lending moves are sometimes called {\em vertex-firings} or {\em chip-firings}
(and borrowing moves are {\em reverse firings}).  They arise naturally as an
encoding of the discrete Laplacian operator for the graph.  Duval, Klivans, and
Martin (\cite{DKM1}, \cite{DKM2}, \cite{DKM3}) use a version of a combinatorial
Laplacian to generalize the divisor theory of graphs to higher-dimensional
simplicial (and cellular) complexes.  In this theory, an~$i$-chain---a
formal integer sum of~$i$-dimensional faces---of a complex~$\Delta$ may be
thought of as an assignment of an integer ``flow'' to each~$i$-face.  Firing
an~$i$-face~$f$ then diverts flow around the~$(i+1)$-faces incident on~$f$.  The
group of~$i$-cycles modulo these firing moves is the {\em $i$-th critical group}
of the complex,~$\crit_i(\Delta)$, generalizing the Jacobian group of a graph.
By \cite[Corollary 4.2]{DKM1}, under certain restrictions on~$\Delta$, the size of
the torsion part of~$\crit_i(\Delta)$ is the number of torsion-weighted
$(i+1)$-dimensional spanning trees of~$\Delta$.

In this paper, we interpret Duval, Klivans, and Martin's theory as a
higher-dimensional dollar game.  A chain on a simplicial complex is thought of
as a distribution of wealth among the faces.  The goal of the game is to use
face-firings to redistribute wealth, leaving no face in debt.  For this purpose,
the naive version of degree as the net wealth of the system is not appropriate:
using that notion of degree, there would be simplicial complexes with chains of
arbitrarily negative degree that are winnable and arbitrarily positive degree
that are unwinnable.  The root of the problem is that, unlike for graphs,
lending and borrowing moves on simplicial complexes are not necessarily
conservative.  Instead, in Definition~\ref{def: degree} we introduce a natural
generalization of the degree of a divisor on a graph to one that is invariant
under firing moves on the chains of a complex.  Our main results generalize the
properties of divisors on graphs discussed in connection with the Riemann-Roch
theorem, above: Theorem~\ref{thm: main} shows that if the degree of a chain is
sufficiently large, then it is winnable, and Corollary~\ref{cor: t=1} shows that
for each~$i$,  all~$(i-1)$-chains of degree~$0$ are winnable if and only if
the~$i$-skeleton of the complex is a spanning forest, torsion-free in
codimension one.

Section~\ref{sect: preliminaries} sets notation and presents required background
on (abstract) simplicial complexes and polyhedral cones. In particular,~$\Delta$
always denotes a~$d$-dimensional simplicial complex. In Section~\ref{sect: the
dollar game}, we recall the definition of the~$i$-dimensional Laplacian~$L_i$
and critical group~$\crit_i(\Delta)$ for~$\Delta$ and use these to
carefully define the dollar game determined by each~$i$-chain.  Two~$i$-chains
are {\em linearly equivalent} if their difference is in the image of~$L_i$.

Section~\ref{sect: degree} defines the degree of each~$i$-chain~$\sigma$
of~$\Delta$ and relates it the winnability of the dollar game, generalizing
results from graphs (the special case~$d=1$) to higher dimensions.  Let~$\Hilb$
be the minimal additive basis, i.e., the Hilbert basis, for the monoid of
nonnegative integer points in the kernel of~$L_i$.  Using~$\Hilb$, we define the
degree of~$\sigma$ as an integer vector~$\deg(\sigma)\in\Z^{|\Hilb|}$.
By~Proposition~\ref{prop: invariance}, the degree of a chain is invariant under
linear equivalence, with the immediate consequence (Corollary~\ref{cor: winnable
implies nonnegative}) that if the dollar game determined by the chain~$\sigma$
is winnable, then~$\deg(\sigma)\geq0$.  Lemma~\ref{lemma: positive element} is a
key technical result showing there is a strictly positive element in the kernel
of~$L_i$.  By Theorem~\ref{thm: deg 0 iso}, the group of degree zero~$i$-chains
modulo linear equivalence is isomorphic to the torsion part of the~$i$-th
critical group.  In the special case where~$d=1$, this result generalizes the
fact that the Jacobian group of a connected graph is the torsion part of the Picard
group (in accordance with isomorphism~\eqref{Pic iso}).  Theorem~\ref{thm: main}
achieves one of our main goals: it says that if the degree of a chain is
sufficiently large, its corresponding dollar game is winnable.  

Section~\ref{sect: pseudomanifolds} considers the case
where~$\Delta$ is a pseudomanifold.  We compute the critical group of an
oriented pseudomanifold (Proposition~\ref{prop: pseudomanifold critical group}),
generalizing~\cite[Theorem~4.7 and subsequent remarks]{DKM1}.  Our main result
on pseudomanifolds is a combinatorial description of the Hilbert basis~$\Hilb$,
described above, in codimension one (Theorem~\ref{thm: pseudomanifold Hilbert
basis}).  The section ends with an example of calculating minimal
degrees~$\delta$ such that every chain of degree at least as large as~$\delta$ is
winnable.

Section~\ref{sect: forests} builds on the work of Duval, Klivans, and Martin
(\cite{DKM1}, \cite{DKM2}, \cite{DKM3}) on higher-dimensional forests and
critical groups.  Our main result is Corollary~\ref{cor: t=1},
which shows that all~$(i-1)$-chains of degree zero are winnable if and only if
the~$i$-skeleton is an~$i$-dimensional spanning forest, torsion-free in
codimension one. We also generalize Theorem~3.4 of~\cite{DKM1}, which for each
dimension gives an isomorphism between the critical group and the cokernel of
the {\em reduced Laplacian}---a submatrix of the Laplacian determined by a
spanning forest. In Section~\ref{sect: staco}, we consider an alternative
generalization of the set of divisors of nonnegative degree on a graph due
to Corry and Keenan~(\cite{CorryKeenan}).  We use it to characterize
higher-dimensional spanning trees that are acyclic in codimension one in terms
of winnability of the dollar game.

Section~\ref{sect: further work} poses some open questions.  Finally,
the proofs of Proposition~\ref{prop: pseudomanifold critical group} and
Theorem~\ref{thm: reduced Laplacian iso} are relegated to an appendix to avoid
distraction from our main line of argument.   

Readers interested in learning more about chip-firing on graphs and its relation to a
diverse range of mathematics may wish to consult the textbooks~\cite{Corry}
and~\cite{Klivans}.

\subsection*{Acknowledgments} 
We would like to thank Scott Corry for the idea of thinking of chip-firing on
simplicial complexes in terms of the dollar game and for sharing some of his
unpublished joint work with Liam Keenan, motivating the results in
Section~\ref{sect: staco}.  We thank Collin Perkinson and our anonymous referee for comments on the exposition.

\section{Preliminaries}\label{sect: preliminaries}
\subsection{Simplicial complexes}\label{sect: simplicial complexes} Throughout this paper,~$\Delta$ is
a~$d$-dimensional simplicial complex on the set $V=[n]:=\left\{ 1,\dots,n
\right\}$ for some integer~$n$.  A subset of~$V$ of cardinality~$i+1$ that is an
element of~$\Delta$ is an~{\em
$i$-dimensional face} or~{\em $i$-face} of~$\Delta$, and the collection of
all~$i$-faces is denoted~$\Delta_i$.  Let~$f_i=f_i(\Delta):=|\Delta_i|$ be the
number of faces of dimension~$i$.  The empty set is the single face of
dimension~$-1$.  The elements of~$V$ are called {\em vertices}.  The set of all
faces forms a poset under inclusion, graded by dimension, and its maximal
elements are the {\em facets} of~$\Delta$.  To say that~$\Delta$ has
dimension~$d$ means that its highest-dimensional facet has dimension~$d$.  The
complex~$\Delta$ is {\em pure} if all of its facets have dimension~$d$, which we
do not assume.  If~$R$ is a commutative ring, the module of~{\em
$i$-chains},~$C_i(\Delta,R)$, is the free~$R$-module with basis~$\Delta_i$.  In
particular, let~$C_i(\Delta)$ denote the integral~$i$-chains, $C_i(\Delta,\Z)$.
Take~$C_i(\Delta,R)=0$ for~$i>d$ and~$i<-1$, whereas~$C_{-1}(\Delta,R)\approx
R$.  Given an~$i$-chain~$\sigma=\sum_{f\in\Delta_i}a_ff$, we
write~$\sigma(f):=a_f$ and define the {\em support} of~$\sigma$ to be
$\mathrm{supp}(\sigma):=\left\{ f\in\Delta_i:\sigma(f)\neq0 \right\}$.

In general, our results will depend on the choice of an orientation
of~$\Delta$ (cf.~Example~\ref{example: orientation}). In order for the dollar game to be sensible, this orientation must be acyclic, i.e., for all $i$, every positive sum of $i$-faces has nonzero boundary.  Since any such orientation induces an acyclic orientation on the 1-skeleton of~$\Delta$, every acyclic orientation is the standard
orientation up to renumbering of the vertices,  so we fix the standard
orientation on~$\Delta$ induced by the natural ordering on the vertex
set~$V=[n]$.  Thus, each~$i$-face is represented by the list of its
vertices~$\overbar{v_0\cdots v_i}$ with~$v_0<\cdots< v_i$.  We fix the
lexicographic total ordering on each~$\Delta_i$ and the corresponding induced
isomorphism~$C_i(\Delta)\simeq\Z^{f_i}$.  If~$\pi$ is a permutation, we write 
\[
  \overbar{v_{\pi(0)}\cdots v_{\pi(i)}}=\sgn(\pi)\,\overbar{v_0\cdots v_i}
\] 
as chains. 

For each~$i$, there is a boundary mapping
\[
\partial_i\colon C_i(\Delta,R)\to C_{i-1}(\Delta,R)  
\] 
defined by 
\[
\partial_i(\overbar{v_0\cdots
v_i}):=\sum_{j=0}^i(-1)^{j}\overbar{v_0\cdots\widehat{v_j}\cdots v_i},
\] 
where~$\widehat{v_j}$ indicates that~$v_j$ is omitted. We
have~$\partial_i\circ\partial_{i+1}=0$.  The elements
of~$\ker\partial_i$ are the~{\em $i$-cycles} and elements of~$\im\partial_i$ are
{\em $i$-boundaries}.  The~$i$-th 
{\em reduced homology} group is 
\[ 
  \tH_i(\Delta,R):=\ker\partial_i/\im\partial_{i+1}.
\]
 The ordinary homology groups~$H_i(\Delta,R)$ use the same definition, with one change:~$\partial_0$ is
taken to be the zero mapping, or equivalently,~$C_{-1}(\Delta)$ is defined to be
the trivial group.  We write simply~$\tH_i(\Delta)$ and~$H_i(\Delta)$ in the
  case~$R=\Z$.  The~$i$-th {\em reduced
Betti number} is
\[
  \tb_i(\Delta)=\rk_{\Z}\tH_i(\Delta)=\dim_{\Q}\tH_i(\Delta,\Q).
\]
Applying the functor~$\Hom(\ \cdot\ ,R)$, we get the dual mapping 
\[ 
  \partial_{i+1}^t\colon C_i(\Delta,R)\to C_{i+1}(\Delta,R) 
\] 
identifying chain modules with their duals using our fixed orderings of the
faces of~$\Delta$.

If~$\Sigma$ is a subcomplex of~$\Delta$, we assume it has the orientation
inherited from~$\Delta$ (induced by the natural ordering on~$V$) and may
write~$\partial_{\Sigma,i}$ for its~$i$-th boundary mapping.  The {\em
$i$-skeleton} of~$\Delta$, denoted~$\Skel_i(\Delta)$, is the subcomplex
consisting of all faces of~$\Delta$ of dimension~$i$ or less.  

Relative homology is mentioned in Section~\ref{sect: pseudomanifolds}.
The {\em relative chain complex} (with~$\Z$-coefficients) for a nonempty
subcomplex~$\Sigma$ of~$\Delta$ is the complex
\[
  \cdots\to C_i(\Delta)/C_i(\Sigma)\xrightarrow{\overbar{\partial}_i}
  C_{i-1}(\Delta)/C_{i-1}(\Sigma)\to\cdots,
\]
where~$\overbar{\partial}_i$ is induced by~$\partial_i$.  The~{\em $i$-th relative
  homology group} is
  \[
    H_i(\Delta,\Sigma):=\ker\overbar{\partial}_i/\im\overbar{\partial}_{i+1}.
  \]
If~$\Sigma=\emptyset$, we take~$H_i(\Delta,\Sigma):=H_i(\Delta)$.

\subsection{Polyhedral cones} We recall some facts about polyhedral cones,
using~\cite{Fulton},~\cite{Henk}, and~\cite{Schrijver} as references.  Let~$Q$
be a {\em cone} in~$\R^n$.  For us, this means~$Q$ is a subset of~$\R^n$ closed
under nonnegative linear combinations: if~$x,y\in Q$
and~$\alpha,\beta\in\R_{\geq0}$, then~$\alpha x+\beta y\in Q$.  The cone~$Q$ is
{\em pointed} if~$Q\setminus\left\{ 0 \right\}$ is contained in an open
half-space in~$\R^n$, i.e., there exists~$z\in\R^n$ such that~$x\cdot z>0$ for
all~$x\in Q\setminus\left\{ 0 \right\}$ (using the ordinary dot product
on~$\R^n$).  We say~$Q$ is {\em polyhedral} if it is finitely generated, i.e.,
if there exist~$x_1,\dots,x_{\ell}\in\R^n$ such that
\[
  Q = \Span_{\R_{\geq0}}\left\{ x_1,\dots,x_{\ell} \right\}
  :=\left\{\textstyle \sum_{i=1}^{\ell}\alpha_ix_i:\alpha_i\geq0 \text{ for
  $1\leq i\leq\ell$}\right\}.
\]
If the generators~$x_1,\dots,x_{\ell}$ can be taken to be integral,
then~$Q$ is a {\em rational} polyhedral cone.  

Let~$Q$ be a rational polyhedral cone.  Then the semigroup of its integral
points,~$Q_{\Z}:=Q\cap\Z^n$, has a {\em Hilbert basis}~$\Hilb$, defined to be a set
of minimal cardinality such that every point of~$Q_{\Z}$ is a nonnegative integral
combination of elements of~$\Hilb$. If~$Q$ is
pointed, then~$\Hilb$ is unique, determined by the property that~$x\in \Hilb$ if and
only if~$x\in Q_{\Z}\setminus\left\{ 0 \right\}$ and there do not exist~$y,z\in
Q_{\Z}\setminus\left\{ 0 \right\}$ such that~$x=y+z$.  If~$Q$ is integrally generated
by $x_1,\dots,x_{\ell}$, let
\[
  \Pi :=
  \Pi(x_1,\dots,x_{\ell}):=\left\{\textstyle\sum_{i=1}^{\ell}\alpha_ix_i:0\leq\alpha_i<1
  \text{ for~$1\leq i\leq\ell$}\right\}\subset\R^{n}
\]
be the corresponding {\em fundamental parallelepiped}.  Then
\[
  \Hilb\subset \left\{ x_1,\dots,x_{\ell} \right\}\cup\Pi.
\]

The {\em dual} of~$Q$ is the rational polyhedral cone
\[
  Q^*:=\{x\in\R^n: x\cdot q\geq 0 \text{ for all $q\in Q$}\},
\]
and we have ~$(Q^*)^*=Q$.  The {\em Minkowski sum} of two rational polyhedral cones~$Q_1$ and~$Q_2$
is the rational polyhedral cone $Q_1+Q_2:=\left\{ x+y:x\in Q_1, y\in
Q_2\right\}$.
We will need the following well-known fact:
\[
  (Q_1\cap Q_2)^*= Q_1^*+Q_2^*.
\]

\subsection{Partial order}\label{sect: partial order}
Throughout this paper, fix the following ``component-wise'' partial order on
the~$i$-chains of~$\Delta$: write~$\sigma\geq\tau$ if~$\sigma(f)\geq\tau(f)$ for
all faces~$f\in\Delta_i$.  We say~$\sigma$ is {\em nonnegative}
if~$\sigma\geq0$, where~$0$ denotes the zero~$i$-chain.  Fix a similar partial
order on~$\R^k$: write~$v\geq w$ if~$v_i\geq w_i$ for all~$i$; and~$v$ is
nonnegative if~$v\geq0$, where~$0$ denotes the zero vector.

\section{The dollar game}\label{sect: the dollar game}
The~{\em
$i$-th Laplacian} of~$\Delta$, also know as the~$i$-th {\em up-down
combinatorial Laplacian}, is the mapping
  \[
    L_i:=\partial_{i+1}\circ\partial_{i+1}^t\colon C_i(\Delta)\to
    C_i(\Delta).
  \]
The isomorphism~$C_i(\Delta)\simeq\R^{f_i}$ identifies~$L_i$ with an~$f_i\times
f_i$ matrix whose rows and columns are indexed by the~$i$-faces.  

Think of~$\sigma=\sum_{f\in\Delta_i}\sigma(f)f\in C_i(\Delta)$
as a distribution of wealth to the~$i$-faces of~$\Delta$:  face~$f$
has~$\sigma(f)$ dollars, interpreted as debt if~$\sigma(f)$ is negative.  A {\em
borrowing move} at an~$i$-face~$f$ redistributes wealth by replacing~$\sigma$ by
the~$i$-chain
\[
 \sigma+L_if.
\]
A {\em lending move} at~$f$ replaces~$\sigma$ by
\[
\sigma-L_if.
\]
The goal of the {\em dollar game} for~$\sigma$ is to bring all faces out
of debt through a sequence of lending and borrowing moves.  In detail,
say~$\sigma$ is {\em linearly equivalent} to the~$i$-chain~$\sigma'$ and
write~$\sigma\sim\sigma'$ if there exists~$v\in\Z^{f_i}$ such that
\begin{equation}\label{eqn: linear equivalence}
  \sigma'=\sigma+L_iv.
\end{equation}
Call~$\sigma'$ {\em effective} if~$\sigma'\geq0$.  Then~$\sigma$ is {\em
winnable} if there exists an effective~$\sigma'$ linearly equivalent
to~$\sigma$, and {\em winning} the dollar game determined by~$\sigma$ means
finding such a~$\sigma'$.

The {\em $i$-chain class group} is
\[
  \class_i(\Delta):=C_i(\Delta)/\!\!\sim\ =\ C_i(\Delta)/\im L_i.
\]
So an~$i$-chain~$\sigma$ is winnable if and only if there is an effective chain in its
class~$[\sigma]\in\class_i(\Delta)$.
  
The image of the~$i$-th Laplacian is contained in the kernel of the~$i$-th boundary
mapping, which allows us to define the {\em $i$-th critical group} of~$\Delta$
introduced by Duval, Klivans, and Martin in~\cite{DKM1}:
\[
  \crit_i(\Delta):=\ker\partial_i/\im L_i.
\]
Choosing a splitting~$\rho\colon \im\partial_i\to C_i(\Delta)$ of the exact
sequence of free abelian groups
\[
  0\to\ker\partial_{i}\to C_i(\Delta)\to\im\partial_i\to0
\]
gives a corresponding isomorphism
\begin{align}\label{J iso}
  \class_i(\Delta)&\to \crit_i(\Delta)\oplus\im\partial_i\\\nonumber
  [\sigma]&\mapsto ([\sigma-\rho(\sigma)],\partial_i(\sigma)).
\end{align}
The torsion part of~$\class_i(\Delta)$ is thus the torsion part of the
critical group,~$\T(\crit_i(\Delta))$, (which, itself, is sometimes called the
  critical group of~$\Delta$ (e.g., in~\cite{DKM2})).
  There is a natural surjection~$\crit_i(\Delta)\to\tH_i(\Delta)$ which is an
  isomorphism when restricted to the free parts of each group
  (Corollary~\ref{cor: critical group and homology}).  
\medskip

\begin{figure}[htb] 
  \centering
  \begin{tikzpicture}[scale=0.9] 
    \def\x{6}
    \begin{scope}
      \node[draw,circle,inner sep=0.5mm] (1) at (0,-1.732) {{\small $1$}};
      \node[draw,circle,inner sep=0.5mm] (2) at (-1,0) {{\small $2$}};
      \node[draw,circle,inner sep=0.5mm] (3) at (1,0) {{\small $3$}};
      \node[draw,circle,inner sep=0.5mm] (4) at (0,1.732) {{\small $4$}};
      \draw[->-=0.55] (1)--node [left=2mm]{$-\$1$} (2);
      \draw[->-=0.55] (1)--node [right=2mm]{$\$2$} (3);
      \draw[->-=0.55] (2)--node [above=1mm]{$-\$3$} (3);
      \draw[->-=0.55] (2)--node [above left=1mm]{$\$2$} (4);
      \draw[->-=0.55] (3)--node [above right=1mm]{$-\$1$} (4);
      \draw[->] (2,0)--node [above=0.8mm]{$\overbar{13}$} (4,0);
      \draw[->] (2,0)--node [below=0.8mm]{lends} (4,0);
    \end{scope}
    \begin{scope}[xshift=\x cm]
      \node[draw,circle,inner sep=0.5mm] (1) at (0,-1.732) {{\small $1$}};
      \node[draw,circle,inner sep=0.5mm] (2) at (-1,0) {{\small $2$}};
      \node[draw,circle,inner sep=0.5mm] (3) at (1,0) {{\small $3$}};
      \node[draw,circle,inner sep=0.5mm] (4) at (0,1.732) {{\small $4$}};
      \draw[->-=0.55] (1)--node [left=2mm]{$\$0$} (2);
      \draw[->-=0.55] (1)--node [right=2mm]{$\$1$} (3);
      \draw[->-=0.55] (2)--node [above=1mm]{$-\$2$} (3);
      \draw[->-=0.55] (2)--node [above left=1mm]{$\$2$} (4);
      \draw[->-=0.55] (3)--node [above right=0.8mm]{$-\$1$} (4);
      \draw[->] (2,0)--node [above=0.8mm]{$\overbar{23}$} (4,0);
      \draw[->] (2,0)--node [below=0.8mm]{borrows} (4,0);
    \end{scope}
    \begin{scope}[xshift=2*\x cm]
      \node[draw,circle,inner sep=0.5mm] (1) at (0,-1.732) {{\small $1$}};
      \node[draw,circle,inner sep=0.5mm] (2) at (-1,0) {{\small $2$}};
      \node[draw,circle,inner sep=0.5mm] (3) at (1,0) {{\small $3$}};
      \node[draw,circle,inner sep=0.5mm] (4) at (0,1.732) {{\small $4$}};
      \draw[->-=0.55] (1)--node [left=2mm]{$\$1$} (2);
      \draw[->-=0.55] (1)--node [right=2mm]{$\$0$} (3);
      \draw[->-=0.55] (2)--node [above=1mm]{$\$0$} (3);
      \draw[->-=0.55] (2)--node [above left=1mm]{$\$1$} (4);
      \draw[->-=0.55] (3)--node [above right=0.8mm]{$\$0$} (4);
    \end{scope}
  \end{tikzpicture} 
  \caption{Winning the dollar
  game~$\sigma=-\overbar{12}+2\cdot\overbar{13}-3\cdot\overbar{23}+2\cdot\overbar{24}-\overbar{34}$
on the~$2$-dimensional simplicial complex with facets~$\overbar{123}$
and~$\overbar{234}$.}\label{fig: diamond complex} 
\end{figure}
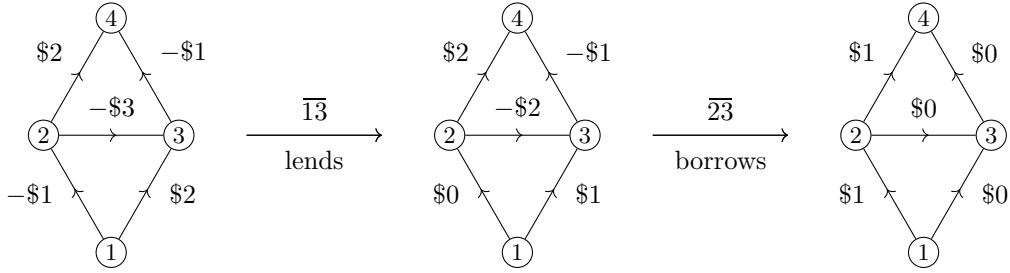

\begin{example}\label{example: simplicial dollar game} Figure~\ref{fig: diamond
  complex} illustrates an instance of the dollar game determined by
  a~$1$-chain~$\sigma$ on the simplicial complex with two
  facets:~$\overbar{123}$ and~$\overbar{234}$.  Calling the winning chain on the
  right~$\sigma'$, Equation~\eqref{eqn: linear
  equivalence} in this case takes the form
 \begin{center}
   \begin{tikzpicture}
     \def\a{-0.7}
     \def\d{0.8}
     \node at (0,0) {$\displaystyle
       \left[\begin{array}{c}
	   1\\0\\0\\1\\0
       \end{array} \right]
       =
       \left[\begin{array}{r}
	   -1\\2\\-3\\2\\-1
       \end{array} \right]
       +
      \left[\begin{array}{rrrrr}
	  1 & -1 & 1 & 0 & 0 \\
	  -1 & 1 & -1 & 0 & 0 \\
	  1 & -1 & 2 & -1 & 1 \\
	  0 & 0 & -1 & 1 & -1 \\
	  0 & 0 & 1 & -1 & 1
      \end{array}\right]
      \left[\begin{array}{r}
	 0\\-1\\1\\0\\0 
     \end{array} \right]
     $};
     \node at (\a,1.4) {$\overbar{12}$};
     \node at (\a+\d,1.4) {$\overbar{13}$};
     \node at (\a+2*\d,1.4) {$\overbar{23}$};
     \node at (\a+3*\d,1.4) {$\overbar{24}$};
     \node at (\a+4*\d,1.4) {$\overbar{34}$};
     \node at (0,-1.8) {$\sigma'=\sigma+L_1 v$.};
   \end{tikzpicture}
 \end{center}
 Note that in moving from~$\sigma$ to~$\sigma'$, money has been introduced from
 nowhere: the net amount in~$\sigma$ is~$-\$1$, while in~$\sigma'$ it is~$\$2$.
 While the simplicial dollar game does not conserve the net amount of money,
 other quantities are conserved, and we will discuss this at length starting in
 the next section.  For now, as an example, it is easy to check that the sum of
 the amount of money on just the edges~$\overbar{12}$ and~$\overbar{13}$ is
 conserved under lending and borrowing moves.  Thus, for instance, if we change
 the amount of money on~$\overbar{12}$ in~$\sigma$ from~$-\$1$ to~$-\$3$, the
 resulting game could never be won.  And that statement would continue to hold
 no matter how much money we added to the edges ~$\overbar{23}$,~$\overbar{24}$,
 and~$\overbar{34}$.
\end{example}

\begin{example}\label{example: orientation}  Here we show that winnability depends on the orientation
  of the simplicial complex.  Figure~\ref{fig: orientation matters} depicts two
  dollar games on the~$2$-simplex (the simplicial complex with the single
  facet~$\overbar{123}$).  The first can be won by lending at the
  edge~$\overbar{13}$.  The second is not winnable.  To see this, note that the
  sum of the~$\overbar{13}$ and~$\overbar{23}$ components of a~$1$-chain on
  this complex---which is~$-\$2$ for the second game---is invariant under lending
  and borrowing moves. So one of these games is winnable and the other is
  not, yet they are the same up to a relabeling of the vertices (which amounts
  to a change in orientation). 
\begin{figure}[htb] 
  \centering
  \begin{tikzpicture}[scale=0.9] 
    \begin{scope}
      \node[draw,circle,inner sep=0.5mm] (1) at (0,0) {{\small $1$}};
      \node[draw,circle,inner sep=0.5mm] (2) at (-1,1.732) {{\small $2$}};
      \node[draw,circle,inner sep=0.5mm] (3) at (1,1.732) {{\small $3$}};
      \draw[->-=0.55] (1)--node [left=2mm]{$-\$1$} (2);
      \draw[->-=0.55] (1)--node [right=2mm]{$\$1$} (3);
      \draw[->-=0.55] (2)--node [above=1mm]{$-\$1$} (3);
    \end{scope}
    \begin{scope}[xshift=6 cm]
      \node[draw,circle,inner sep=0.5mm] (1) at (0,0) {{\small $1$}};
      \node[draw,circle,inner sep=0.5mm] (2) at (-1,1.732) {{\small $2$}};
      \node[draw,circle,inner sep=0.5mm] (3) at (1,1.732) {{\small $3$}};
      \draw[->-=0.55] (1)--node [left=2mm]{$\$1$} (2);
      \draw[->-=0.55] (1)--node [right=2mm]{$-\$1$} (3);
      \draw[->-=0.55] (2)--node [above=1mm]{$-\$1$} (3);
    \end{scope}
  \end{tikzpicture} 
  \caption{Two dollar games on the edges of a~$2$-simplex.  Only the first is
  winnable.}\label{fig: orientation matters} 
\end{figure}
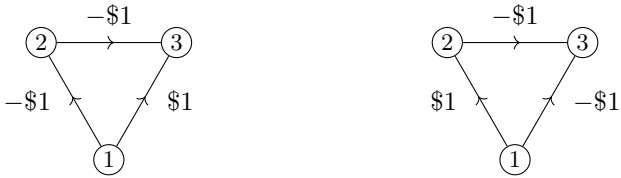
\end{example}

\begin{example}[Graphs] Let~$\Delta=G$ be a connected, undirected graph as in
  the introduction.  In that case, the dollar game for~$0$-chains on~$\Delta$ we
  just defined is the same as the dollar game for graphs from~\cite{Baker}.  If
  the vertices of~$G$ are~$v_i=i$ for~$i=1,\dots,n$, then the~$0$-th Laplacian
  is the usual discrete Laplacian for a graph:
  \[
    L_0 = \diag(\deg_G(v_1),\dots,\deg_G(v_n)) - A,
  \]
  the difference of the diagonal matrix of vertex degrees and the adjacency matrix of~$G$.
  The~$0$-chain class group and~$0$-th critical group are the Picard group and
  Jacobian group, respectively, described in the
  introduction:~$\class_0(\Delta)=\Pic(G)$ and~$\crit_0(\Delta)=\Jac(G)$.
  Isomorphism~(\ref{J iso}) specializes to the usual isomorphism~(\ref{Pic iso})
  for graphs.
\end{example}

\section{Degree}\label{sect: degree}
The naive way of generalizing the degree of a divisor on a graph to the degree
of an~$i$-chain on a simplicial complex~$\Delta$, by simply summing up the
coefficients of the $i$-faces, fails to retain many of the useful properties of
the graph-theoretic degree. Under this naive definition of degree, as shown in
Example~\ref{example: simplicial dollar game}, linearly equivalent~$i$-chains
can fail to have the same degree,~$i$-chains with negative degree can be
winnable, and for a fixed complex, there can exist~$i$-chains of arbitrarily
large degree that are unwinnable.  This section will introduce a better
generalization of degree, avoiding these problems. To summarize the rest of this
section: Theorem~\ref{thm: deg 0 iso} shows that the group of~$i$-chains of
degree zero modulo firing rules is exactly the torsion part of the~$i$-th
critical group, as it is in the usual case of connected graphs.  Our main result
is Theorem~\ref{thm: main}, which states that~$i$-chains of large enough degree
are winnable.  Unlike for graphs, it turns out that all~$i$-chains of a given
degree may be winnable even though there exists an~$i$-chain of larger degree
that is not (cf.~Example~\ref{example: counterintuitive}).  Corollary~\ref{cor:
  minimal winnable} says this will not occur if the Hilbert basis~$\Hilb_i$
  consists of~$0$-$1$ vectors.

For divisors on a graph, the degree function, $\textrm{deg}\colon\Z V\to\Z$, is
a linear function with the following two properties:
\begin{align*}
  \text{invariance under linear equivalence:}&\quad D\sim D' \Rightarrow \deg(D)
  = \deg(D'),\\
  \text{nonnegativity on effective divisors:}&\quad
E\geq 0\Rightarrow\deg(E)\geq0.
\end{align*}
To generalize the notion of degree to higher dimensions, for each~$i$, we look for a linear
function $\deg\colon C_i(\Delta)\to\Z$ with the above two properties.  Any
such linear function can be represented by $\sigma \mapsto \langle \sigma,\sigma'\rangle$
for a fixed $\sigma' \in C_i(\Delta)$, where $\langle \sigma, \sigma' \rangle := \sum_{f \in
\Delta_i} \sigma(f)\sigma'(f)$. To have invariance under linear equivalence,
$\sigma'$ must lie
in the kernel of~$L_i$.   For the function to be nonnegative on effective
chains, $\sigma'$ must itself be effective. Thus, an integer-valued linear function
has our two desired properties if and only if it is expressible as the inner
product with an effective~$i$-chain in~$\ker L_i$.  But no particular one of
these functions stands out as a preferred choice.  Instead, we will take our
generalization to contain the information of the output of all such functions,
as we now describe.

The set~$C:=\left\{ v\in\R^{f_i}:L_iv\geq0\text{ and } v\geq0
\right\}$ is a pointed, rational, polyhedral cone.  Therefore, its set of integer
points, $C\cap \Z^{f_i}$, has a unique Hilbert basis~$\Hilb$
(\cite{Hilbert}, \cite{Schrijver}).  This means that~$C\cap \Z^{f_i}$ is exactly the
set of nonnegative integer linear combinations of~$\Hilb$, and~$\Hilb$ is the smallest
subset of~$C\cap \Z^{f_i}$ with this property. We can now give our definition of degree:

\begin{definition}\label{def: degree} Let~$i\in\Z$.  The {\em $i$-th nonnegative kernel}
  for~$\Delta$ is the monoid
  \[
    \ker^{+}L_i:=\left\{ \sigma\in\ker L_i: \sigma(f)\geq0\ \text{for all
      $f\in \Delta_i$}
    \right\}.
  \]
  Fix an ordering
  \[
    \Hilb_i=\Hilb_i(\Delta)=(h_1,\dots,h_{\ell_i})
  \]
  for the elements of the Hilbert basis for $\ker^{+}L_i$.  The {\em degree}
  of~$\sigma\in C_i(\Delta)$ is 
  \[
    \deg(\sigma):= \deg_i(\sigma) := (\sigma\cdot h_1,\dots, \sigma\cdot
    h_{\ell_i})
  \]
  where~$\sigma\cdot h_j:=\sum_{f\in \Delta_i}\sigma(f)h_j(f)$.
\end{definition}

\begin{remark}
Another possible definition for the degree function is to replace~$\Hilb_i$ in the
definition with a list of only those elements of the Hilbert basis that
are rays of the cone~$L_i^+\otimes\R$.  Denoting this variant of the
definition of degree by~$\rdeg$, we have
\[
  \deg(\sigma)\geq\deg(\sigma')\ \Longleftrightarrow\
  \rdeg(\sigma)\geq\rdeg(\sigma')
\]
for~$\sigma,\sigma'\in C_i(\Delta)$.  This means that all our results relating
winnability of the dollar game to the degree of a chain will hold using either
definition.  One advantage of~$\rdeg$ over~$\deg$ is that it is easier to compute.  
\end{remark}

For each~$i$, our definition of degree is a linear function into~$\Z^{\ell_i}$,
where~$\ell_i$ is the number of elements in~$\Hilb_i$,
and satisfies the two essential properties described earlier: invariance under
linear equivalence is shown below, and nonnegativity on effective chains is
obvious.  It also specializes to the usual definition of degree in the case of a
connected graph, as the Hilbert basis in that case is the sum of all of the
vertices of the graph.

\begin{prop}\label{prop: invariance} The degree of an~$i$-chain depends only on
  its linear equivalence class. 
\end{prop}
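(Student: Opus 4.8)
The plan is to show that the degree function is constant on linear equivalence classes by checking it is unchanged by each generating move, i.e., by adding $L_i v$ for an arbitrary $v \in \Z^{f_i}$. Since $\deg(\sigma) = (\sigma \cdot h_1, \dots, \sigma \cdot h_{\ell_i})$ is linear in $\sigma$, it suffices to prove that $(L_i v) \cdot h_j = 0$ for every $j$ and every $v \in \Z^{f_i}$; then $\deg(\sigma + L_i v) = \deg(\sigma) + \deg(L_i v) = \deg(\sigma)$.

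The key computation is that each $h_j$ lies in $\ker L_i$ by the very definition of $\Hilb_i$ as the Hilbert basis of $\ker^+ L_i \subset \ker L_i$. Now $L_i = \partial_{i+1} \circ \partial_{i+1}^t$, and under our fixed identification of chain groups with their duals via the ordered face bases, $L_i$ is a symmetric matrix: $L_i^t = (\partial_{i+1} \partial_{i+1}^t)^t = \partial_{i+1} \partial_{i+1}^t = L_i$. Hence for any $v \in \Z^{f_i}$,
\[
  (L_i v) \cdot h_j = v \cdot (L_i^t h_j) = v \cdot (L_i h_j) = v \cdot 0 = 0,
\]
using $h_j \in \ker L_i$ in the penultimate step. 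This establishes the claim.

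There is essentially no obstacle here; the only thing to be careful about is the bookkeeping of the identification of $C_i(\Delta)$ with its dual, which is exactly what makes $L_i$ symmetric as a matrix and lets us move it across the inner product. One could phrase the argument slightly more invariantly by noting that the bilinear form $\ips{\cdot}{\cdot}$ is symmetric and $L_i$ is self-adjoint with respect to it (since $\ips{\partial_{i+1} x}{y} = \ips{x}{\partial_{i+1}^t y}$ by definition of the transpose), so $\ips{L_i v}{h_j} = \ips{v}{L_i h_j} = 0$. Either formulation immediately yields that $\deg$ factors through $\class_i(\Delta) = C_i(\Delta)/\im L_i$, which is the assertion of the proposition.
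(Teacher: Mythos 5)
Your proposal is correct and is essentially identical to the paper's proof: both reduce to showing $\im L_i$ has degree zero, and both use the symmetry of $L_i=\partial_{i+1}\partial_{i+1}^t$ together with $h_j\in\ker L_i$ to conclude $\langle L_i v, h_j\rangle = \langle v, L_i h_j\rangle = 0$.
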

\begin{proof} It suffices to show that every element of~$\im L_i$ has degree
  zero.  If~$\tau\in\ker L_i$ and~$\sigma\in C_i(\Delta)$, then 
  \[
    \langle \tau,L_i\sigma\rangle
    =\langle L_i^t\tau,\sigma\rangle
    =\langle L_i\tau,\sigma\rangle
    =0,
  \]
  since~$L_i$ is symmetric. In particular,~$\langle\tau,L_i\sigma\rangle=0$ for
  all~$\tau\in\ker^{+}L_i$.
\end{proof}
\begin{cor}\label{cor: winnable implies nonnegative}
  If an~$i$-chain~$\sigma$ is winnable, then~$\deg(\sigma)\geq0$.
\end{cor}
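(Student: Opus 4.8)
The plan is to chain together two facts already at hand: Proposition~\ref{prop: invariance}, which says degree is constant on linear equivalence classes, and the trivial observation that the inner product of an effective $i$-chain with any element of $\ker^{+}L_i$ is nonnegative. Concretely, suppose $\sigma$ is winnable. By definition of winnability there is an effective $i$-chain $\sigma'$ with $\sigma'\sim\sigma$. Since each Hilbert basis element $h_j\in\Hilb_i$ lies in $\ker^{+}L_i$, all of its coordinates $h_j(f)$ are nonnegative; and since $\sigma'$ is effective, all of $\sigma'(f)$ are nonnegative as well. Hence every component $\sigma'\cdot h_j=\sum_{f\in\Delta_i}\sigma'(f)h_j(f)$ is a sum of products of nonnegative integers, so $\deg(\sigma')=(\sigma'\cdot h_1,\dots,\sigma'\cdot h_{\ell_i})\geq 0$ in the component-wise order on $\Z^{\ell_i}$ fixed in Section~\ref{sect: partial order}.

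It then remains only to transfer this inequality from $\sigma'$ to $\sigma$. But $\sigma\sim\sigma'$, so by Proposition~\ref{prop: invariance} we have $\deg(\sigma)=\deg(\sigma')$, and therefore $\deg(\sigma)\geq 0$. This completes the argument.

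There is essentially no obstacle here: the corollary is an immediate formal consequence of the preceding proposition together with the defining nonnegativity of the Hilbert basis elements. If anything, the only point deserving a word of care is making explicit that $\Hilb_i\subseteq\ker^{+}L_i$ (so that each $h_j$ is itself effective), which is immediate from Definition~\ref{def: degree}. Accordingly I would keep the proof to two or three lines, merely citing Proposition~\ref{prop: invariance} and noting that effective chains pair nonnegatively with effective chains.
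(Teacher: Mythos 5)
Your argument is correct and coincides with the paper's own proof: both pass to an effective chain $\tau\sim\sigma$, invoke Proposition~\ref{prop: invariance} to transfer the degree, and use nonnegativity of the Hilbert basis coordinates to conclude $\deg(\tau)\geq0$. Nothing further is needed.
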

\begin{proof}  If~$\sigma$ is winnable, then~$\sigma\sim\tau$ for some~$\tau\geq0$.
  Then~$\deg(\sigma)=\deg(\tau)$, and since
  each element of the Hilbert basis~$\Hilb_i(\Delta)$ has nonnegative
  coefficients,~$\deg(\tau)\geq0$.
\end{proof}
\begin{remark}\label{remark: homology invariant}
  Using~\eqref{eqn: ker L_i}, below, the proof of Proposition~\ref{prop:
  invariance} is easily modified to show that every element
  of~$\im\partial_{i+1}$ has degree zero.  Thus, we get the stronger result that
  degree is a homology invariant.
\end{remark}

\begin{definition} A vector~$\delta\in\Z^{|\Hilb_i|}$ is a
  {\em realizable~$i$-degree} if there exists an~$i$-chain~$\sigma$ such
  that~$\deg(\sigma)=\delta$. 
\end{definition}

It is typically the case that not all degrees are realizable. For instance,
consider the~$3$-simplex with single facet~$\overline{1234}$.  In this case, the Hilbert
basis for~$\ker^{+}L_2$, computed by Sage~(\cite{Sage}), is 
\[
  \{\overline{123} + \overline{124}, \overline{123} +\overline{234}, \overline{134} + \overline{124}, \overline{134} + \overline{234}\}.
\]  
Ordering these elements as listed, it is easy to check that there are
no~$2$-chains of degree~$(0,0,0,1)$.

In general, the set of realizable~$i$-degrees forms an additive
monoid~$\mathcal{M}_i(\Delta)$, and Proposition~\ref{prop: invariance} says that
the~$i$-class group~$\class_i(\Delta)$ is graded by~$\mathcal{M}_i$.
Given~$\delta\in\mathcal{M}_i(\Delta)$, let~$\class_i^{\delta}(\Delta)$ denote
the~$\delta$-th graded part of~$\class_i(\Delta)$.  Then there is a faithful
action of the group~$\class_i^0(\Delta)$ on~$\class_i^{\delta}(\Delta)$ given by
addition of~$i$-chains.

\subsection{The group of chain classes of degree zero}
Our next goal is Theorem~\ref{thm: deg 0 iso}, identifying the group of
degree zero~$i$-chains modulo firing rules with the torsion part of the critical
group~$\crit_i(\Delta)$, and thus generalizing a well-known result from the
divisor theory of graphs (cf.~Example~\ref{example: graphs}).  Letting~$K =
\Z$,~$\Q$, or~$\R$, we use the standard notation $X^{\perp}=\left\{ y\in
K:x\cdot y=0 \text{ for all~$x\in X$} \right\}$ for the perpendicular space for
a subset~$X\subseteq K^n$.

By standard linear algebra,
\begin{equation}\label{eqn: ker L_i}
  \ker L_i=\ker\partial_{i+1}\partial_{i+1}^t=\ker\partial_{i+1}^t.
\end{equation}
Using the chain property of boundary maps, we identify a useful
subset of the kernel:
\[
  \im\partial_i^t\subseteq\ker\partial_{i+1}^t=\ker L_i.
\]
If~$f$ is an~$(i-1)$-face of~$\Delta$, the
element~$\partial_i^t(f)$ is called the {\em star} of~$f$; it is a signed sum of
the faces radiating from~$f$. If~$f=\overbar{v_0\cdots v_{i-1}}$,
then each element in the support of its star has the form~$\overbar{v_0\cdots
v_k v v_{k+1}\cdots v_{i-1}}$ for some vertex~$v$.
The set of stars generates~$\im\partial_i^t$.

\begin{lemma}\label{lemma: positive element} For each~$i$, there is a strictly
  positive element~$\tau\in\ker L_i$, i.e., such that~$\tau(f)>0$ for
  all~$f\in\Delta_i$.
\end{lemma}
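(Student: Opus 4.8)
The plan is to produce $\tau$ as a sum of the stars $\partial_i^t(f)$ over all $(i-1)$-faces $f$, possibly with well-chosen positive weights, and then show the result is strictly positive on every $i$-face. The inclusion $\im\partial_i^t\subseteq\ker L_i$ from \eqref{eqn: ker L_i} means any such combination automatically lies in $\ker L_i$, so the only real content is positivity. First I would unwind what the star of $f=\overbar{v_0\cdots v_{i-1}}$ looks like: its support consists of exactly those $i$-faces of $\Delta$ that contain $f$, and the coefficient of such a face $g=\overbar{v_0\cdots v_k\, v\, v_{k+1}\cdots v_{i-1}}$ is $(-1)^{k+1}$ (the sign depending only on the position into which $v$ is inserted). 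Thus a fixed $i$-face $g$ appears in the star of each of its $i+1$ facets, once each, but the signs vary. The naive choice $\tau=\sum_f \partial_i^t(f)$ could therefore have cancellation and even vanish on some $g$; for $i=1$ (graphs), one recovers $\tau=\sum_{\text{edges}}\partial_1^t(e)$ which is $L_1$ applied to nothing helpful — so the right object is genuinely a \emph{weighted} sum.

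The cleaner approach I would actually pursue is to work with \emph{absolute values of a single generic element}. Since $\ker L_i=\ker\partial_{i+1}^t$ has an integer basis, pick any $\tau_0\in\ker L_i$; I want to modify it to be strictly positive. The key structural fact to exploit is that $\ker L_i\otimes\R$ contains $\im\partial_i^t\otimes\R$, and more importantly that for \emph{each} $i$-face $g$ there is an element of $\im\partial_i^t$ that is nonzero on $g$: indeed, taking $f$ to be any facet of $g$ of dimension $i-1$, the star $\partial_i^t(f)$ has coefficient $\pm1$ on $g$. So $\ker L_i$ has no "universal zero" coordinate. The standard trick then: if a rational subspace $W\subseteq\R^{f_i}$ has the property that for every coordinate $g$ there is $w\in W$ with $w(g)\neq0$, then $W$ contains a vector that is nonzero in \emph{every} coordinate (a generic vector in $W$ avoids the finitely many coordinate hyperplanes), and hence, after clearing denominators and possibly negating coordinate-blocks — no, we cannot negate individual coordinates. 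The genuinely correct statement is subtler: a generic element of $W$ has full support, but its signs are whatever they are. To force all signs positive I need a \emph{cone}-theoretic input.

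Here is the argument I would commit to. Let $W=\ker L_i\otimes\R=\ker\partial_{i+1}^t\otimes\R$. I claim $W$ meets the open positive orthant $\R^{f_i}_{>0}$. By the duality facts recalled in the Preliminaries, $W\cap\R^{f_i}_{\geq0}=\{0\}$ would be equivalent to a separation statement: there is a linear functional strictly positive on $\R^{f_i}_{\geq0}\setminus\{0\}$ that is $\leq0$ on part of $W$ — more usefully, $W$ misses the open orthant iff $W^\perp=\im\partial_{i+1}$ (since $(\ker\partial_{i+1}^t)^\perp=\im\partial_{i+1}$) contains a nonzero nonnegative vector, i.e. iff there is a nonzero nonnegative $i$-boundary $\partial_{i+1}(c)\geq0$. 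So it suffices to show: the only nonnegative element of $\im\partial_{i+1}$ is $0$. This is a clean, standard fact — a nonnegative $i$-boundary must be zero because pairing it with... hmm, one shows $\im\partial_{i+1}\cap\R^{f_i}_{\geq0}=\{0\}$ by noting that any $z=\partial_{i+1}(c)$ satisfies $\langle z,\, w\rangle=0$ for all $w\in\ker\partial_{i+1}^t$, and since $\ker\partial_{i+1}^t\supseteq\im\partial_i^t$ contains, for each $g$, a vector nonzero at $g$... no. The right proof: if $z=\partial_{i+1}(c)\geq0$ and $z\neq0$, then $\langle z,z\rangle>0$, but also one can write $\langle z,z\rangle=\langle\partial_{i+1}(c),z\rangle$; since $z\in\ker\partial_{i+1}$ isn't automatic. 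So I abandon that and instead argue directly: take any facet $g_0$ of maximal dimension in $\mathrm{supp}(z)$; looking at $\partial_{i+1}^t$... I will instead invoke the well-known fact (it follows from $\partial_i\partial_{i+1}=0$ plus an acyclicity/sign-counting argument, or from the fact that the $i$-th chain group modulo boundaries injects into reduced homology plus torsion) that \textbf{a nonnegative integer $i$-boundary is zero}, and hence by the duality $(Q_1\cap Q_2)^*=Q_1^*+Q_2^*$ applied to $Q_1=\R^{f_i}_{\geq0}$, $Q_2=W$, deduce $W\cap\R^{f_i}_{>0}\neq\emptyset$; clearing denominators in a rational point of that intersection gives the desired $\tau\in\ker L_i$ with $\tau(f)>0$ for all $f\in\Delta_i$. \textbf{The main obstacle} is precisely justifying that $\im\partial_{i+1}$ contains no nonzero nonnegative vector — equivalently that $\ker L_i$ meets the open positive orthant; everything else is bookkeeping with the cone-duality machinery already set up in Section~\ref{sect: preliminaries}. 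I expect the authors handle this either via the observation that $\partial_{i+1}$ of a chain, summed against the all-ones functional in a signed way, must have mixed signs unless trivial, or via a direct combinatorial peeling argument on the support.
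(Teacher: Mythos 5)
Your cone-duality reduction is sound: by Stiemke's lemma (or, as you note, by applying $(Q_1\cap Q_2)^*=Q_1^*+Q_2^*$ to $Q_1=\R^{f_i}_{\geq0}$ and $Q_2=(\ker L_i)\otimes\R$, together with $(\ker\partial_{i+1}^t)^{\perp}=\im\partial_{i+1}$ over $\R$), the lemma is equivalent to the statement that $\im\partial_{i+1}\otimes\R$ contains no nonzero nonnegative vector, and a rational point of the resulting open intersection can be cleared to an integral $\tau$. But that equivalent statement is exactly where your proof stops: you try two arguments for it, abandon both, and then ``invoke'' it as well known while yourself labeling it the main obstacle. That is a genuine gap --- the whole combinatorial content of the lemma lives in that one claim, and it is not a citable black box in this setting (for an arbitrary orientation of the complex it can even fail, cf.\ the paper's Figure~2 discussion of orientation-dependence; it holds here only because of the standard lexicographic orientation).

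The missing step is true and has a short proof in the spirit you gestured at (``combinatorial peeling on the support''): if $z=\sum_f a_f f\in\ker\partial_i$ is nonnegative and nonzero, let $\overbar{v_0\cdots v_i}$ be the lexicographically \emph{largest} face in its support; then every $i$-face containing $\overbar{v_1\cdots v_i}$ that lies in the support of $z$ must have its extra vertex inserted in position $0$ (anything else would be lexicographically larger), so the coefficient of $\overbar{v_1\cdots v_i}$ in $\partial_i(z)$ is a sum of the corresponding $a_f\geq0$ including $a_{\overbar{v_0\cdots v_i}}>0$, contradicting $z\in\ker\partial_i$. (The paper proves precisely this auxiliary fact later, in the proof of Proposition~\ref{prop: staco}.) The paper's own proof of Lemma~\ref{lemma: positive element} avoids the duality detour entirely: it runs an extremal argument directly in $\ker L_i$, choosing $\sigma$ whose first lexicographic failure of positivity occurs as late as possible and then adding a positive multiple of the star $\partial_i^t(\overbar{v_1\cdots v_i})$ --- whose coefficients on all lexicographically earlier faces are nonnegative --- to push the failure later, a contradiction. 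Either route works, but you must supply the lexicographic sign analysis; without it your argument is incomplete at its crux.
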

\begin{proof} For the sake of contradiction, assume no such element~$\tau$
  exists.  Then for every~$\sigma\in\ker L_i$, let~$m_{\sigma}$ denote the least
  (in lexicographic ordering)~$i$-face such that~$\sigma(m)\leq 0$.  Choose
  a~$\sigma\in\ker L_i$ with maximal~$m_{\sigma}$.
  Say~$m:=m_{\sigma}=\overbar{v_0\cdots v_i}$, and consider the star $S :=
  \partial_i^t(\overbar{v_1\cdots v_i})$. The coefficient of~$m$ in~$S$ is~$1$,
  and if~$m_0$ is an~$i$-face such that~$m_0 < m$, then~$m_0$ begins with a
  vertex~$v$ smaller than~$v_1$, meaning one of two cases occurs: either $m_ 0 =
  \overbar{vv_1\cdots v_i}$, in which case the coefficient of~$m_0$ in~$S$ is 1,
  or~$m_0$ does not contain $\overbar{v_1\cdots v_i}$ as a subface, and the
  coefficient of~$m_0$ in~$S$ is~$0$. Either way, if~$m_0<m$, then the
  coefficient of~$m_0$ in~$S$ is nonnegative.  Now consider~$\sigma':= \sigma +
  (1-\sigma(m))S$. Then~$\sigma'\in\ker L_i$, and~$\sigma'(f)>0$ for all
  faces~$f\leq m$, contradicting the maximality of~$m$. So our assumption must
  be false.
\end{proof}

The following is an immediate consequence:
\begin{cor}\label{cor: effective degree 0}
  If~$\sigma$ is an effective~$i$-chain and~$\deg(\sigma)=0$, then~$\sigma=0$. 
\end{cor}

\begin{cor}\label{cor: equal perps} For each~$i$, the~$\Z$-span of~$\ker^{+}L_i$ is~$\ker L_i$.
Hence, 
\[
  (\ker^{+}L_i)^{\perp}=(\ker L_i)^{\perp}=(\ker\partial_{i+1}^t)^{\perp}.
  \]
\end{cor}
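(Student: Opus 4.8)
The goal is Corollary~\ref{cor: equal perps}: the $\Z$-span of $\ker^+L_i$ equals $\ker L_i$, and hence the three perpendicular spaces coincide. The plan is to deduce the span statement from Lemma~\ref{lemma: positive element}, and then the perp statement is a formal consequence.

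First I would prove the span claim. Let $M$ denote the $\Z$-span of $\ker^+L_i$ inside $C_i(\Delta)=\Z^{f_i}$; clearly $M\subseteq\ker L_i$ since $\ker^+L_i\subseteq\ker L_i$ and $\ker L_i$ is a subgroup. For the reverse inclusion, take any $\sigma\in\ker L_i$. By Lemma~\ref{lemma: positive element}, there is a strictly positive $\tau\in\ker L_i$, i.e.\ $\tau(f)>0$ for every $f\in\Delta_i$. Since $\Delta_i$ is finite and $\sigma$ has bounded coefficients, for all sufficiently large integers $N$ the chain $N\tau+\sigma$ satisfies $(N\tau+\sigma)(f)=N\tau(f)+\sigma(f)>0$ for every face $f$; hence $N\tau+\sigma\in\ker^+L_i$ for such $N$. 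Likewise $N\tau\in\ker^+L_i$. Therefore $\sigma=(N\tau+\sigma)-(N\tau)$ is a difference of two elements of $\ker^+L_i$, so $\sigma\in M$. This gives $\ker L_i\subseteq M$, and hence $M=\ker L_i$.

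The perpendicularity statement then follows immediately: for any subset $X$ of $K^n$ (with $K=\Z,\Q,\R$) one has $X^{\perp}=(\Span_{\Z}X)^{\perp}$, since a vector orthogonal to every element of $X$ is orthogonal to every integer combination of them, and conversely. Applying this with $X=\ker^+L_i$ gives $(\ker^+L_i)^{\perp}=(\Span_{\Z}\ker^+L_i)^{\perp}=(\ker L_i)^{\perp}$. The final equality $(\ker L_i)^{\perp}=(\ker\partial_{i+1}^t)^{\perp}$ is just \eqref{eqn: ker L_i}, which records $\ker L_i=\ker\partial_{i+1}^t$.

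I do not anticipate a genuine obstacle here, since Lemma~\ref{lemma: positive element} does all the real work; the only point requiring a moment's care is that the translation trick needs the face set to be finite so that a single $N$ works uniformly for all faces — which holds because $\Delta$ is a finite complex. One small bookkeeping choice: to be safe one can replace ``sufficiently large $N$'' by the explicit $N=1+\max_{f}|\sigma(f)|$, but spelling that out is not necessary for the plan.
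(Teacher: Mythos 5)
Your proof is correct and follows essentially the same route as the paper: both rest on Lemma~\ref{lemma: positive element} and the trick of adding a large multiple $N\tau$ of the strictly positive kernel element to land in $\ker^{+}L_i$. The only cosmetic difference is that you write each $\sigma\in\ker L_i$ directly as a difference $(N\tau+\sigma)-N\tau$ of elements of $\ker^{+}L_i$, whereas the paper completes a primitive positive $\tau$ to a basis of $\ker L_i$ and shifts the basis; your version is, if anything, slightly more economical.
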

\begin{proof}
  Take a strictly positive element~$\tau\in\ker L_i$ that is primitive, i.e., it is not an integer multiple of any other element.  We can then
  complete~$\left\{ \tau \right\}$ to a basis~$\left\{ \tau,
  \sigma_1,\dots,\sigma_k\right\}$ for~$\ker L_i$.
  (To see this, consider the exact sequence
  \[
    0\to \Z\tau\to\Z^n\to\Z^n/\Z\tau\to0.
  \]
Since~$\Z^n/\Z\tau$ is torsion-free, the sequence splits.) Then, for
each nonzero~$N\in\Z$, the set
\[
  \{ \tau, \sigma_1+N\tau,\dots,\sigma_k+N\tau\}
\]
is still a basis for~$\ker L_i$.  By taking~$N\gg 0$, this basis will consist
solely of elements~$\ker_i^+L_i$.
\end{proof}

\begin{theorem}\label{thm: deg 0 iso} For each~$i$, the group of~$i$-chains of
  degree zero modulo firing rules is isomorphic to the torsion part of the~$i$-th
  critical group of~$\Delta$: 
\[
  (\ker L_i)^{\perp}/\im(L_i)= \T(\crit_i(\Delta)).
\]
\end{theorem}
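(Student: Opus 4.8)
The plan is to identify both sides with the torsion subgroup of $C_i(\Delta)/\im L_i$ via the short exact sequence that underlies isomorphism~\eqref{J iso}, namely $0\to\ker\partial_i\to C_i(\Delta)\to\im\partial_i\to0$. The key observation is that $\deg(\sigma)=0$ if and only if $\sigma\in(\ker^+L_i)^\perp$, and by Corollary~\ref{cor: equal perps} this equals $(\ker L_i)^\perp$. So the left-hand side is exactly the subgroup of $\class_i(\Delta)=C_i(\Delta)/\im L_i$ consisting of classes represented by a chain orthogonal to $\ker L_i$. First I would show that, over $\Q$, the subspace $(\ker L_i)^\perp\otimes\Q$ is precisely the $\Q$-span of $\im L_i$ (since $L_i$ is symmetric, $\im L_i\otimes\Q=(\ker L_i)^\perp\otimes\Q$). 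Consequently every element of $(\ker L_i)^\perp/\im L_i$ is torsion: for $\sigma\in(\ker L_i)^\perp$ there is a positive integer $N$ with $N\sigma\in\im L_i$. This gives the inclusion $(\ker L_i)^\perp/\im L_i\subseteq\T(\class_i(\Delta))$.

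For the reverse inclusion, suppose $[\sigma]\in\class_i(\Delta)$ is torsion, so $N\sigma=L_i v$ for some $v$ and some $N\geq1$. Then $N\sigma\in\im L_i\subseteq(\ker L_i)^\perp$, and since $(\ker L_i)^\perp$ is a saturated (pure) subgroup of $C_i(\Delta)$—it is the kernel of the map $C_i(\Delta)\to\Hom(\ker L_i,\Z)$ into a free abelian group, hence its quotient is torsion-free—we conclude $\sigma\in(\ker L_i)^\perp$ itself. Thus $\T(\class_i(\Delta))=(\ker L_i)^\perp/\im L_i$, which by the remark in the paragraph following~\eqref{J iso} is exactly $\T(\crit_i(\Delta))$. (Concretely: $\im L_i\subseteq\ker\partial_i$, so $\class_i(\Delta)=\crit_i(\Delta)\oplus\im\partial_i$ with $\im\partial_i$ free, forcing $\T(\class_i(\Delta))=\T(\crit_i(\Delta))$.)

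Alternatively, and perhaps more cleanly, I would argue directly that $(\ker L_i)^\perp/\im L_i$ is isomorphic to $\T(\crit_i(\Delta))$ by restricting the splitting in~\eqref{J iso}. Under the isomorphism $\class_i(\Delta)\xrightarrow{\sim}\crit_i(\Delta)\oplus\im\partial_i$ sending $[\sigma]\mapsto([\sigma-\rho(\partial_i\sigma)],\partial_i\sigma)$, the condition $\partial_i\sigma=0$ picks out $\ker\partial_i/\im L_i=\crit_i(\Delta)$. But $(\ker L_i)^\perp$ is contained in $\ker\partial_i$ (because $\im\partial_{i+1}^t=\ker L_i$ by~\eqref{eqn: ker L_i}, so $(\ker L_i)^\perp\supseteq\im\partial_{i+1}\supseteq\ldots$—wait, more directly: $(\ker L_i)^\perp=(\ker\partial_{i+1}^t)^\perp$ and $\ker\partial_i\supseteq\im\partial_{i+1}\supseteq$ need care). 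Here is the right statement: since $\im\partial_i^t\subseteq\ker\partial_{i+1}^t=\ker L_i$ (noted in the text before Lemma~\ref{lemma: positive element}), taking perpendiculars gives $(\ker L_i)^\perp\subseteq(\im\partial_i^t)^\perp=\ker\partial_i$. So the left-hand side embeds into $\crit_i(\Delta)$, and its image is torsion by the rank computation of the first paragraph; conversely every torsion class in $\crit_i(\Delta)$, being killed by some $N$, has a representative in $\im L_i\otimes\Q\cap C_i(\Delta)=(\ker L_i)^\perp$ by saturation. Matching the two gives the theorem.

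The main obstacle is making the saturation argument airtight: one must verify that $(\ker L_i)^\perp$ is a direct summand of $C_i(\Delta)$ (equivalently, $C_i(\Delta)/(\ker L_i)^\perp$ is torsion-free) and that $\im L_i$ has full rank inside it, i.e. $\rk(\im L_i)=\rk(\ker L_i)^\perp=f_i-\dim_\Q\ker L_i$, which holds because $L_i$ is symmetric so $\im L_i$ and $\ker L_i$ are orthogonal complements over $\Q$. Once these two rank/purity facts are in place, the identification $\T(\class_i(\Delta))=(\ker L_i)^\perp/\im L_i$ is formal, and the passage to $\T(\crit_i(\Delta))$ is immediate from~\eqref{J iso} since $\im\partial_i$ is free.
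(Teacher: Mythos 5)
Your proposal is correct and, in substance, is the paper's own argument: the containment $(\ker L_i)^{\perp}\subseteq(\im\partial_i^t)^{\perp}=\ker\partial_i$ obtained from $\im\partial_i^t\subseteq\ker\partial_{i+1}^t=\ker L_i$, the finiteness of the quotient from the rank equality $\rk(\im L_i)=\rk((\ker L_i)^{\perp})$ forced by symmetry of $L_i$, and the reverse inclusion from the fact that $k\langle\nu,\sigma\rangle=\langle\nu,L_i\tau\rangle=0$ forces $\langle\nu,\sigma\rangle=0$ (your saturation/purity phrasing of the same computation). Your first route through $\T(\class_i(\Delta))$ and the splitting~\eqref{J iso} is only a cosmetic repackaging, since the paper already records $\T(\class_i(\Delta))=\T(\crit_i(\Delta))$ immediately after~\eqref{J iso}.
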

\begin{proof} 
  To see that~$\im L_i\subseteq(\ker L_i)^{\perp}$, let~$\sigma\in\Z\Delta_i$
  and~$\tau\in\ker L_i=\ker\partial_{i+1}^t$.  Then
  \[
    \langle \tau,L_i\sigma\rangle
    =\langle \tau,\partial_{i+1}\partial_{i+1}^t\sigma\rangle
    =\langle \partial_{i+1}^t\tau,\partial_{i+1}^t\sigma\rangle
    =\langle 0,\partial_{i+1}^t\sigma\rangle
    =0.
  \]
  We also have~$(\im \partial_i^t)^{\perp}\subseteq\ker\partial_i$.  To see
  this, take~$\sigma\in(\im\partial_i^t)^{\perp}$ and~$\tau\in\Z\Delta_{i-1}$.
  Then
  \[
    0=\langle \sigma,\partial_i^t\tau\rangle=\langle\partial_i\sigma,\tau\rangle.
  \]
  Since~$\tau$ is arbitrary,~$\partial_i\sigma=0$.

  Next, 
  \[
    \im\partial_{i}^t\subseteq\ker\partial_{i+1}^t
    \Rightarrow
   (\ker L_i)^{\perp}=(\ker\partial_{i+1}^t)^{\perp}\subseteq(\im\partial_i^t)^{\perp}\subseteq\ker\partial_i.
  \]
  Hence,
  \[
    (\ker L_i)^{\perp}/\im L_i\subseteq\ker\partial_i/\im L_i=:\crit_i(\Delta).
  \]
  Since~$\dim_{\Q}(\ker L_i)^{\perp}=\dim_{\Q}(\im L_i)$, the group $(\ker
  L_i)^{\perp}/\im L_i$ is finite, and hence torsion.  So it is a subset
  of~$\T(\crit_i(\Delta))$.  To show the opposite inclusion,
  let~$\sigma\in\ker\partial_i$, and suppose there exists a positive
  integer~$k$ such that~$k\sigma\in\im L_i$.  Say~$k\sigma=L_i\tau$, and
  let~$\nu\in\ker L_i=\ker\partial_{i+1}^t$.  Then
  \[
    k\langle \nu,\sigma\rangle
    =\langle \nu,k\sigma\rangle
    =\langle \nu,L_i\tau\rangle
    =\langle \partial_{i+1}^t\nu,\partial_{i+1}^t\tau\rangle
    =0.
  \]
  Therefore,~$\langle\nu,\sigma\rangle=0$. So each torsion element
  of~$\crit_i(\Delta)$ is an element of~$(\ker L_i)^{\perp}/\im(L_i)$.
\end{proof}

\begin{cor}\label{cor: critical group and homology}
The natural
  surjection~$\crit_i(\Delta)\to\tH_i(\Delta)$ is an isomorphism
  when restricted to the free parts of~$\crit_i(\Delta)$ and~$\tH_i(\Delta)$
  and a surjection when restricted to the torsion parts.
\end{cor}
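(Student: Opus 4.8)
The plan is to identify the kernel of the natural surjection, show it is finite, and then deduce the statement from elementary facts about finitely generated abelian groups. The natural map is
\[
  \pi\colon\crit_i(\Delta)=\ker\partial_i/\im L_i\longrightarrow\ker\partial_i/\im\partial_{i+1}=\tH_i(\Delta),
\]
which is well defined and surjective because $\im L_i=\partial_{i+1}(\im\partial_{i+1}^t)\subseteq\im\partial_{i+1}\subseteq\ker\partial_i$; its kernel is $N:=\im\partial_{i+1}/\im L_i$.

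First I would show $N$ is finite. For $\tau\in\ker L_i=\ker\partial_{i+1}^t$ (using~\eqref{eqn: ker L_i}) and any $y\in C_{i+1}(\Delta)$ one has $\langle\tau,\partial_{i+1}y\rangle=\langle\partial_{i+1}^t\tau,y\rangle=0$, so $\im\partial_{i+1}\subseteq(\ker L_i)^{\perp}$. Hence $N$ is a subgroup of $(\ker L_i)^{\perp}/\im L_i$, which by Theorem~\ref{thm: deg 0 iso} equals $\T(\crit_i(\Delta))$ and is therefore finite (being the torsion subgroup of a finitely generated abelian group). Thus $N$ is finite.

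The remaining step is the elementary fact that a surjection $\pi\colon A\to B$ of finitely generated abelian groups with finite kernel restricts to an isomorphism on free parts and a surjection on torsion parts. For the free parts: finiteness of $\ker\pi$ forces $\rk A=\rk B$, and $\pi$ induces a surjection $A/\T(A)\to B/\T(B)$ of free abelian groups of equal finite rank, necessarily an isomorphism (it splits, so its kernel is a free group of rank zero); choosing splittings $A\cong\T(A)\oplus F_A$ and $B\cong\T(B)\oplus F_B$ identifies this with the asserted map on free parts. For the torsion parts: given $b\in\T(B)$, lift it to $a\in A$; a positive multiple of $a$ lies in the finite group $\ker\pi$, so $a$ has finite order, i.e.\ $a\in\T(A)$, showing $\T(A)\to\T(B)$ is onto.

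The corollary has no genuinely difficult step; the only point requiring attention is the finiteness of $N=\im\partial_{i+1}/\im L_i$, equivalently that $\im L_i$ is of full rank inside $\im\partial_{i+1}$. This is the same rank identity $\rk L_i=\rk\partial_{i+1}$ already in play in the proof of Theorem~\ref{thm: deg 0 iso}, so once that theorem is available the result follows with no further work.
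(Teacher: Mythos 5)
Your proposal is correct and follows essentially the same route as the paper: both identify the kernel of the natural surjection as $\im\partial_{i+1}/\im L_i$, prove it finite via the inclusions $\im L_i\subseteq\im\partial_{i+1}\subseteq(\ker L_i)^{\perp}$ together with Theorem~\ref{thm: deg 0 iso}, and then conclude by general facts about finitely generated abelian groups. Your explicit lifting argument for surjectivity on torsion parts is in fact slightly more careful than the paper's appeal to left-exactness of $\T(\,\cdot\,)$, since it makes visible where the finiteness (torsion-ness) of the kernel is used.
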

\begin{proof} Consider the exact sequence
  \[
    0\to\im\partial_{i+1}/\im L_i\to\crit_i(\Delta)\to\tH_i(\Delta)\to0.
  \]
  We have
  \[
    \im L_i\subseteq\im\partial_{i+1}\subseteq(\ker L_i)^{\perp},
  \]
  where the second inclusion follows by an argument similar to that given
  for~$\im L_i$ at the beginning of the proof of Theorem~\ref{thm: deg 0 iso}.
  From Theorem~\ref{thm: deg 0 iso}, it follows that~$\im\partial_{i+1}/\im
  L_i$ is finite.  Tensoring the sequence by~$\Q$ then gives the result about
  the free parts, and since the torsion functor~$\T(\,\cdot\,)$ is
  left-exact, there is a surjection for the torsion parts.
\end{proof}

\begin{remark}\label{remark: computing classes} Let~$\delta$ be a
  realizable~$i$-degree, and fix any~$\sigma\in C_i(\Delta)$ such
  that~$\deg(\sigma)=\delta$.  Then there is a bijection of chain class
  groups~$\class_i^0(\Delta)\to\class_i^{\delta}(\Delta)$ given
  by~$\omega\mapsto\omega+\sigma$ for each~$\omega\in\class_i^0(\Delta)$.  By
  Theorem~\ref{thm: deg 0 iso}, the group~$\class_i^0(\Delta)$ is the torsion
  part of the (finitely-generated abelian group)~$\crit_0(\Delta)$ and hence is
  finite.  Thus, there are only finitely many chains to check to determine
  whether all chains of a given degree are winnable.
\end{remark}

\begin{example}[Graphs]\label{example: graphs} Consider again how our structures
  generalize those on graphs.  In the case~$d=1$, the simplicial
  complex~$\Delta$ is determined by its~$1$-skeleton, a graph~$G$.  We have two
  notions of degree for an element~$\sigma\in C_{i}(\Delta)$: as a~$0$-chain
  on~$\Delta$, there is the degree determined by dot products with elements of
  the Hilbert basis~$\Hilb_0$; and as a divisor on a graph, there is the usual
  degree given by~$\partial_0(\sigma)=\sum_{v\in V}\sigma(v)$. Call the former
  the~$\Delta$-degree,~$\deg(\Delta,\sigma)$, of~$\sigma$, and call the latter
  the~$G$-degree,~$\deg(G,\sigma)$. 

  By definition, the Picard group~$\Pic(G)$ is the set of~$0$-chains modulo the image
  of~$L_0$, and hence, coincides with the~$0$-th class group~$\class_0$.
  Now,~$\Pic(G)$ is graded by~$G$-degree, and its~$G$-degree zero part is by
  definition the Jacobian group~$\Jac(G)$.  Hence,
  \[
   \Jac(G)=\crit_0(\Delta)=\ker\partial_0/\im L_0.
  \]
  On the other hand,~$\class_0(\Delta)$ is graded by~$\Delta$-degree.
  While~$\Pic(G)=\class_0(\Delta)$ as groups, in the case where~$G$ is not
  connected, their gradings differ.

  If~$G$ is connected or, equivalently,~$\tb_0(\Delta)=0$, the Hilbert
  basis~$\Hilb_0$ consists of the all-ones vector~$\vec{1}$,
  and~$\deg(\Delta,\sigma)=\sigma\cdot\vec{1}=\partial_0(\sigma)=\deg(G,\sigma)$.
  Thus,~$\Pic(G)=\class_0$ as graded groups, and~$\Jac(G)$ is the collection
  of~$\Delta$-degree zero~$1$-chains.  As is well-known, the matrix-tree theorem
  implies that~$|\Jac(G)|$ is the number of spanning trees of~$G$.  So~$\Jac(G)$
  is finite, hence torsion, in agreement with Theorem~\ref{thm: deg 0 iso}.

  Now consider the case where~$G$ is not connected.  To fix ideas,
  say~$G$ is the graph consisting of the disjoint union of two triangles, one
  with vertices~$1,2,3$ and the other with vertices~$4,5,6$.  In this case,
  \[
    \Jac(G)=\crit_0(\Delta)\simeq \Z/3\Z\oplus\Z/3\Z\oplus\Z.
  \]
  The Hilbert basis~$\Hilb_0$ consists of two elements~$h_1=(1,1,1,0,0,0)$
  and~$h_2=(0,0,0,1,1,1)$. So if~$\sigma\in C_0(\Delta)$, then
  \[\textstyle
    \deg(G,\sigma)=\sum_{i=1}^6\sigma_i\quad\text{and}\quad
    \deg(\Delta,\sigma)=(\sum_{i=1}^3\sigma_i,\sum_{i=4}^{6}\sigma_i).
  \]
  For instance, if~$\sigma=\overbar{1}-\overbar{4}=(1,0,0,-1,0,0)$,
  then~$\deg(G,\sigma)=0$ while~$\deg(\Delta,\sigma)=(1,-1)\neq(0,0)$. 
  The~$\Delta$-degree zero part of~$\class_0$ is isomorphic to the direct
  sum of two copies of the Jacobian group of a triangle, i.e.,
  to~$\Z/3\Z\oplus\Z/3\Z$.
\end{example}

\subsection{Degree/winnability condition}
We now show that if the degree of an~$i$-chain is sufficiently large, it is
winnable.  The proof requires the following lemma:

\begin{lemma}\label{lemma: main}
  For each integer~$i$, there exists a finite set of~$i$-chains $\mathcal{P}_i$
  such that any~$\sigma\in C_i(\Delta)$ with $\deg(\sigma) \geq 0$ can be written as
  $\sigma = \zeta + \tau + \phi$ where $\deg(\zeta) = 0$, $\tau$ is effective,
  and $\phi\in \mathcal{P}_i$.
\end{lemma}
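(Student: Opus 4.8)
The plan is to decompose $C_i(\Delta)$ using the cone structure underlying the degree map. Write $f = f_i$, work inside $C_i(\Delta) \simeq \Z^f$, and recall that $\deg(\sigma) \geq 0$ means exactly $\sigma \cdot h_j \geq 0$ for every $h_j$ in the Hilbert basis $\Hilb_i$, equivalently $\sigma \in (\ker^+ L_i)^*$, the dual cone of $\ker^+ L_i$. Since $\Span_{\Z}(\ker^+ L_i) = \ker L_i$ by Corollary~\ref{cor: equal perps}, this dual cone is $(\ker L_i)^* = (\ker L_i)^\perp + K^*$, where I also intersect with nothing further; more precisely the nonnegative-degree chains form the rational polyhedral cone $\cut := \{x \in \R^f : x \cdot h_j \geq 0 \text{ for all } j\}$, and integer points with nonnegative degree are $\cut \cap \Z^f$. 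The key identity I want is a Minkowski-type splitting: $\cut = (\ker L_i \otimes \R) + \cut'$ where $\cut'$ is a pointed subcone, together with the observation that $\R^f_{\geq 0} \subseteq \cut$ (every effective chain has nonnegative degree, which is Corollary~\ref{cor: winnable implies nonnegative}'s hypothesis direction, or rather just the obvious nonnegativity).

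Concretely, first I would show that $\cut = W + \R^f_{\geq 0}$ where $W := \ker L_i \otimes \R = (\ker L_i)^\perp{}^\perp$ viewed as a linear subspace. The inclusion $\supseteq$ is clear since $W \subseteq \cut$ (elements of $\ker L_i$ have degree zero by Proposition~\ref{prop: invariance}) and $\R^f_{\geq 0} \subseteq \cut$; the inclusion $\subseteq$ is the substantive point and follows because $\cut^* = \cut'^*$... — cleaner: use that $\cut = (\ker^+ L_i)^* $ and dualize $\ker^+ L_i = \ker L_i \cap \R^f_{\geq 0}$ (true since a nonnegative element of $\ker L_i$ is exactly an element of $\ker^+ L_i$), giving $\cut = (\ker L_i \cap \R^f_{\geq 0})^* = (\ker L_i)^* + (\R^f_{\geq 0})^* = W + \R^f_{\geq 0}$ by the Minkowski sum/intersection duality fact recalled in the Preliminaries (and $(\ker L_i)^* = W$ since $\ker L_i$ is a subspace). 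Then I pass to integer points: by standard integral Carathéodory / the theory of the fundamental parallelepiped, $\cut \cap \Z^f = (\ker L_i) + (\R^f_{\geq 0} \cap \Z^f) + \mathcal{P}_i$ for a finite set $\mathcal{P}_i$ of integer points lying in a bounded region — namely, choosing integral generators for $W$ (a lattice basis of $\ker L_i$) and the standard generators $e_1, \dots, e_f$ of $\R^f_{\geq 0}$, every integer point of the cone $W' + \R^f_{\geq 0}$ (with $W'$ the nonnegative span of $\pm$ a lattice basis of $\ker L_i$) is a nonnegative integer combination of these generators plus an element of the fundamental parallelepiped $\Pi$ of the combined generating set, and $\Pi \cap \Z^f$ is finite; set $\mathcal{P}_i := \Pi \cap \Z^f$. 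This yields $\sigma = \zeta_0 + \tau + \phi$ with $\zeta_0 \in \Span_{\Z_{\geq 0}}(\pm\text{basis of }\ker L_i) \subseteq \ker L_i$, $\tau \in \Z^f_{\geq 0}$ effective, and $\phi \in \mathcal{P}_i$; since $\zeta_0 \in \ker L_i$ we have $\deg(\zeta_0) = 0$, so taking $\zeta := \zeta_0$ finishes it.

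The main obstacle is the passage from the real Minkowski decomposition $\cut = W + \R^f_{\geq 0}$ to an \emph{integral} decomposition with a \emph{finite} correction set, i.e. making sure the "remainder" after subtracting an integer element of $\ker L_i$ and an effective integer chain ranges over only finitely many chains. This is exactly an integral-Carathéodory statement, and the clean way to get it is to present $\cut$ (or a cone containing all nonnegative-degree integer points whose integer points we control) as integrally generated by a finite set $G = \{\pm b_1, \dots, \pm b_r\} \cup \{e_1, \dots, e_f\}$ — where $b_1,\dots,b_r$ is a $\Z$-basis of $\ker L_i$ — and invoke the fundamental-parallelepiped fact from the Preliminaries: every element of $\cut \cap \Z^f$ equals a nonnegative integer combination of $G$ plus an element of $\Pi(G) \cap \Z^f$, which is finite because $\Pi(G)$ is bounded. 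One has to be slightly careful that $\Span_{\Z_{\geq 0}} G = \cut \cap \Z^f$ rather than merely $\subseteq$; but $\supseteq$ is what we used above ($\cut = W + \R^f_{\geq 0}$ together with the fact that the lattice generated by the $b_j$ is all of $\ker L_i$), and $\subseteq$ is immediate. Everything else — that $\deg$ vanishes on $\ker L_i$, that effective chains have degree $\geq 0$, that $\ker L_i \cap \R^f_{\geq 0} = \ker^+ L_i$ — is routine given the earlier results, so I would keep those to a line each.
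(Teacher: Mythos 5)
Your overall architecture is the same as the paper's: express the condition $\deg(\sigma)\geq 0$ as membership in the dual cone of $\ker^{+}L_i$, convert the dual of the intersection $\ker L_i^{\R}\cap\R^{f_i}_{\geq0}$ into a Minkowski sum of duals, and then use fundamental parallelepipeds to turn the real decomposition into an integral one with a finite remainder set $\mathcal{P}_i$. But there is a genuine error at the heart of your decomposition: you assert that $(\ker L_i)^{*}=\ker L_i\otimes\R$ ``since $\ker L_i$ is a subspace.'' For a linear subspace $V$ the dual cone is $V^{*}=\{x:x\cdot v\geq 0\text{ for all }v\in V\}=V^{\perp}$, the orthogonal complement, not $V$ itself; since $L_i$ is symmetric, $(\ker L_i^{\R})^{\perp}=\im L_i^{\R}$, which is complementary to, not equal to, $\ker L_i^{\R}$. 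With your identification the claimed equality $\{\deg\geq0\}=(\ker L_i\otimes\R)+\R^{f_i}_{\geq0}$ is false: by Lemma~\ref{lemma: positive element} the kernel contains a strictly positive vector $\tau$, so any $x$ can be written as $x=-N\tau+(x+N\tau)$ with $x+N\tau\geq0$ for $N\gg0$, and your right-hand side is all of $\R^{f_i}$, whereas the left-hand side is a proper cone whenever $\ker^{+}L_i\neq\{0\}$. The same confusion sinks your final step: $\zeta_0\in\ker L_i$ does \emph{not} imply $\deg(\zeta_0)=0$ (that same $\tau$ has every coordinate of $\deg(\tau)$ strictly positive). Proposition~\ref{prop: invariance} says that elements of $\im L_i$ have degree zero, and the full set of degree-zero chains is $(\ker^{+}L_i)^{\perp}\cap\Z^{f_i}=(\ker L_i)^{\perp}\cap\Z^{f_i}$ by Corollary~\ref{cor: equal perps} --- not $\ker L_i$.

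The repair is mechanical and lands you exactly on the paper's proof: write
\[
(\ker L_i^{\R}\cap\R^{f_i}_{\geq0})^{*}=(\ker L_i^{\R})^{*}+(\R^{f_i}_{\geq0})^{*}=(\ker L_i^{\R})^{\perp}+\R^{f_i}_{\geq0},
\]
observe that the integer points of $(\ker L_i^{\R})^{\perp}$ are precisely the chains of degree zero, and then run your fundamental-parallelepiped argument using, say, $\pm$ a lattice basis of $(\ker L_i)^{\perp}\cap\Z^{f_i}$ as integral generators of the first summand and the standard basis vectors for the orthant; the nonnegative integer combination of the former is then a genuine degree-zero chain $\zeta$, the latter gives the effective chain $\tau$, and $\phi=\sigma-\zeta-\tau$ is an integer point of a bounded region, hence ranges over a finite set. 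Everything else in your write-up (the self-duality of the orthant, the intersection/sum duality, the finiteness of $\Pi\cap\Z^{f_i}$) is fine.
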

\begin{proof}
  Having ordered~$\Delta_i$ lexicographically, we make the
  identification~$C_i(\Delta,\R)\simeq\R^{f_i}$ where~$f_i:=|C_i(\Delta)|$.
  Let~$L_i^{\R}:=L_i\otimes\R\colon \R^{f_i}\to\R^{f_i}$, and
  let~$\mathcal{O}^{+}$ be the nonnegative orthant of~$\R^{f_i}$.
  Using dual cones, the fact that~$\sigma$ has degree at least 0 can be
  expressed as follows:
\[
  \sigma\in((\ker L_i^{\R})\cap \mathcal{O}^+)^*\cap\mathbb{Z}^{f_i} 
  =
  ((\ker L_i^{\R})^* + (\mathcal{O}^+)^*)\cap \mathbb{Z}^{f_i}
  =
  ((\ker L_i^{\R})^* + \mathcal{O}^+)\cap \mathbb{Z}^{f_i}.
\]
We can split both $(\ker L_i^{\R})^*$ and $\mathcal{O}^+$ into the Minkowski sum
of the integer points they contain and their respective fundamental
parallelepipeds~$P_1$ and~$P_2$ (with respect to any choice of integral
generators), to get
\begin{align*}
  ( (\ker L_i^{\R})^* + \mathcal{O}^+)\cap \mathbb{Z}^{f_i} 
    &= 
    ( ((\ker L_i^{\R})^*\cap \mathbb{Z}^{f_i} + P_1)+
      (\mathcal{O}^+\cap\mathbb{Z}^{f_i} + P_2))\cap \mathbb{Z}^{f_i}\\
&=   
(\ker L_i^{\R})^*\cap \mathbb{Z}^{f_i}  + \mathcal{O}^+\cap\mathbb{Z}^{f_i} +
(P_1 + P_2)\cap \mathbb{Z}^{f_i}.
\end{align*}
Since~$\ker L_i^{\R}$ is a linear space, $(\ker L_i^{\R})^*=(\ker L_i^{\R})^{\perp}$.
Hence, $(\ker L_i^{\R})^*\cap \mathbb{Z}^{f_i}$ is the set of all~$i$-chains of
degree~$0$, and~$\mathcal{O}^+\cap\mathbb{Z}^{f_i}$ is the set of
effective~$i$-chains.  So letting $\mathcal{P}_i =
(P_1+P_2)\cap\mathbb{Z}^{f_i}$, which is a finite set
since~$P_1$ and~$P_2$ are bounded, completes the proof.
\end{proof}

\begin{theorem}\label{thm: main}  If the degree of a chain is sufficiently large, then it is
  winnable: for each integer~$i$ there exists a
  realizable~$i$-degree~$\delta\in\Z^{|\Hilb_i|}$ such
  that for all~$\sigma\in C_i(\Delta)$, if~$\deg(\sigma)\geq\delta$, then~$\sigma$ is
  winnable.
\end{theorem}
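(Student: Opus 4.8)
The plan is to combine Lemma~\ref{lemma: main} with the finiteness of the degree-zero chain class group (Remark~\ref{remark: computing classes}) and the existence of a strictly positive kernel element (Lemma~\ref{lemma: positive element}). By Lemma~\ref{lemma: main}, fix the finite set $\mathcal{P}_i$ of $i$-chains so that every $\sigma$ with $\deg(\sigma)\geq 0$ decomposes as $\sigma=\zeta+\tau+\phi$ with $\deg(\zeta)=0$, $\tau$ effective, and $\phi\in\mathcal{P}_i$. The degree-zero part $\zeta$ lives in $(\ker L_i)^{\perp}/\im L_i$, which by Theorem~\ref{thm: deg 0 iso} is the finite group $\T(\crit_i(\Delta))$; pick a finite set $\mathcal{Z}_i\subset C_i(\Delta)$ of representatives for its classes. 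Then every $\sigma$ with $\deg(\sigma)\geq 0$ is linearly equivalent to some $\zeta'+\tau+\phi$ with $\zeta'\in\mathcal{Z}_i$, $\tau$ effective, and $\phi\in\mathcal{P}_i$. Since $\zeta'+\phi$ ranges over the finite set $\mathcal{Q}_i:=\{\zeta'+\phi:\zeta'\in\mathcal{Z}_i,\ \phi\in\mathcal{P}_i\}$, it suffices to show that each element of $\mathcal{Q}_i$ becomes winnable after adding a sufficiently large effective chain — more precisely, that there is a single realizable degree $\delta$ such that whenever $\deg(\sigma)\geq\delta$ the remainder after subtracting the $\mathcal{Q}_i$-representative has degree $\geq 0$ and the resulting chain is effective-plus-winnable.

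The key step is to absorb each fixed chain $q\in\mathcal{Q}_i$ into a large effective chain. Take the strictly positive element $\tau_0\in\ker^{+}L_i$ from Lemma~\ref{lemma: positive element} and pick an integer $M>0$ large enough that $M\tau_0+q\geq 0$ for every $q\in\mathcal{Q}_i$ simultaneously (possible since $\mathcal{Q}_i$ is finite and $\tau_0(f)>0$ for all $f$). Because $\tau_0\in\ker L_i$, adding $M\tau_0$ does not change the degree: $\deg(M\tau_0+q)=\deg(q)$. So once $\sigma$ is written, up to linear equivalence, as $q+\tau$ with $\tau$ effective and $q\in\mathcal{Q}_i$, we have $\sigma\sim (M\tau_0+q)+\tau$, and $M\tau_0+q+\tau\geq 0$ because each summand is nonnegative. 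Hence $\sigma$ is winnable. Now set $\delta:=\max_{q\in\mathcal{Q}_i}\deg(q)$ (componentwise maximum over the finite set $\mathcal{Q}_i$, taken together with $0$); this $\delta$ is realizable since it is the componentwise max of realizable degrees and $\mathcal{M}_i(\Delta)$ is a monoid closed under the relevant operations — if necessary one checks that $\deg$ of the effective chain $M\tau_0$ plus an appropriate effective chain hits any target $\geq\delta$, which follows from effectiveness giving nonnegative degree in every coordinate and from $\Hilb_i$ spanning. For $\sigma$ with $\deg(\sigma)\geq\delta\geq 0$, Lemma~\ref{lemma: main} applies to give the decomposition, and $\deg(\tau)=\deg(\sigma)-\deg(q)\geq\delta-\deg(q)\geq 0$ so $\tau$ is genuinely an effective chain; then the argument above shows $\sigma$ is winnable.

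The main obstacle is the bookkeeping around realizability of $\delta$: one must ensure that the threshold $\delta$ chosen is itself a realizable degree, not merely an element of $\Z^{|\Hilb_i|}$, and that "$\deg(\sigma)\geq\delta$" is compatible with the Minkowski-sum decomposition of Lemma~\ref{lemma: main}. The cleanest route is to observe that adding effective chains can increase each coordinate of the degree without bound (since for each coordinate $j$ there is a face $f$ with $h_j(f)>0$, and adding $f$ raises the $j$th coordinate while keeping all coordinates nondecreasing — this uses that the $h_j$ are nonnegative), so the set of realizable degrees is upward-cofinal in the componentwise order; thus we may inflate any candidate threshold to a realizable one. The rest is a finite case-check hidden inside the finiteness of $\mathcal{Q}_i$, which is exactly what Lemma~\ref{lemma: main} and Theorem~\ref{thm: deg 0 iso} were set up to provide.
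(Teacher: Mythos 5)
There is a genuine gap, and it sits at the heart of your argument: the ``absorption'' step $\sigma\sim (M\tau_0+q)+\tau$ is not a linear equivalence. Linear equivalence means adding an element of $\im L_i$, whereas $\tau_0$ is a strictly positive element of $\ker L_i$, and $\ker L_i\cap\im L_i=0$ (a nonzero $x$ in the intersection would satisfy $x\cdot x=0$, since $\im L_i\subseteq(\ker L_i)^{\perp}$). So adding $M\tau_0$ changes the class of the chain in $\class_i(\Delta)$ and does not produce a chain equivalent to $\sigma$. Your supporting claim that $\deg(M\tau_0+q)=\deg(q)$ is also false and points to the same confusion: the chains of degree zero are those in $(\ker L_i)^{\perp}$, not those in $\ker L_i$; in fact $\deg(\tau_0)=(\tau_0\cdot h_1,\dots,\tau_0\cdot h_{\ell_i})$ is \emph{strictly positive}, because $\tau_0(f)>0$ for all $f$ and each $h_j$ is nonnegative and nonzero. (Already for a connected graph, $\tau_0$ is the all-ones vector, and adding it to a divisor raises the degree by $|V|$ and is certainly not a sequence of lending/borrowing moves.) Without this step your argument collapses: after writing $\sigma\sim q+\tau$ with $q\in\mathcal{Q}_i$ fixed and $\tau$ effective, nothing forces $q+\tau\geq 0$, since Lemma~\ref{lemma: main} gives no control over \emph{where} the effective part $\tau$ is supported --- it may vanish exactly on the faces where $q$ is negative.

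The fix is to introduce the cushion \emph{before} invoking Lemma~\ref{lemma: main}, so that it survives inside the equivalence class. Choose a single $i$-chain $\omega$ with $\omega+\gamma+\phi\geq 0$ for every representative $\gamma$ of $\T(\crit_i(\Delta))$ and every $\phi\in\mathcal{P}_i$ (possible by finiteness), and set $\delta:=\deg(\omega)$, which is then automatically realizable --- this also disposes of your realizability worries, which your componentwise maximum does not obviously satisfy. For $\deg(\sigma)\geq\delta$ one has $\deg(\sigma-\omega)\geq 0$, so Lemma~\ref{lemma: main} applies to $\sigma-\omega=\zeta+\tau+\phi$; replacing $\zeta$ by its representative $\gamma$ via Theorem~\ref{thm: deg 0 iso} gives $\sigma\sim(\omega+\gamma+\phi)+\tau\geq 0$. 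Everything in your write-up other than the absorption step (the use of Lemma~\ref{lemma: main}, the finiteness of $\T(\crit_i(\Delta))$, the passage to representatives) is correct and is exactly the right toolkit; only the mechanism for making the finite list of exceptional chains effective needs to be replaced as above.
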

\begin{proof} Let $\mathcal{S}$ be a set of representatives
  for~$\T(\crit_i(\Delta))$, and let~$\mathcal{P}_i$ be as in Lemma~\ref{lemma:
  main}.  By finiteness of~$\mathcal{S}$ and~$\mathcal{P}_i$, there exists
  an~$i$-chain $\omega$ such that the chain $\omega+\gamma+\phi$ is effective
  for all $\gamma\in \mathcal{S}$ and $\phi\in\mathcal{P}_i$.
  Set~$\delta=\deg(\omega)$, and let
  $\sigma$ be an~$i$-chain such that $\deg(\sigma) \geq \delta$. Then
  $\deg(\sigma-\omega) \geq 0$, so by Lemma~\ref{lemma: main} we can write
\[
  \sigma-\omega = \zeta+\tau+\phi
\]
where~$\deg(\zeta)=0$,~$\tau$ is effective, and~$\phi\in\mathcal{P}_i$.  Since
$\deg(\zeta)=0$, we have~$\zeta\in(\ker^{+}L_i)^{\perp}=(\ker L_i)^{\perp}$ by
Corollary~\ref{cor: equal perps}.  So by Theorem~\ref{thm: deg 0 iso}, there
exists~$\gamma\in\mathcal{S}$ such that $\zeta\sim\gamma$.  It follows
that~$\sigma$ is winnable:
\[
  \sigma\sim (\omega+\gamma+\phi)+\tau\geq0.\qedhere
\]
\end{proof}

Let~$\mathcal{W}_i$ be the set of all~$\delta$ satisfying the conditions in
Theorem~\ref{thm: main}.  Then~$\mathcal{W}_i$ is partially ordered
(\S\ref{sect: partial order}) and bounded below by~$0\in\Z^{|\Hilb_i|}$.  So it
is natural to consider its set of minimal elements,~$\min(\mathcal{W}_i)$.  To
see that~$\min(\mathcal{W}_i)$ is finite, consider the polynomial ideal
generated by the monomials~$x^{\delta}:=\prod_ix_i^{\delta_i}$ as~$\delta$
varies over~$\mathcal{W}_i$.  By the Hilbert basis theorem, this ideal is
finitely generated, and its minimal set of generators corresponds
with~$\min(\mathcal{W}_i)$.  See Example~\ref{example: minimal degrees} for the
computation of~$\min(\mathcal{W}_1)$ for a hollow tetrahedron.

Intuition coming from the dollar game on graphs may not apply to~$\mathcal{W}_i$
on a general simplicial complex.  For instance, as in Example~\ref{example:
minimal degrees}, there are typically infinitely many nonnegative realizable
degrees that are not in~$\mathcal{W}_i$.  Further, as will be demonstrated in
Example~\ref{example: counterintuitive}, it may be the case that all~$i$-chains
of a particular realizable degree~$\delta$ are winnable even though there exists
an unwinnable~$i$-chain~$\sigma$ with~$\deg(\sigma)\geq\delta$.  

To finish this section, we describe conditions under
which~$\delta\in\mathcal{W}_i$ if and only if~$\delta$ is realizable and
all~$i$-chains of degree exactly~$\delta$ are winnable.
\begin{prop}\label{prop: effectives exist} Suppose the $i$-th Hilbert
  basis~$\Hilb_i$ of~$\Delta$ consists of $0$-$1$ vectors, and let~$\sigma$ be
  an~$i$-chain such that~$\deg(\sigma)\geq0$.  Then there exists
  an effective~$i$-chain~$\tau$ (not necessarily linearly equivalent
  to~$\sigma$) such that~$\deg(\tau)=\deg(\sigma)$.
\end{prop}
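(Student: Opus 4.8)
The plan is to reduce the claim to a statement about which nonnegative degree vectors are \emph{realized} by effective chains, and then exploit the $0$-$1$ hypothesis on $\Hilb_i$ to make the combinatorics work out. First I would set $\delta := \deg(\sigma) = (\sigma\cdot h_1,\dots,\sigma\cdot h_{\ell_i}) \in \Z_{\geq0}^{\ell_i}$ and look for an effective $\tau = \sum_{f\in\Delta_i} \tau(f)\,f$ with $\tau(f)\geq 0$ and $\tau\cdot h_j = \delta_j$ for every $j$. Because each $h_j$ is a $0$-$1$ vector, $\tau\cdot h_j = \sum_{f\in\mathrm{supp}(h_j)} \tau(f)$ is just the sum of the $\tau$-values over the face-set $\mathrm{supp}(h_j)\subseteq\Delta_i$. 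So the problem becomes: given the family of subsets $\{\mathrm{supp}(h_j)\}_{j=1}^{\ell_i}$ of $\Delta_i$ and prescribed nonnegative targets $\delta_j$, find a nonnegative integer assignment $\tau$ on $\Delta_i$ whose sum over each $\mathrm{supp}(h_j)$ equals $\delta_j$.

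The key point making this feasible is Lemma~\ref{lemma: positive element}: there is a strictly positive $\eta\in\ker L_i$, and since the $\Z$-span of $\ker^{+}L_i$ equals $\ker L_i$ (Corollary~\ref{cor: equal perps}), $\eta$ can be taken to be a nonnegative integer combination of the $h_j$'s; in particular every face $f\in\Delta_i$ lies in $\mathrm{supp}(h_j)$ for at least one $j$. Thus the supports cover $\Delta_i$. I would then argue constructively: for a face $f$ belonging to $\mathrm{supp}(h_j)$, place dollars on $f$ to contribute to the $j$-th target. One clean way: for each $j$ pick a face $f_j\in\mathrm{supp}(h_j)$ that is ``private'' to $h_j$ if possible, but in general faces are shared, so instead I would process the $h_j$ greedily — maintain running deficits $c_j$, initialized to $\delta_j$; iterate over faces $f\in\Delta_i$, and for the current face set $\tau(f)$ to be large enough to zero out the deficit of some $h_j\ni f$ while not overshooting any other $h_{j'}\ni f$, i.e. $\tau(f) = \min\{c_{j'} : j'\text{ with } f\in\mathrm{supp}(h_{j'})\}$ would be too crude. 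A cleaner route is to observe that since $\sigma$ itself achieves $\deg(\sigma)=\delta$ (just not effectively), and since adding any element of $\ker^{+}L_i$ changes no $h_j$-inner-product (as $\langle h_k, h_j\rangle$ need not vanish — wait, that is false). Let me instead use: the set of realizable degrees is the monoid $\mathcal M_i(\Delta)$, and the effective chains realize the submonoid $\{(\tau\cdot h_1,\dots,\tau\cdot h_{\ell_i}) : \tau\geq 0\}$; I must show $\delta$ lies in the latter whenever $\delta\geq 0$ lies in the former.

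The main obstacle — and the step I would spend the most care on — is showing that \emph{every} nonnegative realizable $\delta$ is hit by an effective chain, not merely that \emph{some} nonnegative vectors are. The resolution should go: start from $\sigma$ with $\deg(\sigma)=\delta\geq 0$; decompose $\sigma = \sigma^{+} - \sigma^{-}$ into its positive and negative parts, both effective. Then $\delta = \deg(\sigma^{+}) - \deg(\sigma^{-})$ with both terms $\geq 0$ (since $h_j\geq 0$). Using that the $h_j$ are $0$-$1$ vectors and that every face is covered by some support, I claim one can "cancel" the debt $\deg(\sigma^{-})$ against a suitably chosen effective chain supported on the covering faces: concretely, build $\tau = \sigma^{+} + \rho$ where $\rho\geq 0$ is an effective chain chosen on the fly, face by face, so that $\tau\cdot h_j = \delta_j$. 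Because increasing $\tau(f)$ by $1$ increases $\tau\cdot h_j$ by exactly the $0$-$1$ indicator $[f\in\mathrm{supp}(h_j)]$, and because the supports cover $\Delta_i$ (so we are never stuck with a $j$ we cannot affect), an induction on $\sum_j(\text{remaining deficit})_j$ — always picking a face $f$ in the support of a $j$ with positive remaining deficit and, if necessary, first inflating a "sacrificial" coordinate to undo an earlier overshoot using a positive element of $\ker^{+}L_i$ restricted to that support — terminates. Verifying that this local process never creates an unfixable overshoot is exactly where the $0$-$1$ hypothesis is essential, and I would state and prove a small combinatorial sublemma to that effect before assembling the final argument.
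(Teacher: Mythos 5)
There is a genuine gap, and the construction you propose actually points in the wrong direction. Writing $\sigma=\sigma^{+}-\sigma^{-}$ with both parts effective, you have $\deg(\sigma^{+})=\deg(\sigma)+\deg(\sigma^{-})\geq\deg(\sigma)$ componentwise, and by Corollary~\ref{cor: effective degree 0} the inequality is strict in some coordinate whenever $\sigma^{-}\neq0$. Since every $h_j$ is nonnegative, adding an effective $\rho$ to $\sigma^{+}$ can only push each coordinate of the degree further up; so $\tau=\sigma^{+}+\rho$ with $\rho\geq0$ can never have degree exactly $\deg(\sigma)$ unless $\sigma$ was already effective. What is really needed is to \emph{remove} mass from an effective chain so as to lower its degree to exactly $\deg(\sigma)$ while staying effective, and that is precisely the content of the ``small combinatorial sublemma'' you defer and never state or prove. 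The covering property of the supports $\mathrm{supp}(h_j)$ is not enough to get it: a face typically lies in several supports carrying different remaining deficits, so every local adjustment moves several coordinates of the degree at once, and nothing in your outline controls this.

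The paper closes exactly this gap by a descent on a minimal counterexample. One uses the dual-cone identity $(\ker L_i^{\R}\cap\mathcal{O}^{+})^{*}=(\ker L_i^{\R})^{\perp}+\mathcal{O}^{+}$ (as in the proof of Lemma~\ref{lemma: main}) to write $\sigma=\nu+\tau$ over $\R$ with $\nu\perp\ker L_i$ and $\tau\geq0$ a \emph{real} effective representative satisfying $\tau\cdot h=\sigma\cdot h$ for all $h\in\Hilb_i$. Choosing a face $f'$ with $\tau(f')>0$, the $0$-$1$ hypothesis gives $(\sigma-f')\cdot h=(\tau-f')\cdot h>-1$ for every $h$, hence $\deg(\sigma-f')\geq0$ by integrality, while Lemma~\ref{lemma: positive element} guarantees the degree strictly decreases; minimality then supplies an effective $\rho$ of degree $\deg(\sigma-f')$, and $\rho+f'$ is the desired chain. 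The fractional effective representative $\tau$ --- not the integral positive part $\sigma^{+}$ --- is what makes the descent work, and your proposal never produces such an object; proving your missing sublemma would essentially force you to reconstruct this argument.
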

\begin{proof}
  Suppose the result is false, and let~$\sigma$ be a counterexample of minimal
  degree~$\deg(\sigma)\geq0$ (using the component-wise partial order defined in
  Section~\ref{sect: partial order}).  Note that~$\deg(\sigma)\neq0$.  Using notation
  for dual cones from the proof of Lemma~\ref{lemma: main}, we have
  \[
    \sigma\in(\ker L_{i}^{\R}\cap\mathcal{O}^{+})^*=(\ker
    L_{i}^{\R})^*+\mathcal{O}^{+}=(\ker L_{i}^{\R})^{\perp}+\mathcal{O}^{+}.
  \]
  The last equality follows because~$\ker L_{i}^{\R}$ is a linear space.  Therefore,
  over~$\R$, we have~$\sigma=\nu+\tau$ where~$\nu\in(\ker L_{i}^{\R})^{\perp}$
  and~$\tau=\sum_{f\in\Delta_{i}}\tau(f)f$ with~$\tau(f)\geq0$ for
  all~$f\in\Delta_{i}$.  So~$\tau\cdot h=\sigma\cdot h$ for all~$h\in\Hilb_i$, and
  since~$\deg(\sigma)\neq0$, there exists a face~$f'$ such that~$\tau(f')>0$.
  To compute the degree of the integral chain~$\sigma-f'$,
  let~$h=\sum_{f\in\Delta_{i}}h(f)f$ be an arbitrary element of~$\Hilb_{i}$.
  Since~$h(f')\in\left\{ 0,1 \right\}$, taking dot products,
\[
(\sigma-f')\cdot h=
  (\tau-f')\cdot h=\sum_{f\in\Delta_{i}}\tau(f)h(f) - h(f')
=\sum_{f\neq f'}\tau(f)h(f) +(\tau(f')-1)h(f')>-1.
\]
Since~$(\sigma-f')\cdot h\in\Z$ for all~$h\in\Hilb_{i}$, it follows
that~$\deg(\sigma-f')\geq0$.  On the other hand, by Lemma~\ref{lemma: positive
element}, there exists some~$h\in\Hilb_{i}$ such that~$h(f')>0$, and
therefore~$\deg(\sigma-f')$ is strictly smaller than~$\deg(\sigma)$.
By minimality, there exists an effective integral~$i$-chain~$\rho$
with~$\deg(\rho)=\deg(\sigma-f')$.  But then~$\rho+f'$ is an effective divisor of
degree~$\deg(\sigma)$, contradicting the fact that~$\sigma$ is a counterexample.
\end{proof}

\begin{cor}\label{cor: minimal winnable} Suppose~$\Hilb_i$ consists of~$0$-$1$
  vectors and that there exists a realizable~$i$-degree~$\delta$
  such that every~$i$-chain of degree~$\delta$ is winnable.  Then every~$i$-chain with
  degree at least~$\delta$ is winnable.
\end{cor}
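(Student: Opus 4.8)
The plan is to reduce the claim for an arbitrary chain of degree $\ge\delta$ to the assumed case of degree exactly $\delta$, by subtracting off an effective chain that absorbs the ``excess'' degree; the existence of such a chain is exactly what Proposition~\ref{prop: effectives exist} provides under the $0$-$1$ hypothesis on $\Hilb_i$.

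First, since $\delta$ is a realizable $i$-degree, I would fix an $i$-chain $\sigma_\delta$ with $\deg(\sigma_\delta)=\delta$. Now let $\sigma\in C_i(\Delta)$ be any chain with $\deg(\sigma)\ge\delta$. By linearity of the degree function (Definition~\ref{def: degree}), the chain $\sigma-\sigma_\delta$ satisfies $\deg(\sigma-\sigma_\delta)=\deg(\sigma)-\delta\ge0$. Applying Proposition~\ref{prop: effectives exist} to $\sigma-\sigma_\delta$ yields an effective $i$-chain $\mu$ with $\deg(\mu)=\deg(\sigma)-\delta$. Then $\deg(\sigma-\mu)=\deg(\sigma)-(\deg(\sigma)-\delta)=\delta$, so by hypothesis the chain $\sigma-\mu$ is winnable: there is an effective $\nu$ with $\sigma-\mu\sim\nu$. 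Adding $\mu$ to both sides (linear equivalence is translation-invariant, being the coset relation for $\im L_i$), we get $\sigma\sim\nu+\mu$. Since $\nu\ge0$ and $\mu\ge0$ in the component-wise order of \S\ref{sect: partial order}, the chain $\nu+\mu$ is effective, so $\sigma$ is winnable.

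There is essentially no obstacle here beyond correctly setting up the bookkeeping: the whole argument is a short consequence of Proposition~\ref{prop: effectives exist} together with the two formal facts that $\deg$ is $\Z$-linear and that a sum of effective chains is effective. The only point that genuinely uses the hypotheses is the invocation of Proposition~\ref{prop: effectives exist}, which is where the assumption that $\Hilb_i$ consists of $0$-$1$ vectors enters; the realizability of $\delta$ is used only to produce the reference chain $\sigma_\delta$ (alternatively, one could apply Proposition~\ref{prop: effectives exist} directly to any chain realizing $\deg(\sigma)-\delta$, which is realizable since it equals $\deg(\sigma-\sigma_\delta)$).
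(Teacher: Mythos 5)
Your proof is correct and follows essentially the same route as the paper: subtract an effective chain of degree $\deg(\sigma)-\delta$ supplied by Proposition~\ref{prop: effectives exist}, win the resulting degree-$\delta$ game by hypothesis, and add the effective chain back. Your explicit use of the reference chain $\sigma_\delta$ to justify applying Proposition~\ref{prop: effectives exist} is a small bookkeeping detail the paper leaves implicit, but the argument is the same.
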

\begin{proof}
Let~$\sigma\in C_{i}(\Delta)$ with~$\deg(\sigma)\geq\delta$. 
By Corollary~\ref{prop: effectives exist}, there exists an effective
chain~$\tau\in C_{i}(\Delta)$ of degree~$\deg(\sigma)-\delta$. Since~$\sigma-\tau$ has
degree~$\delta$, by hypothesis it is linearly equivalent to an effective
chain~$\rho$.  Therefore,~$\sigma\sim \tau+\rho\geq0$, and~$\sigma$ is winnable.
\end{proof}

\section{Pseudomanifolds}\label{sect: pseudomanifolds}
In this section we take~$\Delta$ to be a~$d$-dimensional orientable
pseudomanifold.  References for pseudomanifolds include~\cite{Massey}
and~\cite{Spanier}.
To say that~$\Delta$ is a {\em pseudomanifold} means that it is 
\begin{enumerate}
  \item {\em pure}: each facet has dimension~$d$;
  \item {\em non-branching}: each~$(d-1)$-face is a face of at most two
    facets; and
  \item {\em strongly connected}: if~$\sigma$ and~$\sigma'$ are facets, there
    exists a sequence of facets~$\sigma_0,\dots,\sigma_k$ with~$\sigma_0=\sigma$
    and~$\sigma_k=\sigma'$ such that each pair of consecutive facets~$\sigma_i$
    and~$\sigma_{i+1}$ share a~$(d-1)$-face.
\end{enumerate}
The {\em boundary}~$\partial\Delta$ of~$\Delta$ is the collection
of~$(d-1)$-faces of~$\Delta$ that are faces of exactly one facet.
Since~$\Delta$ is a pseudomanifold, it is a standard result that exactly one of
the following must hold in relative homology:
\begin{enumerate}
  \item[(i)] $H_d(\Delta,\partial\Delta)\approx\Z$ and~$H_{d-1}(\Delta,\partial\Delta)$ is torsion-free.
  \item[(ii)] $H_d(\Delta,\partial\Delta)=0$
    and~$H_{d-1}(\Delta,\partial\Delta)$ has torsion
    subgroup~$\T(H_{d-1}(\Delta,\partial\Delta))\approx\Z/2\Z$.
\end{enumerate}
In our case, we are assuming that~$\Delta$ is an {\em orientable
pseudomanifold}, which by definition means that~(i) holds.  It is then
possible to orient the facets of~$\Delta$ so that the sum of their boundaries is
supported on the boundary of~$\Delta$.  Letting~$f^{(1)},\dots,f^{(m)}\in C_d(\Delta)$
be the facets of~$\Delta$, this means that for each~$i$ we can
choose~$\gamma_i\in\left\{ \pm f^{(i)} \right\}$ and
define~$\gamma=\gamma_1+\dots+\gamma_m$ so that ~$\partial_d(\gamma)$ is
supported on~$\partial\Delta$.  (In particular, if~$\Delta$ has no boundary,
then~$\partial_d(\gamma)=0$.) We call the relative cycle~$\gamma$ a {\em
pseudomanifold orientation} for~$\Delta$. Its class~$[\gamma]\in
H_d(\Delta,\partial\Delta)$ is a choice of generator for the top relative
homology group.  Recall that the simplicial complexes studied in this paper all
come with a fixed underlying orientation as a simplicial complex, upon which the
dollar game depends.  The orientations of the facets~$\gamma_i$ need not agree
with those given by that fixed orientation.

The proof of the following is in the appendix.  It was proved in~\cite{DKM1} for the
case~$\tH_{d-1}(\Delta)=0$ and~$\partial\Delta=\emptyset$.  
\begin{prop}\label{prop: pseudomanifold critical group} Suppose~$\Delta$ is
  a~$d$-dimensional orientable pseudomanifold.  If~$\partial\Delta\neq\emptyset$, 
  \[
    \crit_{d-1}(\Delta)=\tH_{d-1}(\Delta)
  \]
  and otherwise, if~$\Delta$ has no boundary,
  \[
    \crit_{d-1}(\Delta)\simeq\left(\Z/m\Z\right)\oplus\tH_{d-1}(\Delta)
  \]
  where~$m=f_d$ is the number of facets of~$\Delta$.
\end{prop}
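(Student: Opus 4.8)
The plan is to analyze $\crit_{d-1}(\Delta)=\ker\partial_{d-1}/\im L_{d-1}$ through the short exact sequence
\[
  0\;\to\;\im\partial_d/\im L_{d-1}\;\to\;\crit_{d-1}(\Delta)\;\to\;\tH_{d-1}(\Delta)\;\to\;0
\]
(the $i=d-1$ instance of the sequence used in the proof of Corollary~\ref{cor: critical group and homology}), and to pin down the left-hand group exactly. Since $L_{d-1}=\partial_d\partial_d^t$, we have $\im L_{d-1}=\partial_d(\im\partial_d^t)$, so applying the correspondence theorem to the surjection $\partial_d\colon C_d(\Delta)\twoheadrightarrow\im\partial_d$ yields $\im\partial_d/\im L_{d-1}\cong C_d(\Delta)/(\im\partial_d^t+\ker\partial_d)$. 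Everything therefore reduces to the integral linear algebra of the single matrix $\partial_d$.

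The crucial point is that, expressed in the basis $\gamma_1,\dots,\gamma_m$ of $C_d(\Delta)$ coming from a pseudomanifold orientation, the matrix of $\partial_d$ (rows indexed by $(d-1)$-faces, columns by the $\gamma_i$) is totally unimodular. Each $(d-1)$-face lies in at most two facets (non-branching), and the defining feature of $\gamma=\sum_i\gamma_i$ is that on every interior $(d-1)$-face the two facet contributions cancel; hence each row of this matrix has a single entry $\pm1$ when the face lies in $\partial\Delta$ and two entries $+1,-1$ otherwise. That matrix is (up to transposition) a submatrix of the vertex--edge incidence matrix of a directed graph $\Gamma$ whose vertices are the facets together with one auxiliary vertex $*$ and whose edges are the $(d-1)$-faces (an interior face joins its two facets; a boundary face joins its facet to $*$), obtained by deleting the row indexed by $*$. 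Incidence matrices of directed graphs are totally unimodular (see, e.g., \cite{Schrijver}), and submatrices and transposes of totally unimodular matrices are again totally unimodular. Consequently all nonzero elementary divisors of $\partial_d$, and hence of $\partial_d^t$, equal $1$. In particular $\cok\partial_d$ is torsion-free, so its subgroup $\tH_{d-1}(\Delta)=\ker\partial_{d-1}/\im\partial_d\subseteq C_{d-1}(\Delta)/\im\partial_d$ is torsion-free; and $\partial_d^t$ has Smith normal form an identity block of size $\rk\partial_d$.

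Now the casework. A short computation using strong connectedness and orientability---any $d$-cycle has equal coefficients on two facets sharing a $(d-1)$-face, and coefficient $0$ on a facet meeting the boundary---shows $\ker\partial_d=0$ when $\partial\Delta\neq\emptyset$ and $\ker\partial_d=\Z\gamma$ when $\partial\Delta=\emptyset$. If $\partial\Delta\neq\emptyset$, then $\rk\partial_d=m$, so $\partial_d^t$ is surjective over $\Z$ and $\im\partial_d^t=C_d(\Delta)$; thus the left term of the exact sequence vanishes, $\im L_{d-1}=\im\partial_d$, and $\crit_{d-1}(\Delta)=\ker\partial_{d-1}/\im\partial_d=\tH_{d-1}(\Delta)$. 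If $\partial\Delta=\emptyset$, then $\gamma$ is primitive (all its coordinates are $\pm1$), and $\im\partial_d^t$ is a saturated sublattice of rank $m-1$ contained in the rank-$(m-1)$ saturated lattice $(\Z\gamma)^{\perp}$ (saturatedness of $\im\partial_d^t$ being torsion-freeness of $\cok\partial_d^t$), so the two coincide. The functional $x\mapsto\ips{x}{\gamma}$ then identifies $C_d(\Delta)/\im\partial_d^t$ with $\Z$ and carries $\gamma$ to $\ips{\gamma}{\gamma}=m$, whence $\im\partial_d/\im L_{d-1}\cong\Z/m\Z$ and the exact sequence becomes $0\to\Z/m\Z\to\crit_{d-1}(\Delta)\to\tH_{d-1}(\Delta)\to0$. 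Since $\tH_{d-1}(\Delta)$ is free by the previous paragraph, this sequence splits, yielding $\crit_{d-1}(\Delta)\cong(\Z/m\Z)\oplus\tH_{d-1}(\Delta)$.

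I expect the main work to be in making the total-unimodularity step airtight: translating ``orientable pseudomanifold'' precisely into the sign pattern of the incidence matrix and handling the auxiliary vertex for boundary faces. The other delicate point is the identification $\im\partial_d^t=(\Z\gamma)^{\perp}$, which is what upgrades an order count to the actual group $\Z/m\Z$; it uses both the torsion-freeness of $\cok\partial_d^t$ (from total unimodularity) and the primitivity of $\gamma$. The computation of $\ker\partial_d$ in each case is routine, but it is where the pseudomanifold hypotheses (purity, non-branching, strong connectedness, orientability) genuinely enter, so it should be written out carefully.
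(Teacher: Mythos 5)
Your proof is correct, and while it starts from the same short exact sequence $0\to\im\partial_d/\im L_{d-1}\to\crit_{d-1}(\Delta)\to\tH_{d-1}(\Delta)\to0$ as the paper, the way you compute the left-hand term is genuinely different. The paper proves a lemma describing $\im L_{d-1}$ explicitly as $\{\sum_i a_i\partial_d(\gamma_i):\sum_i a_i=0\}$, starting from the observation $L_{d-1}(\xi)=\pm(\partial_d(\gamma_i)-\partial_d(\gamma_j))$ and strong connectivity, and then characterizes linear equivalence of elements of $\im\partial_d$ by the residue of $\sum_i s_i$ modulo $m$; the splitting in the boundaryless case is obtained by citing the standard fact that $\tH_{d-1}$ of an orientable pseudomanifold is torsion-free. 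You instead dualize: you rewrite $\im\partial_d/\im L_{d-1}\cong C_d(\Delta)/(\im\partial_d^t+\ker\partial_d)$ and extract everything from the total unimodularity of $\partial_d$ viewed as (a submatrix of) the incidence matrix of the facet-adjacency digraph --- which is exactly the $\gamma$-incidence graph $\Gamma(\Delta,\gamma)$ the paper introduces later for the Hilbert basis theorem, so the underlying combinatorial input (each interior $(d-1)$-face contributing $+1$ and $-1$ to its two facets) is the same. What your route buys is self-containedness: torsion-freeness of $\tH_{d-1}(\Delta)$ and the saturation of $\im\partial_d^t$ both fall out of the TU/Smith-normal-form argument rather than being imported from pseudomanifold theory, and the identification $\im\partial_d^t=(\Z\gamma)^{\perp}$ together with the functional $x\mapsto\ips{x}{\gamma}$ gives a clean conceptual reason for the appearance of $\Z/m\Z=\Z/\ips{\gamma}{\gamma}\Z$. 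What the paper's route buys is an explicit description of the linear-equivalence classes inside $\im\partial_d$ (its Lemma), which is reusable elsewhere. The points you flag as delicate --- the sign pattern forced by the pseudomanifold orientation, the computation of $\ker\partial_d$ in each case, and the saturated-sublattice comparison --- are indeed the places that need care, but each is handled correctly in your sketch.
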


To define the degree of a~$(d-1)$-chain on a pseudomanifold~$\Delta$, we need to
compute the Hilbert basis for~$\ker^{+}L_{d-1}$.  Our
main goal for this section is a combinatorial description of this basis.  We
start by defining
the {\em $\gamma$-incidence graph}~$\Gamma=\Gamma(\Delta,\gamma)$ as a
directed graph whose vertices are the oriented facets~$\left\{ \gamma_i
\right\}$.  If~$\partial\Delta\neq\emptyset$, let~$\gamma_0:=0\in C_d(\Delta)$,
and include it, too, as a vertex of~$\Gamma$.  The edges of~$\Gamma$ are in
bijection with the codimension-one faces of~$\Delta$.  To describe them,
let~$\sigma$ be any~$(d-1)$-face and write
\[
  \partial_d^t(\sigma)=\gamma_j-\gamma_i
\]
for uniquely determined~$i$ and~$j$.  (If~$\sigma\in\partial\Delta$, then
one of~$i$ or~$j$ will be~$0$.) Let $\sigma^-:=i$ and~$\sigma^+:=j$.
The directed edge corresponding to~$\sigma$ then starts
at~$\gamma_{\sigma^-}$ and ends at~$\gamma_{\sigma^+}$. See
Figures~\ref{fig: hollow tetrahedron} and~\ref{fig: annulus} for examples.

\begin{theorem}[Hilbert basis for an orientable pseudomanifold]\label{thm:
  pseudomanifold Hilbert basis} Let~$\Delta$ be a pseudomanifold with
  pseudomanifold orientation~$\gamma$.  Then the Hilbert basis for the
  nonnegative kernel~$\ker^{+}L_{d-1}$ is the set of incidence vectors for the
  simple directed cycles of~$\Gamma(\Delta,\gamma)$.
\end{theorem}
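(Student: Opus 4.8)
The plan is to identify $\ker^{+} L_{d-1}$ with the monoid of nonnegative integer flows on the directed graph $\Gamma = \Gamma(\Delta,\gamma)$ and then invoke the classical fact that the Hilbert basis of the flow cone of a directed graph consists of the incidence vectors of its simple directed cycles. The first step is to set up the correspondence on chains. By Equation~\eqref{eqn: ker L_i}, $\ker L_{d-1} = \ker \partial_d^t$. I would define a linear map $\Phi\colon C_{d-1}(\Delta) \to C^{\mathrm{edges}}(\Gamma)$ that simply reads off, for a $(d-1)$-chain $\sigma$, the value $\sigma(\tau)$ on each codimension-one face $\tau$, placing it on the edge of $\Gamma$ indexed by $\tau$. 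Because the edges of $\Gamma$ are in bijection with the $(d-1)$-faces of $\Delta$, the map $\Phi$ is an isomorphism of free abelian groups, and it obviously preserves nonnegativity: $\sigma \geq 0$ if and only if $\Phi(\sigma) \geq 0$.

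The crux is to show that $\Phi$ carries $\ker \partial_d^t$ onto the space of circulations (conservation-law flows) on $\Gamma$. Write $\partial_d^t(\tau) = \gamma_{\tau^+} - \gamma_{\tau^-}$ as in the statement, so that for a $d$-chain $\rho = \sum_k c_k \gamma_k$ the boundary transpose acts by $\langle \partial_d^t(\tau), \rho\rangle = c_{\tau^+} - c_{\tau^-}$. Dually, $\partial_d^t\colon C_{d-1} \to C_d$ sends $\sigma$ to $\sum_\tau \sigma(\tau)\,\partial_d^t(\tau) = \sum_\tau \sigma(\tau)(\gamma_{\tau^+} - \gamma_{\tau^-})$, and the coefficient of a fixed facet $\gamma_k$ in this sum is $\sum_{\tau: \tau^+ = k}\sigma(\tau) - \sum_{\tau: \tau^- = k}\sigma(\tau)$, i.e.\ the net flux of $\Phi(\sigma)$ into the vertex $\gamma_k$ of $\Gamma$. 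Hence $\partial_d^t(\sigma) = 0$ exactly when $\Phi(\sigma)$ satisfies Kirchhoff's node law at every vertex $\gamma_1,\dots,\gamma_m$. When $\partial\Delta \neq \emptyset$ the extra vertex $\gamma_0 = 0$ is present; conservation at every $\gamma_k$ with $k \geq 1$ automatically forces conservation at $\gamma_0$ as well (the total flux over all vertices of any finite digraph is zero), so in both cases $\Phi$ identifies $\ker^{+} L_{d-1}$ with the monoid of nonnegative integer circulations on $\Gamma$.

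Given that identification, the theorem follows from the standard structure of the integer flow cone of a directed graph: the cone of nonnegative real circulations is generated by the indicator vectors of simple directed cycles, it is pointed, and its unique Hilbert basis is precisely the set of those indicator vectors (each simple cycle's incidence vector is primitive, and no nontrivial sum of two nonzero circulations is a simple-cycle vector). Transporting back through $\Phi^{-1}$, each simple directed cycle $C$ of $\Gamma$ yields the $(d-1)$-chain supported on the edges of $C$ with coefficient $1$, and these are exactly the Hilbert basis elements of $\ker^{+} L_{d-1}$. I would cite~\cite{Schrijver} for the flow-cone/Hilbert-basis fact rather than reprove it.

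The main obstacle I anticipate is bookkeeping around orientation: the edge $\tau \mapsto (\gamma_{\tau^-} \to \gamma_{\tau^+})$ is well defined only because $\gamma$ is a \emph{pseudomanifold orientation}, which guarantees that each interior $(d-1)$-face lies in exactly two facets with \emph{opposite} induced orientations relative to $\gamma$, so that $\partial_d^t(\tau)$ really is a difference $\gamma_j - \gamma_i$ of two distinct vertices (rather than a sum $\gamma_j + \gamma_i$, which is what would occur for a non-orientable pseudomanifold and which is exactly why the Hilbert basis would be different in case (ii) of the dichotomy). I would make this point explicitly: non-branching gives at most two facets per $(d-1)$-face, the boundary case $\tau \in \partial\Delta$ uses the vertex $\gamma_0 = 0$, and orientability is what makes the coefficients $+1$ and $-1$ rather than $+1$ and $+1$. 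Once that is pinned down, the rest is the routine translation described above.
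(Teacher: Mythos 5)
Your proposal is correct and follows essentially the same route as the paper's proof: both identify $\ker^{+}L_{d-1}$ with the monoid of nonnegative integer circulations on $\Gamma(\Delta,\gamma)$ via $\partial_d^t(\sigma)=\gamma_{\sigma^+}-\gamma_{\sigma^-}$ and then read off the Hilbert basis as the incidence vectors of simple directed cycles. Your explicit verification of conservation at the boundary vertex $\gamma_0$ and your remarks on where orientability enters merely fill in details the paper leaves implicit.
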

\begin{proof} Let~$\tau=\sum_{\sigma}a_{\sigma}\sigma\in C_{d-1}(\Delta)\neq0$.  Then
  $\tau\in\ker L_{d-1}=\ker\partial_d^t$ if and only if
  \[
    0=\partial_d^t(\tau)=\sum_{\sigma}a_{\sigma}(\gamma_{\sigma^+}-\gamma_{\sigma^-}).
  \]
  Requiring~$\tau\in\ker^{+}L_{d-1}$ adds the restriction
  that~$a_{\sigma}\geq0$ for all~$\sigma$, which is equivalent to
  saying that~$\tau$ is a directed cycle in~$\Gamma$.  Then~$\tau$ is simple if and
  only if it is not the sum of two other non-trivial directed cycles, which is
  exactly the requirement that~$\tau$ belong to the Hilbert basis.
\end{proof}

\begin{cor} Suppose~$\delta$ is a realizable~$(d-1)$-degree on the orientable
  pseudomanifold~$\Delta$ of dimension~$d$ and that every~$(d-1)$-chain of
  degree~$\delta$ is winnable.  Then every~$(d-1)$-chain
  with degree at least~$\delta$ is winnable.
\end{cor}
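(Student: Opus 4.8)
The plan is to reduce this corollary to Corollary~\ref{cor: minimal winnable}, whose hypothesis requires that the Hilbert basis~$\Hilb_{d-1}$ consist of $0$-$1$ vectors. So the first and only substantive step is to observe that on an orientable pseudomanifold this hypothesis is automatically satisfied. By Theorem~\ref{thm: pseudomanifold Hilbert basis}, the Hilbert basis for~$\ker^{+}L_{d-1}$ is precisely the set of incidence vectors of the simple directed cycles of the~$\gamma$-incidence graph~$\Gamma(\Delta,\gamma)$. I would spell out that the incidence vector of a simple directed cycle is, by definition, a vector indexed by the edges of~$\Gamma$ (equivalently, by the~$(d-1)$-faces of~$\Delta$) whose entry is~$1$ on edges traversed by the cycle and~$0$ otherwise; a \emph{simple} cycle uses each edge at most once, so this is a $0$-$1$ vector. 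Hence every element of~$\Hilb_{d-1}$ is a $0$-$1$ vector.

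With that in hand, the conclusion is immediate: the hypotheses of Corollary~\ref{cor: minimal winnable} are met --- $\Hilb_{d-1}$ consists of $0$-$1$ vectors, and we are given a realizable $(d-1)$-degree~$\delta$ such that every $(d-1)$-chain of degree~$\delta$ is winnable --- so Corollary~\ref{cor: minimal winnable} yields that every $(d-1)$-chain of degree at least~$\delta$ is winnable.

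I do not expect any real obstacle here; the content is entirely in having already established Theorem~\ref{thm: pseudomanifold Hilbert basis}. The one point worth stating carefully is the identification between edges of~$\Gamma$ and codimension-one faces of~$\Delta$ (already fixed in the construction of~$\Gamma$, where each~$(d-1)$-face~$\sigma$ is assigned the directed edge from~$\gamma_{\sigma^-}$ to~$\gamma_{\sigma^+}$), so that ``incidence vector of a simple directed cycle'' is unambiguously a $0$-$1$ element of~$C_{d-1}(\Delta)\simeq\Z^{f_{d-1}}$. After that observation, the proof is a single sentence invoking Corollary~\ref{cor: minimal winnable}.
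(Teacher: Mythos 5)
Your proposal is correct and is exactly the paper's argument: the paper also deduces this corollary immediately from Theorem~\ref{thm: pseudomanifold Hilbert basis} (which makes $\Hilb_{d-1}$ a set of $0$-$1$ incidence vectors of simple directed cycles) together with Corollary~\ref{cor: minimal winnable}. Your extra care in identifying edges of $\Gamma(\Delta,\gamma)$ with $(d-1)$-faces is a reasonable elaboration of what the paper leaves implicit.
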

\begin{proof}
  The result follows immediately from Theorem~\ref{thm: pseudomanifold Hilbert
  basis} and~Corollary~\ref{cor: minimal winnable}.
\end{proof}

\begin{example}\label{example: hollow tetrahedron}
  Let~$\Delta$ be the hollow tetrahedron with facets~$\overbar{123}$,
  $\overbar{124}$, $\overbar{134}$, and $\overbar{234}$.  A pseudomanifold
  orientation is given by
  \[
    \gamma = \overbar{132}+\overbar{124}+\overbar{143}+\overbar{234}
    = -\overbar{123}+\overbar{124}-\overbar{134}+\overbar{234}.
  \]
  Both~$\Delta$ and its associated ~$\gamma$-incidence graph~$\Gamma(\Delta,\gamma)$ appear in
  Figure~\ref{fig: hollow tetrahedron}.  The edges of~$\Gamma(\Delta,\gamma)$
  are labeled by the corresponding~$1$-faces of~$\Delta$.
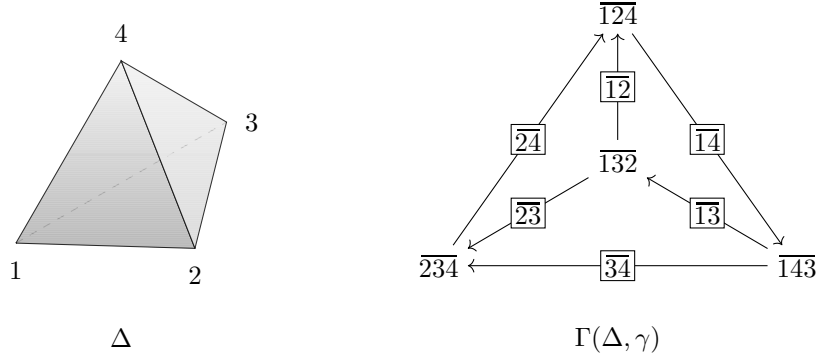
\begin{figure}[ht] 
\begin{tikzpicture} 
  \begin{scope}[yshift=-0.5cm, scale=1.4]
  \node[label={below}:$1$] (1) at (0,0) {};
  \node[label={below}:$2$] (2) at (1.7,-0.05) {};
  \node[label={right}:$3$] (3) at (2.0,1.15) {};
  \node[label={above}:$4$] (4) at (1,1.732) {};
  
  \draw[dashed] (1.center)--(3.center);
  \shadedraw[top color=gray!10, bottom color=gray!60, opacity=0.8] (1.center)--(2.center)--(4.center)--(1.center);
  \shadedraw[top color=gray!10, bottom color=gray!30, opacity=0.8] (2.center)--(3.center)--(4.center)--(2.center);
  \node at (1,-0.9) {$\Delta$};
\end{scope}
  \begin{scope}[xshift=8cm, yshift=0.6cm, scale=1.4]
    \node at (0,0) (1) {$\overbar{132}$};   
    \node at (0,1.4) (3) {$\overbar{124}$};   
    \node at (1.7,-1) (2) {$\overbar{143}$};   
    \node at (-1.7,-1) (4) {$\overbar{234}$};   
    \draw[->] (1) -- (3) node[midway,fill=white, inner sep=0.05cm, draw]
    {$\overbar{12}$}; 
    \draw[->] (3) -- (2) node[midway,fill=white, inner sep=0.05cm, draw]
    {$\overbar{14}$};
    \draw[->] (2) -- (4) node[midway,fill=white, inner sep=0.05cm, draw]
    {$\overbar{34}$};
    \draw[->] (4) -- (3) node[midway,fill=white, inner sep=0.05cm, draw]
    {$\overbar{24}$};
    \draw[->] (2) -- (1) node[midway,fill=white, inner sep=0.05cm, draw]
    {$\overbar{13}$};
    \draw[->] (1) -- (4) node[midway,fill=white, inner sep=0.05cm, draw]
    {$\overbar{23}$};
    \node at (0,-1.7) {$\Gamma(\Delta,\gamma)$};
  \end{scope}
\end{tikzpicture} 
\caption{The hollow tetrahedron and its $\gamma$-incidence graph
(cf.~Example~\ref{example: hollow tetrahedron}).}\label{fig: hollow tetrahedron} 
\end{figure}
The incidence vectors for the three simple directed cycles
of~$\Gamma(\Delta,\gamma)$, and hence the elements of the Hilbert basis
for~$\ker^{+}L_{1}$, are listed as rows in the table below:
\[
  \begin{array}{cccccc}
    \overbar{12}&\overbar{13}&\overbar{14}&\overbar{23}&\overbar{24}&\overbar{34}\\\hline
    1&1&1&0&0&0\\
    0&0&1&0&1&1\\
    0&1&1&1&1&0
  \end{array}.
\]
\end{example}

\begin{example}\label{example: annulus} Figure~\ref{fig: annulus} shows a
  triangulated annulus~$\Delta$ in the plane and its~$\gamma$-incidence graph for the
  counter-clockwise orientation,
  \[
    \gamma =
    \overbar{125}+\overbar{143}+\overbar{154}+\overbar{236}+\overbar{265}+\overbar{346}.
  \]
  The boundary
  is~$\partial\Delta=\left\{\overbar{12},\overbar{13},\overbar{23},\overbar{45},\overbar{46},
  \overbar{56}\right\}$.  Since the boundary is nonempty, the~$\gamma$-incidence
  graph includes the vertex~$*$, representing~$0\in C_{d}(\Delta)$.
  The Hilbert basis for~$\ker^{+}L_1$ has ten elements, two of which are
  displayed below:
\end{example}
\begin{figure}[ht]
\begin{center}
  \begin{tikzpicture}[scale=0.6]
    \def\myir{1.2} 
    \def\myor{3.7} 
    \begin{scope}[yshift=-1.5cm]
    \node[label={below right:$1$}] at (-30:\myor) (1) {};
    \node[label={above:$2$}] at (90:\myor) (2) {};
    \node[label={below left:$3$}] at (210:\myor) (3) {};
    \node[label={[shift={(270:-0.05)}]$4$}] at (270:\myir) (4) {};
    \node[label={[shift={(60:-0.68)}]$5$}] at (30:\myir) (5) {};
    \node[label={[shift={(120:-0.68)}]$6$}] at (150:\myir) (6) {};

    \draw (1.center)--(2.center)--(3.center)--(1.center);
    \draw (4.center)--(5.center)--(6.center)--(4.center);
    \draw[fill=gray!20,draw] (1.center)--(2.center)--(5.center);
    \draw[fill=gray!20,draw] (1.center)--(4.center)--(3.center);
    \draw[fill=gray!20,draw] (1.center)--(5.center)--(4.center);
    \draw[fill=gray!20,draw] (2.center)--(3.center)--(6.center);
    \draw[fill=gray!20,draw] (2.center)--(6.center)--(5.center);
    \draw[fill=gray!20,draw] (3.center)--(4.center)--(6.center);
    \draw (1.center)--(2.center)--(5.center);
    \draw (1.center)--(4.center)--(3.center);
    \draw (1.center)--(5.center)--(4.center);
    \draw (2.center)--(3.center)--(6.center);
    \draw (2.center)--(6.center)--(5.center);
    \draw (3.center)--(4.center)--(6.center);
    \end{scope}
    \begin{scope}[xshift=12cm]
    \node[->] at ({30+0*60}:4) (1) {$\overbar{125}$};
    \node[->] at ({30+1*60}:4) (2) {$\overbar{265}$};
    \node[->] at ({30+2*60}:4) (3) {$\overbar{236}$};
    \node[->] at ({30+3*60}:4) (4) {$\overbar{346}$};
    \node[->] at ({30+4*60}:4) (5) {$\overbar{143}$};
    \node[->] at ({30+5*60}:4) (6) {$\overbar{154}$};
      \node at (0,0) (0) {$*$};

      \draw[->] (2) -- (1) node[midway,fill=white, inner sep=0.05cm, draw]
      {$\overbar{25}$}; 
      \draw[->] (3) -- (2) node[midway,fill=white, inner sep=0.05cm, draw]
      {$\overbar{26}$}; 
      \draw[->] (4) -- (3) node[midway,fill=white, inner sep=0.05cm, draw] {$\overbar{36}$}; 
      \draw[->] (5) -- (4) node[midway,fill=white, inner sep=0.05cm, draw] {$\overbar{34}$}; 
      \draw[->] (6) -- (5) node[midway,fill=white, inner sep=0.05cm, draw]
      {$\overbar{14}$}; 
      \draw[->] (1) -- (6) node[midway,fill=white, inner sep=0.05cm, draw]
      {$\overbar{15}$}; 

      \draw[->] (0) -- (1) node[midway,fill=white, inner sep=0.05cm, draw]
      {$\overbar{12}$}; 
      \draw[->] (2)--(0) node[midway,fill=white, inner sep=0.05cm, draw]
      {$\overbar{56}$};
      \draw[->] (0)--(3) node[midway,fill=white, inner sep=0.05cm, draw]
      {$\overbar{23}$};
      \draw[->] (0)--(4) node[midway,fill=white, inner sep=0.05cm, draw]
      {$\overbar{46}$};
      \draw[->] (5)--(0) node[midway,fill=white, inner sep=0.05cm, draw]
      {$\overbar{13}$};
      \draw[->] (6)--(0) node[midway,fill=white, inner sep=0.05cm, draw]
      {$\overbar{45}$};

    \end{scope}
    \node at (0,-5.5) {$\Delta$};
    \node at (12cm,-5.5) {$\Gamma(\Delta,\gamma)$};
  \end{tikzpicture}
\end{center}
\caption{A triangulated annulus and its $\gamma$-incidence graph
  (cf.~Example~\ref{example: annulus}).}\label{fig: annulus} 
\end{figure}
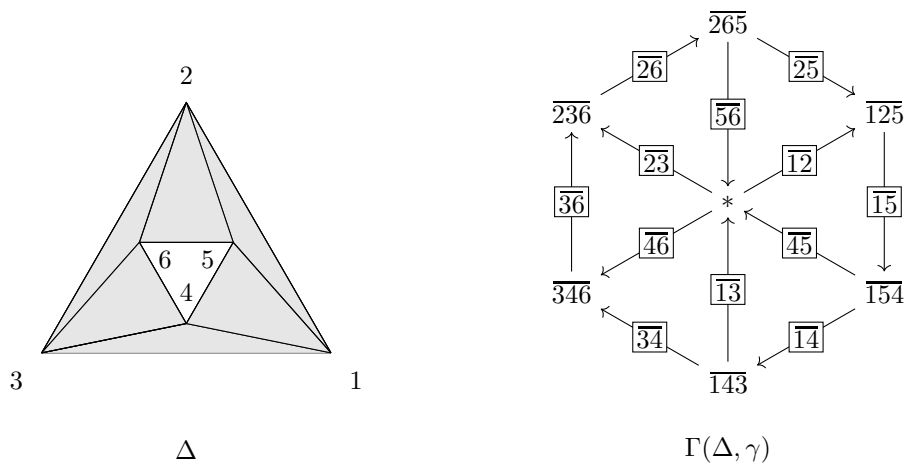
\begin{center}
\begin{tikzpicture}
  \node at (0,0) {$
  \begin{array}{cccccccccccc}
    \overbar{12}&\overbar{13}&\overbar{14}&\overbar{15}&\overbar{23}&\overbar{25}&\overbar{26}
    &\overbar{34}&\overbar{36}&\overbar{45}&\overbar{46}&\overbar{56}\\\hline
    0& 0& 1& 1& 0& 1& 1& 1& 1& 0& 0& 0\\
    0& 0& 0& 1& 1& 1& 1& 0& 0& 1& 0& 0
  \end{array}
$};
\node at (0,-1.3) {Two elements in the Hilbert basis for~$\ker^{+}L_1$.};
\end{tikzpicture}
\end{center}

\begin{example}\label{example: Klein} The condition of being orientable as a
  pseudomanifold is necessary in both Proposition~\ref{prop: pseudomanifold
  critical group} and Theorem~\ref{thm: pseudomanifold Hilbert basis}. The Klein
  bottle simplicial complex in Figure~\ref{fig: Klein} is a non-orientable
  pseudomanifold of dimension~$2$.  Computing with Sage~(\cite{Sage}), we
  find~$\crit_1(\Delta)\simeq\Z/2\Z\oplus\Z/2\Z\oplus\Z$ and that the Hilbert
  basis for~$\ker^{+}L_1$ has~14 elements.  Three of these basis elements are
  not $0$-$1$ vectors and, thus, are not incidence vectors of simple cycles in a
  directed graph.

\begin{figure}[ht] 
\centering
\begin{tikzpicture}[scale=0.9]

\draw (0,0) node[label=270:{$1$},ball] (bl1) {};
\draw (1,0) node[label=270:{$2$},ball] (b2) {};
\draw (2,0) node[label=270:{$3$},ball] (b3) {};
\draw (3,0) node[label=270:{$1$},ball] (br1) {};

\draw (0,3) node[label=90:{$1$},ball] (tl1) {};
\draw (1,3) node[label=90:{$2$},ball] (t2) {};
\draw (2,3) node[label=90:{$3$},ball] (t3) {};
\draw (3,3) node[label=90:{$1$},ball] (tr1) {};

\draw (0,2) node[label=180:{$4$},ball] (l4) {};
\draw (0,1) node[label=180:{$5$},ball] (l5) {};

\draw (3,1) node[label=0:{$4$},ball] (r4) {};
\draw (3,2) node[label=0:{$5$},ball] (r5) {};

\draw (1.5,1) node[label={[label distance=1.0mm]90:{$6$}},ball] (6) {};
\draw (1,2) node[label={[label distance=-1.4mm]135:{$7$}},ball] (7) {};
\draw (2,2) node[label={[label distance=-1.4mm]45:{$8$}},ball] (8) {};

\draw (bl1)--(b2)--(b3)--(br1);
\draw (tl1)--(t2)--(t3)--(tr1);
\draw (bl1)--(l5)--(l4)--(tl1);
\draw (br1)--(r4)--(r5)--(tr1);
\draw (b2)--(l5);
\draw (b2)--(6);
\draw (b3)--(r4);
\draw (b3)--(6);
\draw (l5)--(6);
\draw (l5)--(7);
\draw (r4)--(6);
\draw (r4)--(8);
\draw (r5)--(8);
\draw (l4)--(7);
\draw (l4)--(t2);
\draw (6)--(7);
\draw (6)--(8);
\draw (7)--(8);
\draw (7)--(t2);
\draw (t3)--(7);
\draw (t3)--(8);
\draw (t3)--(r5);
\end{tikzpicture}
\caption{Triangulation of a Klein bottle (cf.~Example~\ref{example:
Klein}).}\label{fig: Klein} 
\end{figure}
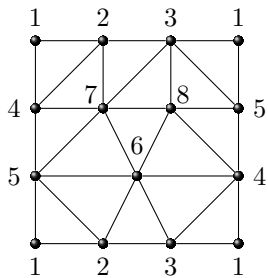
\end{example}

\begin{example}[Computing minimal winning degrees]\label{example: minimal degrees}
Let $\Delta$ be the hollow tetrahedron in Example~\ref{example: hollow
tetrahedron}, and use lexicographic ordering of the edges of~$\Delta$
to identify~$C_1(\Delta)$ with~$\Z^6$, as usual.  For the purpose of computing
degrees, we can order the elements of the Hilbert basis~$\Hilb_1$ for $\Delta$,
computed in Example~\ref{example: hollow tetrahedron}, as
\[
  h_1=(1,1,1,0,0,0),\quad h_2=(0,0,1,0,1,1),\quad h_3=(0,1,1,1,1,0).
\]
By Theorem~\ref{thm: main}, there exists an effective~$1$-chain~$\tau\in\Z^6$
such that every~$1$-chain of degree at least~$\delta:=\deg(\tau)$ is winnable.
In this example, we compute all minimal such~$\delta$ (the set
$\min(\mathcal{W}_i)$, using earlier notation).  We then exhibit an infinite
family of nonnegative realizable~$1$-degrees that are not realizable by
winnable~$1$-chains.

Choose an effective~$\tau\in C_1(\Delta)=\Z^6$ with~$\deg(\tau)= \delta$, and
suppose that every~$1$-chain of degree at least~$\delta$ is winnable.  Let
$\sigma^{(0)}, \sigma^{(1)}, \sigma^{(2)}, \sigma^{(3)}$ be representatives for
the elements of~$\crit_1(\Delta)\simeq\Z/4\Z$.  Then the equivalence classes
of~$1$-chains of degree~$\delta$ in~$\class_1:=C_1(\Delta)/\im L_1$ are
$\tau+\sigma^{(i)}$ for~$i=0,\dots,3$ (cf.~Remark~\ref{remark: computing
classes}).  Each~$\sigma^{(i)}$ has degree~$0$ by Theorem~\ref{thm: deg 0 iso}.
By assumption $\tau+\sigma^{(i)}$ is winnable, so working modulo~$\im L_1$, we
can choose the~$\sigma^{(i)}$ so that each~$\tau+\sigma^{(i)}$ is effective.  In
order to minimize~$\delta$, we minimize~$\tau$.

First, suppose $\delta_1 = 0$. Since $\tau$ is effective and $\tau\cdot
h_1=\tau_1+\tau_2+\tau_3 = \delta_1=0$, it follows that $\tau_1=\tau_2=\tau_3 =
0$.  Using this, it similarly follows that $\sigma^{(i)}_1 = \sigma^{(i)}_2 =
\sigma^{(i)}_3 = 0$ for $i=0,1,2,3$.  Some linear algebra shows
that~$\crit_1(\Delta)$ is generated by~$(0,0,0,1,-1,1)$ and~$1$-chains in the
image of the Laplacian which are 0 in the first three components are exactly
those of the form $(0,0,0,4k, -4k, 4k)$ for some integer~$k$.  So up to
re-indexing, $\sigma^{(i)} = (0,0,0, i+4k_i, -i-4k_i, i+4k_i)$ for some
integers~$k_i$.  Now consider the conditions on~$\tau$, besides~$\tau\geq0$,
required to ensure each~$\tau+\sigma^{(i)}$ is effective.  These are 
\[
\left(
\begin{array}{c}
\tau_4\\\tau_5\\\tau_6
\end{array}
\right)
\geq
\left(
\begin{array}{r}
-i\\i\\-i
\end{array}
\right)
+
k_i
\left(
\begin{array}{r}
-4\\4\\-4
\end{array}
\right)
\]
for some integer~$k_i$ and for~$i=0,\dots,3$.  For~$i=0$, we take~$k_i=0$ and
see there is no additional condition imposed on~$\tau$; for~$i=1$,
either~$\tau_5\geq1$ or both~$\tau_4$ and~$\tau_6$ are at least~$3$; for~$i=2$,
either~$\tau_5\geq2$ or both~$\tau_4$ and~$\tau_6$ are at least~$2$; and
for~$i=3$, either~$\tau_5\geq3$ or both~$\tau_4$ and~$\tau_6$ are at least~$1$.
Thus, to minimize~$\tau$, there are eight cases to consider.  In all of
these,~$\deg(\tau)\geq(0,3,3)$.


Next, suppose~$\delta_2=0$.  By a similar argument (or by symmetry, swapping
vertex~$1$ with~$4$ and vertex~$2$ with~$3$), we find minimal~$\tau$ have degree
at least~$(3,0,3)$.  Finally, suppose~$\delta_3=0$.  In that
case,~$\tau_2=\tau_3=\tau_4=\tau_5=0$
and~$\sigma_2^{(i)}=\sigma_3^{(i)}=\sigma_4^{(i)}=\sigma_5^{(i)}=0$ for all~$i$.
However, requiring a chain of the form~$(a,0,0,0,0,b)$ to represent an element
in~$\crit_1(\Delta)$---and hence be in the kernel
of~$\partial_1$---forces~$a=b=0$. That is not possible since the~$\sigma^{(i)}$
are a full set of representatives for~$\crit_1(\Delta)$. So we must
have~$\delta_3\geq 1$.

Combining the above, we conclude~$\delta$ is greater than or equal to one of
~$(0,3,3)$,~$(3,0,3)$, or~$(1,1,1)$.  In fact, these three degrees are minimal
winning degrees for $\Delta$ since there exist four effective~$1$-chains of
each degree that are pairwise not linearly equivalent.  We list these chains in
the table below:
\[
  \begin{array}{c|c}
    \text{degree~$\delta$}&\text{representatives for~$\class_1(\Delta)$}\\\hline
    (0,3,3)&(0,0,0,3,0,3),\ (0,0,0,2,1,2),\ (0,0,0,1,2,1),\ (0,0,0,0,3,0)\\
    (3,0,3)&(3,0,0,3,0,0),\ (2,1,0,2,0,0),\ (1,2,0,1,0,0),\ (0,3,0,0,0,0)\\
    (1,1,1)&(1,0,0,1,0,1),\ (1,0,0,0,1,0),\ (0,1,0,0,0,1),\ (0,0,1,0,0,0)
  \end{array}.
\]

On a graph, there are only finitely many nonnegative degrees realizable by
unwinnable divisors. That is not usually the case for a general simplicial
complex.  For instance, on our current~$\Delta$, consider the family
of~$1$-chains $\sigma=(a,-b,b,0,0,0)$ where~$a\geq0$ and~$b>0$.  We
have~$\deg(\sigma)=(a,b,0)\geq0=(0,0,0)$.  Let~$\tau$ be any effective~$1$-chain
of degree~$(a,b,0)$.  Taking the dot product of~$\tau$ with each~$h_i$, it
follows that ~$\tau=(a,0,0,0,0,b)$, and thus~$\sigma-\tau=(0,-b,b,0,0,-b)$.
However, computing the Hermite normal form for~$L_1$, we see that~$\im L_1$ is
spanned by~$(1,0,-1,3,-2,3)$, $(0,1,-1,1,-1,2)$, and $(0,0,0,4,-4,4)$.  It is
straightforward to check that~$\sigma-\tau\not\in\im L_1$, and
hence~$\sigma\not\sim\tau$.  Hence,~$\sigma$ is not winnable.
\end{example}

\section{Forests}\label{sect: forests}
It is well-known that the dollar game on a graph is winnable for all initial
configurations of degree zero if and only if the graph is a tree (e.g.,
cf.~\cite{Baker}).  In this section, that result is extended to higher
dimensions.  We first recall the basics of trees on simplicial complexes as
developed by Duval, Klivans, and Martin in \cite{DKM1} and \cite{DKM2}. In
\cite{DKM1}, it is shown that under certain circumstances, each critical group is
isomorphic to the cokernel of a certain submatrix of the corresponding Laplacian
matrix called the {\em reduced Laplacian}.  Theorem~\ref{thm: reduced Laplacian
iso} generalizes that result by loosening the hypotheses.

\begin{definition}\label{def: spanning forest} 
  A {\em spanning $i$-forest of $\Delta$} is an $i$-dimensional subcomplex
  $\Upsilon\subseteq\Delta$ with $\Skel_{i-1}(\Upsilon)=\Skel_{i-1}(\Delta)$ and
  satisfying the three conditions
  \begin{enumerate}
    \item\label{sst1}  $\tH_i(\Upsilon)=0$;
    \item\label{sst2}  $\tb_{i-1}(\Upsilon)=\tb_{i-1}(\Delta)$;
    \item\label{sst3}  $f_i(\Upsilon) = f_i(\Delta)-\tb_i(\Skel_i(\Delta))$.
  \end{enumerate}
  In the case where~$\tb_{i-1}(\Delta)=0$, a spanning ~$i$-forest is
  called a {\em spanning $i$-tree}.    The complex~$\Delta$ is a {\em forest} if
  it is a spanning forest of itself, i.e., if~$\tH_d(\Delta)=0$.  If, in
  addition,~$\tb_{d-1}(\Delta)=0$, then~$\Delta$ is a {\em tree}.
\end{definition}

\noindent{\bf Remarks.}  Let~$\Upsilon$ be an~$i$-dimensional subcomplex of~$\Delta$ sharing the
same~$(i-1)$-skeleton.
\begin{enumerate}
  \item For a graph~$G$, the above definition says that a (one-dimensional) spanning
    forest contains all of the vertices of~$G$ and: (i) has no cycles, (ii)
    has the same number of components as~$G$, and (iii) has~$m-c$ edges,
    where~$m$ is the number of edges and~$c$ is the number of components
    of~$G$.
  \item The condition~$\tH_i(\Upsilon)=0$ is equivalent to the elements of the
    set
    \[
A:=\left\{\pup{i}(f):f\in\Upsilon_i \right\}
    \]
    being linearly independent (over~$\Z$ or, equivalently, over~$\Q$).
  \item Since~$\Upsilon$ and~$\Delta$ have the same~$(i-1)$-skeleton,
    $\partial_{\Delta,i-1}=\pup{i-1}$, and
    hence,~$\tb_{i-1}(\Upsilon)=\tb_{i-1}(\Delta)$ is equivalent
    to~$\rk\im\pup{i}=\rk\im\partial_{\Delta,i}$. 
  \item It follows from the previous two remarks that~$\Upsilon$ is a
    spanning~$i$-forest if and only if~$A$, defined above, is a basis
    for~$\im\partial_{\Delta,i}$ over~$\Q$, i.e, the columns of the
    matrix~$\partial_{\Delta,i}$ corresponding to the~$i$-faces of~$\Upsilon$
    are a~$\Q$-basis for the column space of~$\partial_{\Delta,i}$.  In
    particular, spanning~$i$-forests always exist.
  \item\label{skelitem} Since~$\partial_{\Delta,j}=\partial_{\Skel_i(\Delta),j}$ for
    all~$j\leq i$, it follows the~$j$-th reduced homology groups, Betti numbers,
    and critical groups for~$\Delta$ and for~$\Skel_i(\Delta)$ are the same for
    all~$j<i$.  In particular, this implies that the~$j$-forests
    (resp.,~$j$-trees) of~$\Delta$ are the same as those for~$\Skel_i(\Delta)$
    for all~$j\leq i$. 
\end{enumerate}

\begin{prop}[{\cite[Prop~3.5]{DKM1}, \cite{DKM2}}] Any two of the three
  conditions defining a spanning~$i$-forest implies the remaining condition.
\end{prop}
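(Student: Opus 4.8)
The plan is to rephrase each of the three defining conditions of Definition~\ref{def: spanning forest} as an equality between two of the three integers
\[
  X := \rk\pup{i},\qquad Y := f_i(\Upsilon),\qquad Z := \rk\partial_{\Delta,i},
\]
and then invoke nothing more than transitivity of equality: if two of $X=Y$, $X=Z$, $Y=Z$ hold, then $X=Y=Z$, so the third holds as well. Throughout, $\Upsilon$ is an $i$-dimensional subcomplex of $\Delta$ with $\Skel_{i-1}(\Upsilon)=\Skel_{i-1}(\Delta)$, so in particular $\partial_{\Upsilon,i-1}=\partial_{\Delta,i-1}$ as maps $C_{i-1}(\Delta)\to C_{i-2}(\Delta)$.

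Two of the three translations are essentially already contained in the remarks preceding the proposition; I would make them precise as follows. Since $\Upsilon$ has no $(i+1)$-faces, $\tH_i(\Upsilon)=\ker\pup{i}$ is a subgroup of the free group $C_i(\Upsilon)$, hence torsion-free, so $\tH_i(\Upsilon)=0$ is equivalent to $\ker\pup{i}=0$, i.e.\ to $X=Y$. Next, $\tb_{i-1}(\Upsilon)=\dim_{\Q}\ker(\partial_{\Delta,i-1}\otimes\Q)-\rk\pup{i}$ while $\tb_{i-1}(\Delta)=\dim_{\Q}\ker(\partial_{\Delta,i-1}\otimes\Q)-\rk\partial_{\Delta,i}$, so $\tb_{i-1}(\Upsilon)=\tb_{i-1}(\Delta)$ is equivalent to $X=Z$. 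For the third condition the one genuine computation is the identity
\[
  \rk\partial_{\Delta,i}=f_i(\Delta)-\tb_i(\Skel_i(\Delta)),
\]
after which $f_i(\Upsilon)=f_i(\Delta)-\tb_i(\Skel_i(\Delta))$ reads exactly $Y=Z$. To prove this identity I would note that $\partial_{\Delta,i}=\partial_{\Skel_i(\Delta),i}$, that $\Skel_i(\Delta)$ has no $(i+1)$-faces so $\tH_i(\Skel_i(\Delta))=\ker\partial_{\Skel_i(\Delta),i}$, and hence $\tb_i(\Skel_i(\Delta))=\dim_{\Q}\ker(\partial_{\Delta,i}\otimes\Q)$; rank--nullity applied to $\partial_{\Delta,i}\otimes\Q$ then gives $f_i(\Delta)=\tb_i(\Skel_i(\Delta))+\rk\partial_{\Delta,i}$, as claimed.

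With all three conditions rephrased as $X=Y$, $X=Z$, and $Y=Z$ respectively, the proposition is immediate. I do not anticipate a real obstacle: the only point requiring care is the bookkeeping in passing from $\Delta$ to $\Skel_i(\Delta)$ and the reduced-homology convention when $i=0$ (where $\partial_0$ is the augmentation map), but this causes no trouble, since $\Skel_0(\Delta)$ still has no $1$-faces and the rank--nullity computation goes through unchanged. This is the higher-dimensional incarnation of the familiar fact that, for a graph, any two of ``acyclic,'' ``same number of components,'' and ``$m-c$ edges'' force the third.
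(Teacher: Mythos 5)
Your proof is correct. The paper itself gives no proof of this proposition—it cites \cite[Prop~3.5]{DKM1} and \cite{DKM2}—and your reduction of the three conditions to the pairwise equalities $X=Y$, $X=Z$, $Y=Z$ among $\rk\pup{i}$, $f_i(\Upsilon)$, and $\rk\partial_{\Delta,i}$ (using that $\tH_i(\Upsilon)$ and $\tH_i(\Skel_i(\Delta))$ are kernels of boundary maps, hence free, together with rank--nullity over $\Q$ and the shared $(i-1)$-skeleton) is exactly the standard argument behind the cited result.
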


The proof of the following is in the appendix.  It generalizes a result
in~\cite{DKM1}, where it is proved with the assumptions that~$\Delta$ is pure,
that~$\tb_i(\Delta)=0$ for all~$i<d$, and
that~$\tH_{i-1}(\Upsilon)=0$.
\begin{theorem}\label{thm: reduced Laplacian iso}  Suppose that~$\Upsilon$ is
  an~$i$-dimensional spanning forest of~$\Delta$ such that
  $\tH_{i-1}(\Upsilon)=\tH_{i-1}(\Delta)$.
  Let~$\Theta:=\Delta_i\setminus\Upsilon_i$.  Define the~{\em reduced
  Laplacian}~$\tilde{L}$ of~$\Delta$ with respect to~$\Upsilon$ to be the square
  submatrix of~$L_{i}$ consisting of the rows and columns indexed by~$\Theta$.
  Then there is an isomorphism
  \[
    \crit_{i}(\Delta)\xrightarrow{\sim}\Z\Theta/\im\tilde{L}
  \]
  obtained by setting the faces of~$\Upsilon_{i}$ equal to~$0$.
\end{theorem}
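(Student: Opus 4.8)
The plan is to show that the map of the theorem — induced by the projection $\pi\colon C_i(\Delta)\to\Z\Theta$ that sets the $\Upsilon_i$-coordinates of a chain to $0$ — is well defined and bijective, by first identifying its natural domain. Since $\Upsilon$ is an $i$-forest, $\tH_i(\Upsilon)=0$, so the columns $\{\partial_i f:f\in\Upsilon_i\}$ of $\partial_{\Delta,i}$ are $\Z$-independent; and since $\Upsilon$ and $\Delta$ share their $(i-1)$-skeleton, the hypothesis $\tH_{i-1}(\Upsilon)=\tH_{i-1}(\Delta)$ (via the natural surjection) is equivalent to $\im\partial_{\Upsilon,i}=\im\partial_{\Delta,i}$, so these columns are in fact a $\Z$-basis of $\im\partial_i$. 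Hence $\partial_i$ restricts to an isomorphism $\Z\Upsilon_i\xrightarrow{\sim}\im\partial_i$, splitting $0\to\ker\partial_i\to C_i(\Delta)\to\im\partial_i\to0$ over $\Z$ and giving $C_i(\Delta)=\ker\partial_i\oplus\Z\Upsilon_i$. As $\Z\Theta$ is a second complement of $\Z\Upsilon_i$, the map $\pi$ restricts to an isomorphism $\phi\colon\ker\partial_i\xrightarrow{\sim}\Z\Theta$; since $\im L_i\subseteq\ker\partial_i$, this already yields $\crit_i(\Delta)=\ker\partial_i/\im L_i\xrightarrow{\;\sim\;}\Z\Theta/\pi(\im L_i)$, and the theorem reduces to the identity $\pi(\im L_i)=\im\tilde{L}$.

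One inclusion is immediate: for $\theta\in\Z\Theta$ the $\Theta$-coordinates of $L_i\theta$ are $\tilde{L}\theta$, so $\pi(L_i\theta)=\tilde{L}\theta$ and $\im\tilde{L}=\pi(L_i\Z\Theta)\subseteq\pi(\im L_i)$. For the reverse inclusion, using $C_i(\Delta)=\Z\Theta\oplus\Z\Upsilon_i$ it suffices to show $\pi(L_i f)\in\im\tilde{L}$ for each $f\in\Upsilon_i$, and since $L_i=\partial_{i+1}\partial_{i+1}^t$ this follows from the key claim
\[
  C_i(\Delta)=\Z\Theta+\ker L_i .
\]
Indeed, granting the claim, write $f=\theta+\kappa$ with $\theta\in\Z\Theta$ and $\kappa\in\ker L_i=\ker\partial_{i+1}^t$; then $L_i f=L_i\theta$, so $\pi(L_i f)=\tilde{L}\theta\in\im\tilde{L}$.

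To prove the key claim I would pass to the cochain complex. The star vectors $\partial_i^t(g)$, $g\in\Delta_{i-1}$, lie in $\ker\partial_{i+1}^t=\ker L_i$, and the coordinate projection $C_i(\Delta)\to C_i(\Upsilon)=\Z\Upsilon_i$ — which is restriction of cochains along $j\colon\Upsilon\hookrightarrow\Delta$ — carries $\im\partial_i^t$ onto $\im\partial_{\Upsilon,i}^t$. Because $\Upsilon$ has no $(i+1)$-faces, $\Z\Upsilon_i/\im\partial_{\Upsilon,i}^t=\tH^i(\Upsilon)$, and the image of $\ker L_i$ in $\Z\Upsilon_i$ is exactly the preimage of $\im\bigl(j^*\colon\tH^i(\Delta)\to\tH^i(\Upsilon)\bigr)$ under the quotient $\Z\Upsilon_i\twoheadrightarrow\tH^i(\Upsilon)$. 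Thus $C_i(\Delta)=\Z\Theta+\ker L_i$ is equivalent to surjectivity of the restriction map $j^*$ on cohomology. By the universal coefficient theorem, applied naturally, $\tH^i(\Upsilon)=\Ext^1(\tH_{i-1}(\Upsilon),\Z)$ since $\tH_i(\Upsilon)=0$, and $j^*$ restricts on the natural subgroup $\Ext^1(\tH_{i-1}(\Delta),\Z)\subseteq\tH^i(\Delta)$ to the functorial map $\Ext^1(j_*,\Z)\colon\Ext^1(\tH_{i-1}(\Delta),\Z)\to\Ext^1(\tH_{i-1}(\Upsilon),\Z)=\tH^i(\Upsilon)$; since $\tH_{i-1}(\Upsilon)=\tH_{i-1}(\Delta)$ makes $j_*$ an isomorphism on $(i-1)$-homology, $\Ext^1(j_*,\Z)$ is an isomorphism and $j^*$ is onto. (One can argue this instead through the long exact sequences of the pair $(\Delta,\Upsilon)$, whose relative chain complex is that of $\Delta$ truncated to a single copy of $\Z\Theta$ in degree $i$, using $\tH_i(\Upsilon)=0$ and the hypothesis to get $H_{i-1}(\Delta,\Upsilon)=0$ and $\tH_i(\Delta)\cong H_i(\Delta,\Upsilon)$.)

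I expect this last step — the integral identity $C_i(\Delta)=\Z\Theta+\ker L_i$, i.e.\ surjectivity of $j^*$ — to be the main obstacle. Over $\Q$ it is just a rank count: $\pi$ is injective on $\im\partial_{i+1}\subseteq\ker\partial_i$, so the $\Theta$-rows of $\partial_{i+1}$ span the same space as all of $\partial_{i+1}$. The real content is the torsion, and this is precisely where the hypothesis $\tH_{i-1}(\Upsilon)=\tH_{i-1}(\Delta)$ is used beyond the bare definition of a spanning forest, which only supplies the equality of ranks $\tb_{i-1}$. Everything else — the $\Z$-splitting, the easy inclusion, and the reductions above — is routine linear algebra over $\Z$.
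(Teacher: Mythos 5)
Your proof is correct, and its overall architecture is the same as the paper's: both arguments first use $\tH_i(\Upsilon)=0$ together with $\im\partial_{\Upsilon,i}=\im\partial_{\Delta,i}$ (the content of the hypothesis $\tH_{i-1}(\Upsilon)=\tH_{i-1}(\Delta)$) to identify $\ker\partial_{\Delta,i}$ with $\Z\Theta$ via the coordinate projection---the paper does this by exhibiting the explicit basis $\hat{\theta}=\theta-\alpha(\theta)$, you by splitting $C_i(\Delta)=\ker\partial_{\Delta,i}\oplus\Z\Upsilon_i$, which is the same decomposition---and both then reduce everything to the single key fact that each $\gamma\in\Upsilon_i$ is congruent modulo $\ker L_i=\ker\partial_{\Delta,i+1}^t$ to a chain supported on $\Theta$, i.e.\ your claim $C_i(\Delta)=\Z\Theta+\ker L_i$ (in the paper this appears as surjectivity of $\Z\Theta\to\Z\Delta_i/\ker L_i$, extracted via the snake lemma). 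The one place the two arguments genuinely diverge is the proof of that key fact. The paper dualizes the split exact sequence $\cok\partial_{\Delta,i+1}\approx\tH_i(\Delta)\oplus\Z\Upsilon_i$ and thereby writes down the required kernel element $\gamma+\sum_{\theta\in\Theta}a_\theta(\gamma)\,\theta$ explicitly; you instead recast the claim as surjectivity of the restriction map $j^*\colon\tH^i(\Delta)\to\tH^i(\Upsilon)$ and deduce it from naturality of the universal coefficient theorem, using that $\tH^i(\Upsilon)=\Ext^1(\tH_{i-1}(\Upsilon),\Z)$ and that $\Ext^1(j_*,\Z)$ is an isomorphism. Your route is more conceptual and makes transparent exactly where the torsion hypothesis enters, at the cost of importing UCT; the paper's is more elementary and self-contained and produces the lift explicitly. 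Both are complete.
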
 

\begin{definition}\label{def: forest number}  Define the {\em $i$-complexity} or {\em $i$-forest number} of~$\Delta$ to be
  \[
   \tau:=\tau_i(\Delta):=\sum_{\Upsilon\subseteq\Delta}|\T(\tH_{i-1}(\Upsilon))|^2
  \]
  where the sum is over all spanning~$i$-forests~$\Upsilon$ of~$\Delta$.
\end{definition}
\begin{prop}\label{prop: forest number 1}~$\tau_i(\Delta)=1$ if and only if~$\Skel_i(\Delta)$ is a
  spanning~$i$-forest of~$\Delta$ and~$\tH_{i-1}(\Delta)$ is torsion-free.
  If~$\Skel_i(\Delta)$ is a spanning~$i$-forest, regardless of
  whether~$\tH_{i-1}(\Delta)$ is torsion-free, then~$\Skel_i(\Delta)$ is the unique
  spanning~$i$-forest of~$\Delta$.
\end{prop}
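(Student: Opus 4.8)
The plan is to recast the statement in terms of the column matroid of the boundary matrix $\partial_{\Delta,i}$. Recall from the remarks following Definition~\ref{def: spanning forest} that, because every spanning $i$-forest $\Upsilon$ shares the $(i-1)$-skeleton of $\Delta$ (so that $\pup{i}$ is the restriction of $\partial_{\Delta,i}$ to $C_i(\Upsilon)$), the map $\Upsilon\mapsto\Upsilon_i$ is a bijection between spanning $i$-forests of $\Delta$ and the subsets of $\Delta_i$ whose columns in $\partial_{\Delta,i}$ form a $\Q$-basis of the column space of $\partial_{\Delta,i}$; moreover such forests always exist. Hence $\tau_i(\Delta)=\sum_{\Upsilon}\lvert\T(\tH_{i-1}(\Upsilon))\rvert^2$ is a sum of at least one positive-integer term, each term being the square of the order of a finite abelian group. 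Therefore $\tau_i(\Delta)=1$ precisely when there is a \emph{unique} spanning $i$-forest $\Upsilon$ and it moreover satisfies $\lvert\T(\tH_{i-1}(\Upsilon))\rvert=1$, i.e.\ $\tH_{i-1}(\Upsilon)$ is torsion-free. So it remains to (a) pin down when the column matroid of $\partial_{\Delta,i}$ has a unique basis, and (b) identify $\tH_{i-1}$ of that forest.

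For (a) I would first treat the ``if'' of the uniqueness claim: if $\Skel_i(\Delta)$ is a spanning $i$-forest, then taking $\Upsilon=\Skel_i(\Delta)$ (so $\Upsilon_i=\Delta_i$) in the basis characterization shows that \emph{all} columns of $\partial_{\Delta,i}$ are $\Q$-linearly independent; thus $\partial_{\Delta,i}$ is injective, its column space is $f_i$-dimensional, and any spanning $i$-forest $\Upsilon$ then has $\lvert\Upsilon_i\rvert=f_i$, forcing $\Upsilon_i=\Delta_i$ and $\Upsilon=\Skel_i(\Delta)$. Conversely, if $\Skel_i(\Delta)$ is not a spanning $i$-forest, the columns of $\partial_{\Delta,i}$ are $\Q$-linearly dependent, and since no column is the zero chain (for $i\ge 1$ the boundary of an $i$-simplex is an alternating sum of $i+1$ distinct $(i-1)$-faces; for $i=0$ it is the nonzero chain in $C_{-1}(\Delta)$), starting from any column-basis $B\subseteq\Delta_i$ and a column $e\in\Delta_i\setminus B$, writing $e=\sum_{b\in B}c_b\,b$ with some $c_{b_0}\ne 0$, the set $(B\setminus\{b_0\})\cup\{e\}$ is a second, distinct column-basis; so there are at least two spanning $i$-forests. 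Combining the two cases, there is a unique spanning $i$-forest if and only if $\Skel_i(\Delta)$ is one, in which event it equals $\Skel_i(\Delta)$ --- which is exactly the final sentence of the proposition.

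For (b), observe that $\tH_{i-1}$ is determined by $\partial_{i-1}$ and $\partial_i$, hence by faces of dimension at most $i$ only, so $\tH_{i-1}(\Skel_i(\Delta))=\tH_{i-1}(\Delta)$. Assembling the pieces yields the stated equivalence: if $\Skel_i(\Delta)$ is a spanning $i$-forest with $\tH_{i-1}(\Delta)$ torsion-free, then by (a) it is the only spanning $i$-forest and $\tau_i(\Delta)=\lvert\T(\tH_{i-1}(\Skel_i(\Delta)))\rvert^2=\lvert\T(\tH_{i-1}(\Delta))\rvert^2=1$; and if $\tau_i(\Delta)=1$, then by the first paragraph there is a unique spanning $i$-forest with torsion-free $\tH_{i-1}$, by (a) that forest must be $\Skel_i(\Delta)$, and then $\tH_{i-1}(\Delta)=\tH_{i-1}(\Skel_i(\Delta))$ is torsion-free. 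I expect no serious obstacle here; the one point that genuinely needs care is the ``no zero column'' observation in (a), which is precisely what makes the exchange step produce a distinct basis, and hence what forces a unique spanning forest to be $\Skel_i(\Delta)$ itself.
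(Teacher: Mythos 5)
Your proof is correct and follows essentially the same route as the paper's: both reduce the statement to the identification of spanning $i$-forests with column bases of $\partial_{\Delta,i}$, use the fact that no column of $\partial_{\Delta,i}$ is zero to force a unique basis to consist of all columns, and use $\tH_{i-1}(\Skel_i(\Delta))=\tH_{i-1}(\Delta)$ to finish. Your explicit basis-exchange step is just an unpacking of the paper's observation that a unique maximal independent set of nonzero columns must contain every column.
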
 
\begin{proof} Suppose that~$\tau_i(\Delta)=1$.  Then~$\Delta$ possesses a unique
  spanning~$i$-forest~$\Upsilon$, and~$\tH_{i-1}(\Upsilon)$ is torsion-free.
  Considering~$\partial_i$ as a matrix, it follows that its set of columns has a
  unique maximal linearly independent subset: those columns corresponding to the
  faces of~$\Upsilon$.  Since the columns of~$\partial_i$ are all nonzero, it
  must be that the columns corresponding to~$\Upsilon$ are the only columns,
  i.e.,~$f_i(\Upsilon)=f_i(\Delta)$, and hence~$\Upsilon=\Skel_i(\Delta)$.  It
  follows that $\tH_{i-1}(\Delta)=\tH_{i-1}(\Upsilon)$ and hence is
  torsion-free.
  
  Now suppose~$\Skel_i(\Delta)$ is a spanning~$i$-forest and
  let~$\Upsilon\subseteq\Delta$ be any spanning~$i$-forest.  Since
  $\tH_i(\Skel_i(\Delta))=0$, it follows from condition~\ref{sst3} of
  Definition~\ref{def: spanning forest} that
  \[
    f_i(\Upsilon)=f_i(\Delta)-\tb_i(\Skel_i(\Delta))=f_i(\Delta).
  \]
  Hence,~$\Upsilon=\Skel_i(\Delta)$.  So~$\Skel_i(\Delta)$ is the unique
  spanning~$i$-forest of~$\Delta$.  Further, if~$\tH_{i-1}(\Skel_i(\Delta))$ is
  torsion free, then~$\tau_i(\Delta)=|\T(\tH_{i-1}(\Delta))|^2=1$.
\end{proof}

\begin{theorem}[{\cite[Theorem 8.1]{DKM2}}]\label{thm: cut flow sequences}
  $|\T(\crit_{i-1}(\Delta))|=\tau_i(\Delta)$.\footnote{In~\cite{DKM2},
  this theorem is stated only for $i=\dim(\Delta)$.  The version
  stated here follows by restricting to~$\Skel_i(\Delta)$
  (cf.~Remark~\ref{skelitem}).}
\end{theorem}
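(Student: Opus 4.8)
The plan is to reduce to the case $i=\dim(\Delta)$, which is exactly~\cite[Theorem~8.1]{DKM2}, by passing to the $i$-skeleton. Put $\Delta':=\Skel_i(\Delta)$, a complex of dimension $i$. First I would invoke \cite[Theorem~8.1]{DKM2} in the form $|\T(\crit_{i-1}(\Delta'))|=\tau_i(\Delta')$, which applies since $i=\dim(\Delta')$ so that $i-1=\dim(\Delta')-1$ is the relevant codimension-one index there.

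Next I would verify the two identifications $\crit_{i-1}(\Delta)=\crit_{i-1}(\Delta')$ and $\tau_i(\Delta)=\tau_i(\Delta')$. The first is immediate from Remark~\ref{skelitem}: since $\partial_{\Delta,j}=\partial_{\Delta',j}$ for all $j\le i$, the chain modules $C_{i-2},C_{i-1},C_i$, the boundary maps between them, and hence the Laplacian $L_{i-1}=\partial_i\partial_i^t$ coincide whether computed in $\Delta$ or in $\Delta'$; therefore $\ker\partial_{i-1}/\im L_{i-1}$ is literally the same group. For the second, Remark~\ref{skelitem} also records that the spanning $i$-forests of $\Delta$ are exactly those of $\Delta'$, and the summand $|\T(\tH_{i-1}(\Upsilon))|^2$ appearing in Definition~\ref{def: forest number} depends only on $\Upsilon$ as an abstract complex, not on the ambient complex. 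Hence the two sums defining $\tau_i(\Delta)$ and $\tau_i(\Delta')$ range over the same subcomplexes with the same terms. Combining the three equalities gives $|\T(\crit_{i-1}(\Delta))|=|\T(\crit_{i-1}(\Delta'))|=\tau_i(\Delta')=\tau_i(\Delta)$.

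The only genuine content is \cite[Theorem~8.1]{DKM2} itself; the reduction above is bookkeeping, and I would not reprove the cited result. If one did want a self-contained argument, the main obstacle would be redoing that theorem, which proceeds via the cut/flow decomposition of $C_i$ together with a Cauchy--Binet (matrix-tree-type) expansion of the determinant of the reduced Laplacian of Theorem~\ref{thm: reduced Laplacian iso}, weighting each maximal minor by $|\T(\tH_{i-1}(\Upsilon))|^2$. The one point requiring care in the reduction is that~\cite[Theorem~8.1]{DKM2} imposes no hypotheses beyond those automatically satisfied by $\Delta'=\Skel_i(\Delta)$; should it carry purity or low-dimensional torsion-freeness assumptions, I would note they are harmless here, since passing to the $i$-skeleton changes neither $\crit_{i-1}$ nor the family of spanning $i$-forests, so any such hypothesis can be imposed on $\Delta'$ without affecting either side of the claimed identity.
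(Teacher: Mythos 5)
Your proposal is correct and matches the paper's treatment exactly: the paper does not reprove \cite[Theorem~8.1]{DKM2} but, as its footnote indicates, reduces the general-$i$ statement to the top-dimensional case by passing to $\Skel_i(\Delta)$, using Remark~\ref{skelitem} to see that both $\crit_{i-1}$ and the set of spanning $i$-forests (hence $\tau_i$) are unchanged under this restriction. Your write-up just makes that bookkeeping explicit.
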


\begin{cor}\label{cor: t=1} All~$(i-1)$-chains of degree~$0$
on~$\Delta$ are winnable if and only if~$\tau_i(\Delta)=1$.
\end{cor}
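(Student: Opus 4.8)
The plan is to deduce this from Theorem~\ref{thm: deg 0 iso}, Corollary~\ref{cor: effective degree 0}, and Theorem~\ref{thm: cut flow sequences}, so the corollary becomes essentially bookkeeping. The key identification: an $(i-1)$-chain $\sigma$ has $\deg(\sigma)=0$ (the zero vector in $\Z^{|\Hilb_{i-1}|}$) exactly when $\sigma\in(\ker^{+}L_{i-1})^{\perp}=(\ker L_{i-1})^{\perp}$, the last equality by Corollary~\ref{cor: equal perps}. Hence the set of linear-equivalence classes of degree-$0$ chains is the group $(\ker L_{i-1})^{\perp}/\im L_{i-1}$, which by Theorem~\ref{thm: deg 0 iso} equals $\T(\crit_{i-1}(\Delta))$, and by Theorem~\ref{thm: cut flow sequences} this group has order $\tau_i(\Delta)$.

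First I would prove the forward direction. Assume every $(i-1)$-chain of degree $0$ is winnable, and let $\sigma$ be such a chain. Then $\sigma\sim\tau$ for some effective $\tau$, and by Proposition~\ref{prop: invariance}, $\deg(\tau)=\deg(\sigma)=0$. Corollary~\ref{cor: effective degree 0} forces $\tau=0$, so $\sigma\sim 0$. Thus every class in $(\ker L_{i-1})^{\perp}/\im L_{i-1}$ is trivial, i.e. $\T(\crit_{i-1}(\Delta))=0$, whence $\tau_i(\Delta)=|\T(\crit_{i-1}(\Delta))|=1$. Conversely, if $\tau_i(\Delta)=1$ then $\T(\crit_{i-1}(\Delta))=0$, so via the identification above every $(i-1)$-chain of degree $0$ is linearly equivalent to the zero chain, which is effective; hence every such chain is winnable.

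I do not anticipate a genuine obstacle here; the only mild subtlety is to make sure ``degree $0$'' is read as the zero vector of $\Z^{|\Hilb_{i-1}|}$ and matched with membership in $(\ker L_{i-1})^{\perp}$ through Corollary~\ref{cor: equal perps}, and to invoke the degree-zero machinery in the correct dimension $i-1$. As an alternative packaging, one can argue directly with the finite group $\class_{i-1}^{0}(\Delta)$ of Remark~\ref{remark: computing classes}: winnability of all its members is equivalent to its being trivial, again by Corollary~\ref{cor: effective degree 0}, and its order is $\tau_i(\Delta)$ by Theorems~\ref{thm: deg 0 iso} and~\ref{thm: cut flow sequences}.
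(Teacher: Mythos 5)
Your proposal is correct and follows essentially the same route as the paper: winnability of a degree-$0$ chain is equivalent to linear equivalence with the zero chain (via Proposition~\ref{prop: invariance} and Corollary~\ref{cor: effective degree 0}), the degree-$0$ chains are exactly $(\ker L_{i-1})^{\perp}$, and Theorem~\ref{thm: deg 0 iso} together with Theorem~\ref{thm: cut flow sequences} converts triviality of $(\ker L_{i-1})^{\perp}/\im L_{i-1}=\T(\crit_{i-1}(\Delta))$ into $\tau_i(\Delta)=1$. No gaps; your explicit attention to reading ``degree $0$'' as the zero vector and to working in dimension $i-1$ matches the paper's implicit bookkeeping.
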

\begin{proof}  By Proposition~\ref{prop: invariance} and Corollary~\ref{cor:
  effective degree 0}, an~$(i-1)$-chain of degree~$0$ is winnable if and only if
  it is linearly equivalent to the zero chain. The~$(i-1)$-chains of degree~$0$
  are the elements~$(\ker^{+}L_{i-1})^{\perp}=(\ker L_{i-1})^{\perp}$. Hence, by
  Theorem~\ref{thm: deg 0 iso}, all~$(i-1)$-chains of degree~$0$ are winnable if
  and only if~$\T(\crit_{i-1}(\Delta))=0$.  The result then follows from
  Theorem~\ref{thm: cut flow sequences}.
\end{proof}

\begin{remark}\label{remark: disconnected} As discussed in the introduction,
  Corollary~\ref{cor: t=1} generalizes the result that all divisors of
  degree~$0$ on a graph are winnable if and only if the graph is a tree.
  However,  for graphs, Corollary~\ref{cor: t=1} says that all divisors of
  degree~$0$ on a {\em forest} are winnable.  This apparent contradiction is
  resolved by the fact that for unconnected graphs, our simplicial notion of
  degree differs from the usual one for graphs.  See Example~\ref{example: graphs}.  
\end{remark}

\begin{example}\label{example: real projective plane} Simply being a spanning
  tree is not enough to guarantee winnability of all degree~$0$ divisors.
  Figure~\ref{fig: real projective plane} illustrates a two-dimensional
  complex~$P$ which is a triangulation of the real projective plane.  We
  have~$\tH_0(P)=\tH_2(P)=0$, and~$\tH_1(P)\approx\Z/2\Z$.  Therefore,~$P$ is a
  spanning tree with tree number~$\tau_2(P)=4$.  The
  cycle~$\sigma:=\overline{12}+\overline{23}-\overline{13}$ is a~$1$-chain in
  the image of~$\partial_2$ and hence, by Remark~\ref{remark: homology
  invariant}, has degree $0$.  As argued in the first line of the
  proof of Corollary~\ref{cor: t=1}, if~$\sigma$ were winnable, it would be
  linearly equivalent to the zero chain.  We used Sage~(\cite{Sage}) to find
  that $\crit_1(P)\approx\Z/2\Z\times\Z/2\Z$ and~$\sigma\notin\im L_1$.
  Hence,~$2\sigma$ is winnable, but~$\sigma$ is not.
\begin{figure}[ht] 
  \centering
  \begin{tikzpicture}[scale=0.75]
  \def\r{2.0cm}
  \draw[fill=gray!40] ({\r*cos(0*60)},{\r*sin(0*60)})--({\r*cos(1*60)},{\r*sin(1*60)})--({\r*cos(2*60)},{\r*sin(2*60)})--({\r*cos(3*60)},{\r*sin(3*60)})--({\r*cos(4*60)},{\r*sin(4*60)})--({\r*cos(5*60)},{\r*sin(5*60)})--({\r*cos(6*60)},{\r*sin(6*60)});
  
  \draw ({\r*cos(0*60},{\r*sin(0*60)}) node[label=0*60:{$5$},ball] (1) {};
  \draw ({\r*cos(1*60},{\r*sin(1*60)}) node[label=1*60:{$6$},ball] (2) {};
  \draw ({\r*cos(2*60},{\r*sin(2*60)}) node[label=2*60:{$4$},ball] (3) {};
  \draw ({\r*cos(3*60},{\r*sin(3*60)}) node[label=3*60:{$5$},ball] (11) {};
  \draw ({\r*cos(4*60},{\r*sin(4*60)}) node[label=4*60:{$6$},ball] (22) {};
  \draw ({\r*cos(5*60},{\r*sin(5*60)}) node[label=5*60:{$4$},ball] (33) {};

  \def\s{0.8cm}
  \draw ({\s*cos(210},{\s*sin(210)}) node[label=210:{$1$},ball] (4) {};
  \draw ({\s*cos(-30},{\s*sin(-30)}) node[label=-30:{$2$},ball] (5) {};
  \draw ({\s*cos(90},{\s*sin(90)}) node[label=90:{$3$},ball] (6) {};

  \draw (22)--(4);
  \draw (22)--(5)--(33);
  \draw (5)--(1)--(6)--(2);
  \draw (6)--(3)--(4)--(11);
  \draw (6)--(4)--(5)--(6);
\end{tikzpicture}
\caption{A triangulation of the real projective plane.}\label{fig:
real projective plane} 
\end{figure}
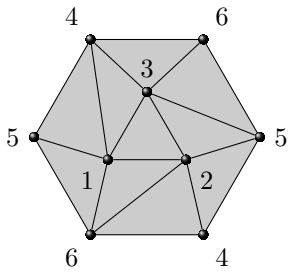
\end{example}

\begin{example}\label{example: counterintuitive} This example demonstrates that
  all~$i$-chains of degree~$0$ of a complex can be winnable, even though there
  are unwinnable~$i$-chains of nonnegative degree.  Let $\Delta$ be the
  three-dimensional simplicial complex with facets
\begin{align*}
&(1, 2, 3, 4), (1, 2, 3, 6), (1, 2, 3, 7), (1, 2, 4, 6), (1, 2, 5, 7),  (1, 3,
4, 7), (1, 3, 5, 7), (1, 4, 5, 6), (1, 4, 5, 7), 
\\&(1, 4, 6, 7), (2, 3, 4, 7), (2, 3, 5, 6), (2, 3, 5, 7), (2, 4, 5, 6), 
(3, 4, 5, 7), (3, 5, 6, 7), (4, 5, 6, 7).
\end{align*}
We have $\tilde{H}_3(\Delta) \cong 0$ and $\tilde{H}_2(\Delta) \cong
\mathbb{Z}$; so by Proposition~\ref{prop: forest number 1}, it follows that
$\Delta$ is a forest with $\tau_3(\Delta) = 1$. Corollary~\ref{cor: t=1} then
implies that all~$2$-chains on $\Delta$ of degree~$0$ are winnable.  

The Hilbert basis of~$\ker^{+}L_2$ for $\Delta$ has~$445$ elements.\footnote{We
used the PyNormaliz package in Sage~(\cite{Sage}) for the Hilbert basis
computations in this example.} Let~$A$ be the matrix whose rows are these Hilbert
basis elements.  Each~$2$-face of~$\Delta$ may be considered as a chain and,
thus, has a degree.  These degrees form the~$33$ columns of~$A$. It follows that
the degrees of all effective~$2$-chains are precisely the nonnegative integer
linear combinations of the columns of~$A$.  The Hilbert basis for the polyhedral
cone generated by the columns of~$A$ consists of the columns of~$A$ and one
other element~$\delta$.  By the characterization of the Hilbert basis,~$\delta$
cannot be realized by any effective two-chain, but using linear algebra it is
possible to find non-effective two-chains of degree~$\delta$, one of which is
\[
(1, 2, 3) - (1, 2, 7) + (1, 3, 5) + (1, 3, 6) + (1, 4, 6) + (1, 6, 7) + (2, 4,
5). 
\]
Thus, the above~$2$-chain is unwinnable but has nonnegative degree.
\end{example}

\subsection{Spanning trees acyclic in codimension one}\label{sect: staco}
\begin{definition}  For each integer~$i$, let
  \[
    \Lambda_i(\Delta) = \Span_{\Z_{\geq0}}\left\{
      \partial_{i+1}(f):f\in\Delta_{i+1}
    \right\}\subset C_{i}(\Delta):=\Z\Delta_{i}.
  \]
  and
  \[
    X_{i}(\Delta):=\left\{ \sigma\in C_{i}(\Delta):\partial_{i}(\sigma)\in\Lambda_{i-1}(\Delta) \right\}.
  \]
\end{definition}
The above definition was introduced by S.~Corry and L.~Keenan
(\cite{CorryKeenan}).  Since~$\Lambda_{-1}(\Delta)=\Z_{\geq0}$
and, therefore,~$X_0(\Delta)=\left\{ \sigma\in C_0(\Delta):\partial_0(\sigma)\geq0 \right\}$,
they regarded the sets~$X_i(\Delta)$ as generalizing the notion of divisors of
nonnegative degree on a graph and explored their relation to the winnability of the
dollar game.  They conjectured the equivalence of~\ref{p1} and~\ref{p2} in the
following proposition and proved it in the case~$i=2$ on a simplicial surface.

\begin{prop}\label{prop: staco} 
  The following are equivalent for~$i\leq d$:
  \begin{enumerate}[leftmargin=*,label=(\arabic*),font=\upshape]
    \item\label{p1} Every~$\sigma\in X_{i-1}(\Delta)$ is winnable.
    \item\label{p2} $\crit_{i-1}(\Delta)=0$.
    \item\label{p3} $\Skel_{i}(\Delta)$ is a spanning~$i$-tree of~$\Delta$ and
  $\tH_{i-1}(\Delta)=0$. 
  \end{enumerate}
  In particular, when~$i=d$, the three conditions are equivalent to~$\Delta$
  being a tree, acyclic in codimension one.
\end{prop}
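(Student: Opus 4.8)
The plan is to establish $(2)\Rightarrow(1)$, $(1)\Rightarrow(2)$, and $(2)\Leftrightarrow(3)$, and then to read off the last sentence by specializing to $i=d$.

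\emph{$(2)\Rightarrow(1)$:} Assume $\crit_{i-1}(\Delta)=\ker\partial_{i-1}/\im L_{i-1}=0$, so $\ker\partial_{i-1}=\im L_{i-1}$. For $\sigma\in X_{i-1}(\Delta)$, the defining condition $\partial_{i-1}(\sigma)\in\Lambda_{i-2}(\Delta)$ lets us write $\partial_{i-1}(\sigma)=\sum_{f\in\Delta_{i-1}}c_f\,\partial_{i-1}(f)$ with all $c_f\geq0$. Then $\eta:=\sum_f c_f f$ is effective, $\partial_{i-1}(\sigma-\eta)=0$, hence $\sigma-\eta\in\ker\partial_{i-1}=\im L_{i-1}$, so $\sigma\sim\eta\geq0$ and $\sigma$ is winnable.

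\emph{$(1)\Rightarrow(2)$:} Since $\im L_{i-1}\subseteq\ker\partial_{i-1}$ always, it suffices to prove the reverse inclusion. As $\Lambda_{i-2}(\Delta)$ is a monoid it contains $0$, so every $(i-1)$-cycle $\sigma$, and also $-\sigma$, lies in $X_{i-1}(\Delta)$. By hypothesis $\sigma\sim\eta$ and $-\sigma\sim\eta'$ with $\eta,\eta'$ effective; then $\eta+\eta'\sim\sigma+(-\sigma)=0$, so $\eta+\eta'$ is an effective chain of degree $\deg(0)=0$ by Proposition~\ref{prop: invariance}, and Corollary~\ref{cor: effective degree 0} forces $\eta+\eta'=0$, whence $\eta=\eta'=0$ by nonnegativity. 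Therefore $\sigma\sim0$, i.e., $\sigma\in\im L_{i-1}$, proving $\crit_{i-1}(\Delta)=0$.

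\emph{$(2)\Leftrightarrow(3)$:} A finitely generated abelian group is trivial iff both its torsion part and its free rank vanish. By Corollary~\ref{cor: critical group and homology} the free rank of $\crit_{i-1}(\Delta)$ equals $\tb_{i-1}(\Delta)$, so $\crit_{i-1}(\Delta)=0$ iff $\T(\crit_{i-1}(\Delta))=0$ and $\tb_{i-1}(\Delta)=0$. By Theorem~\ref{thm: cut flow sequences}, $\T(\crit_{i-1}(\Delta))=0$ iff $\tau_i(\Delta)=1$, and by Proposition~\ref{prop: forest number 1} this holds iff $\Skel_i(\Delta)$ is a spanning $i$-forest of $\Delta$ and $\tH_{i-1}(\Delta)$ is torsion-free. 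Adjoining $\tb_{i-1}(\Delta)=0$ promotes ``$\tH_{i-1}(\Delta)$ torsion-free'' to ``$\tH_{i-1}(\Delta)=0$'' and ``spanning $i$-forest'' to ``spanning $i$-tree'', which is precisely $(3)$. When $i=d$ we have $\Skel_d(\Delta)=\Delta$, so $(3)$ reads: $\Delta$ is a spanning $d$-tree of itself---equivalently, a tree---with $\tH_{d-1}(\Delta)=0$, i.e., $\Delta$ is a tree that is acyclic in codimension one.

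I do not anticipate a genuine obstacle. The two points to watch are: in $(1)\Rightarrow(2)$, using both $\sigma$ and $-\sigma$ so that Corollary~\ref{cor: effective degree 0} applies to the effective degree-$0$ chain $\eta+\eta'$; and in $(2)\Leftrightarrow(3)$, noting that Theorem~\ref{thm: cut flow sequences} controls only the torsion of the critical group, so the free part must be handled separately via Corollary~\ref{cor: critical group and homology}.
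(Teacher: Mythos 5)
Your proof is correct. The equivalence $(2)\Leftrightarrow(3)$ and the direction $(2)\Rightarrow(1)$ follow the same route as the paper: decompose an element of $X_{i-1}(\Delta)$ as an effective chain plus a cycle, and combine Theorem~\ref{thm: cut flow sequences}, Proposition~\ref{prop: forest number 1}, and Corollary~\ref{cor: critical group and homology} to handle torsion and free parts separately. The one place you genuinely diverge is $(1)\Rightarrow(2)$. The paper first proves, by a bare-hands argument on the lexicographically largest face in the support, that the only nonnegative element of $\ker\partial_{i-1}$ is $0$, and then concludes from $\sigma=\tau+\phi$ that $\sigma-\phi\in\mathcal{E}\cap\ker\partial_{i-1}=\{0\}$. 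You instead exploit that $X_{i-1}(\Delta)$ is closed under negation on cycles: winning both $\sigma$ and $-\sigma$ produces an effective chain $\eta+\eta'$ linearly equivalent to $0$, which Proposition~\ref{prop: invariance} and Corollary~\ref{cor: effective degree 0} kill. Your route leans on the degree machinery already built (ultimately Lemma~\ref{lemma: positive element}, which supplies a strictly positive element of $\ker L_{i-1}$), while the paper's route reproves a closely related positivity fact from scratch inside the proposition; both ultimately rest on the fixed standard orientation. Your version is arguably cleaner given the tools available by Section~\ref{sect: staco}, at the cost of invoking winnability twice per cycle rather than once.
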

\begin{proof} We first note that since~$\Delta$ has the standard orientation,
  the only nonnegative element of~$\ker\partial_{i-1}$ is~$0$.  To see this,
  suppose~$\sigma=\sum_{f\in\Delta_i}a_ff\neq0$ with~$a_f\geq0$ for all~$f$.
  Let~$\overbar{v_0\cdots v_i}$ be the lexicographically largest element in the
  support of~$\sigma$ (with~$v_0<\cdots<v_i$). For each~$v\in V$ such
  that~$v\leq v_0$, let~$g_v:=\overbar{vv_1\cdots v_i}$.  Then the coefficient
  of~$\overbar{v_1\cdots v_i}$ in~$\partial_{i-1}(\sigma)$ is~$\sum_{v\in
  V}a_{g_v}>0$.  Hence,~$\sigma\notin\ker\partial_{i-1}$.  We will need this
  fact later in the proof.
  
  Letting~$\mathcal{E}$ denote the set of effective~$(i-1)$-chains, we can
  write~$X_{i-1}(\Delta)=\mathcal{E}+ \ker\partial_{i-1}$.  Thus,~\ref{p1} is
  equivalent to $\mathcal{E}+ \ker\partial_{i-1}\subseteq\mathcal{E}+ \im
  L_{i-1}$, which in turn is equivalent to
  \smallskip

  \begin{enumerate}
    \item[\ref{p1}$^\prime$] $\qquad\displaystyle\mathcal{E}+
      \ker\partial_{i-1}=\mathcal{E}+ \im L_{i-1}$
  \end{enumerate}
  \smallskip

  \noindent since~$\im L_{i-1}\subseteq\ker\partial_{i-1}$. Now,
  if~$\crit_{i-1}(\Delta)=0$, then~$\im L_{i-1}=\ker\partial_{i-1}$,
  and~\ref{p1}$^\prime$ holds.  Conversely, suppose~\ref{p1}$^\prime$ holds, and
  let~$\sigma\in\ker\partial_{i-1}$.  By~\ref{p1}$^\prime$, there
  exist~$\tau\in\mathcal{E}$ and~$\phi\in\im
  L_{i-1}\subseteq\ker\partial_{i-1}$ such that~$\sigma=\tau+\phi$.  But
  then~$\sigma-\phi\in\mathcal{E}\cap\ker\partial_{i-1}=\left\{0 \right\}$,
  which implies~$\sigma=\phi\in\im L_{i-1}$.  It follows
  that~$\crit_{i-1}(\Delta)=0$.  Therefore,~\ref{p1} is equivalent to~\ref{p2}.

  We now prove the equivalence of~\ref{p2} and~\ref{p3} using
  Proposition~\ref{prop: forest number 1}.  If~$\crit_{i-1}(\Delta)=0$,
  then $1=|\T(\crit_{i-1})|=\tau_i(\Delta)$ by Theorem~\ref{thm: cut flow
  sequences}.  Further, the natural
  surjection~$\crit_{i-1}(\Delta)\to\tH_{i-1}(\Delta)$
  implies~$\tH_{i-1}(\Delta)=0$.  Hence, $\Skel_i(\Delta)$ is a
  spanning~$i$-tree of~$\Delta$.  Conversely, suppose that~$\Skel_i(\Delta)$ is
  a spanning~$i$-tree and~$\tH_{i-1}(\Delta)=0$.
  Then~$\tau_i(\Skel_i(\Delta))=1$, which implies that~$\crit_{i-1}(\Delta)$ is
  free by Theorem~\ref{thm: cut flow sequences}.  However, the free part
  of~$\crit_{i-1}(\Delta)$ is the same as the free part of~$\tH_{i-1}(\Delta)$
  by Corollary~\ref{cor: critical group and homology}.
  Therefore,~$\crit_{i-1}(\Delta)=0$.
\end{proof}


\begin{example}\label{example: staco}  This example shows that condition
  $\tH_{i-1}(\Delta)=0$ in part~\ref{p3} of Proposition~\ref{prop: staco} is
  necessary.  Consider the simplicial complex~$\Delta$ pictured in
  Figure~\ref{fig: staco}.  By inspection, $\tH_2(\Delta)=0$ and
  $\tH_{1}(\Delta)\simeq\Z\neq0$. So the complex is a forest but not a tree.

  One may compute directly that~$\crit_1(\Delta)\simeq \Z$ or argue as follows.
  By Proposition~\ref{prop: forest number 1}, we have~$\tau_2(\Delta)=1$.  By
  Theorem~\ref{thm: cut flow sequences}, it follows
  that~$|T(\crit_1(\Delta))|=1$.  Then Corollary~\ref{cor: critical group and
  homology} says~$\crit_1(\Delta)=\tH_1(\Delta)\simeq\Z$.  
  
  Now consider a generator for the first homology such as 
  \[
    \sigma=(0,0,0,1,-1,1)=\overline{23}-\overline{24}+\overline{34}.
  \]
  The Hilbert basis~$\Hilb_1$ for~$\ker^{+}L_1$, computed by Sage~(\cite{Sage}), is given by the rows of
  the table
  \[
  \begin{array}{cccccc}
      \overbar{12}&\overbar{13}&\overbar{14}&\overbar{23}&\overbar{24}&\overbar{34}\\\hline
      0 & 0 & 0 & 0 & 0 & 1 \\
      0 & 0 & 1 & 0 & 1 & 0 \\
      0 & 1 & 0 & 1 & 0 & 0 \\
      1 & 1 & 1 & 0 & 0 & 0
  \end{array}\ .
  \]
  Ordering the elements of~$\Hilb_1$ as they appear in the table, top-to-bottom,
  we have~$\deg(\sigma)=(1,-1,1,0)\not\geq0$. So~$\sigma$ is not winnable
  even though~$\partial_1(\sigma)=0\in\Lambda_0(\Delta)$.
\end{example}
\begin{figure}[ht] 
  \centering
  \begin{tikzpicture}[scale=0.4] 
  \draw (0,0) node[below]{$1$} -- (0,2) node[above]{$2$};
  \draw (-2.5,3.5) node[left]{$3$} -- (2.5,3.5) node[right]{$4$};
  \draw[fill=gray!30] (0,0)--(0,2)--(-2.5,3.5)--(0,0);
  \draw[fill=gray!30] (0,0)--(0,2)--(2.5,3.5)--(0,0);
\end{tikzpicture} 
\caption{A simplicial complex with facets~$\overbar{123}$, $\overbar{124}$, and
$\overbar{34}$ (cf.~Example~\ref{example: staco}).}\label{fig:
staco} 
\end{figure}

\section{Further work}\label{sect: further work}

There is still much to be learned about winnability of the dollar game on a
simplicial complex.  Here, we will present three general open areas of
investigation: computation of minimal winning degrees, algorithms for
determining winnability, and generalization of the rank function.

Theorem~\ref{thm: main} says there exists a realizable degree~$\delta$ such that
all~$i$-chains of degree at least~$\delta$ are winnable.  Call any minimal
such~$\delta$ a {\em minimal winning degree} for~$i$-chains on~$\Delta$.  For
divisors on connected graphs, there is one minimal winning
degree,~$g=|E|-|V|+1$.  We know of no such formulas in higher dimensions.  

\begin{enumerate}[leftmargin=*]
  \item Is there a simple combinatorial description of the
  set of minimal winning degrees for the~$i$-chains of a simplicial complex?  
  \item It would be nice to compute minimal winning degrees for a class of
    simplicial complexes.  For example,  what are the minimal winning degrees
    for~$(d-2)$-chains on the~$d$-dimensional simplex?  
\end{enumerate}

On a graph, there are three standard methods of determining whether the
dollar game is winnable, and if it is winnable, finding a sequence of moves leading to a
winning position.  One of these is a greedy algorithm.  It proceeds as follows:
\begin{enumerate}[leftmargin=*,label=(\roman*)]
  \item\label{gitem1} Check if the divisor is effective. If so, the divisor is
    winnable.  
  \item Modify the divisor by borrowing at any vertex with a negative amount of
    dollars, prioritizing vertices that have borrowed earlier in the algorithm.
  \item If all vertices have been forced to borrow, the original divisor is
    unwinnable.  Otherwise, return to step~\ref{gitem1}.
\end{enumerate}
The proof of the validity of this greedy algorithm
(cf.~\cite[Section~3.1]{Corry}) relies on two main facts. First, a vertex
cannot be brought out of debt by only borrowing at other vertices, and second,
the only way to leave a divisor unchanged through a series of
borrowing moves is to borrow at every vertex an equal number of times. Neither
of these two facts remains true for chains on a simplicial complex, so an immediate
translation of the greedy algorithm fails in higher dimensions. The ideas in
this paper suggest possible fixes for the second fact. For instance, one might
attempt to modify the algorithm to avoid borrowing at any combination of
vertices forming an element of the Hilbert basis~$\Hilb_i(\Delta)$ of the
nonnegative kernel~$\ker_i^+L_i$. Our attempts in this direction have failed due
to the first fact.  So we propose the question: 
\begin{enumerate}[leftmargin=*,start=3]
  \item  Can the greedy algorithm for the dollar game on graphs be generalized to
    one for simplicial complexes?
\end{enumerate}

Another method for determining winnability of the dollar game on a graph is
through~$q$-reduction of a divisor (\cite{Baker}, \cite{Shokrieh}).  In this
method, given a divisor, one computes a linearly equivalent standard form for
the divisor with respect to a chosen vertex~$q$.  The game is winnable if and
only if~$q$ is out of debt in this standard form.  Knowing whether~$q$-reduction
generalizes to chains on a simplicial complex would be of general interest to
the chip-firing community~(\cite[Problem 17]{AIMS}, \cite{Guzman2}).  Perhaps the
methods of~\cite{Perkinson} could be employed.  In that work,~$q$-reduction is
interpreted as an instance of Gr\"obner reduction of the lattice ideal of the
graph Laplacian.  We formulate the general question in the context of the dollar
game:
\begin{enumerate}[leftmargin=*,start=4]
  \item Can one define an efficiently computable standard representative of the
equivalence class of a chain on a simplicial complex which is effective if and
only if the chain is winnable?
\end{enumerate}

A third way of computing winnability for graphs is to determine whether a
certain simplex, defined using the columns of the Laplacian matrix, contains
integer points (cf.~\cite[Section 2.3]{Corry} or~\cite{Brauner}).  This method
easily extends to the dollar game on a simplicial complex, and it is the one we
use in our own computations.  However, the general problem of determining
whether a simplex has integer points is NP-hard unless the dimension is fixed.
Even so, for graphs,~$q$-reduction provides a method of determining winnability
of a divisor that is polynomial in the size of the divisor and the size of the
graph (\cite{Shokrieh}).
\begin{enumerate}[leftmargin=*,start=5]
  \item Is there any efficient algorithm for determining winnability of the
    dollar game on a simplicial complex?  
\end{enumerate}

The rank function, discussed in the introduction, is a measure of the robustness
of winnability of a divisor on a graph.  As noted in~\cite[Remark 1.13]{Baker},
for a divisor~$D$ on an algebraic curve, the same definition for rank would
give~$r(D)=\ell(D)-1$, where~$\ell(D)$ is the dimension of the vector space of
global sections of the line bundle associated with~$D$, appearing in the
standard formulation of the Riemann-Roch theorem for curves.  
The Riemann-Roch theorem for divisors~$D$ on an algebraic surface can be thought
of as a refinement of a lower bound on~$\ell(D)$ in terms of data associated
with~$D$ and the structure of the surface (by dropping the superabundance term).
This motivates the following:

\begin{enumerate}[leftmargin=*,start=6]
  \item Is there a generalization of the rank function to~$1$-chains on a
    simplicial complex of dimension~$2$, measuring robustness of
    winnability and perhaps related to the Riemann-Roch theorem for
    algebraic surfaces?  If so, can one find a combinatorial lower bound for it?
\end{enumerate}

\appendix\label{appendix}
\setcounter{secnumdepth}{0}  
\section{Appendix}
In this appendix, we prove Proposition~\ref{prop: pseudomanifold critical group}
and Theorem~\ref{thm: reduced Laplacian iso}.  The proof of Proposition~\ref{prop:
pseudomanifold critical group} requires the following lemma.

\begin{lemma}\label{lemma: pseudomanifold}
  Let $\Delta$ be a~$d$-dimensional orientable pseudomanifold without boundary. 
  Let~$\gamma_1,\dots,\gamma_m$ be the facets of~$\Delta$ oriented so
  that~$\gamma=\gamma_1+\dots+\gamma_m$ is a pseudomanifold orientation
  for~$\Delta$, i.e., such that~$\partial_d(\gamma)=0$.  Let $\sigma, \tau$ be
  two $(d-1)$-chains in the image of $\partial_d$, and write
\[
\sigma = \sum_{i=1}^m s_i\partial_d(\gamma_i) , 
\qquad \tau = \sum_{i=1}^mt_i\partial_d(\gamma_i) 
\] 
for some integers $\left\{ s_i \right\}$ and $\left\{ t_i \right\}$. Then
$\sigma$ and~$\tau$ are linearly equivalent if and only if $\sum_{i=1}^m
s_i=\sum_{i=1}^m t_i \bmod m$.
\end{lemma}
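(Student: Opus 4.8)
The plan is to translate linear equivalence of $\sigma$ and $\tau$ into a congruence on the coefficient sums by moving the whole argument up one dimension, into $C_d(\Delta)$.

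First I would reduce to a statement about $d$-chains. Set $c := \sum_{i=1}^m (s_i - t_i)\gamma_i \in C_d(\Delta)$, so that $\sigma - \tau = \partial_d(c)$. Since $L_{d-1} = \partial_d\partial_d^t$, I claim $\sigma \sim \tau$ — i.e.\ $\partial_d(c) \in \im L_{d-1}$ — if and only if $c \in \im\partial_d^t + \ker\partial_d$. The ``if'' direction is immediate from $\partial_d(\ker\partial_d) = 0$; for ``only if'', from $\partial_d(c) = \partial_d\partial_d^t w$ one gets $c - \partial_d^t w \in \ker\partial_d$. Because $\Delta$ is an orientable pseudomanifold without boundary, $\ker\partial_d = H_d(\Delta) = H_d(\Delta,\partial\Delta)$ is infinite cyclic with generator $\gamma$ (Section~\ref{sect: pseudomanifolds}; note $\gamma$ is primitive, having all coefficients $\pm1$, so it really does generate the saturated rank-one lattice $\ker\partial_d$). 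Hence the condition becomes $c \in \im\partial_d^t + \Z\gamma$.

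Next I would pin down $\im\partial_d^t$ as an explicit sublattice of $C_d(\Delta)$. Using the basis $\gamma_1,\dots,\gamma_m$ of $C_d(\Delta)$, for which $\langle\gamma_i,\gamma_j\rangle = \delta_{ij}$ and $\langle\gamma,\gamma\rangle = m$, and recalling that every $(d-1)$-face $\rho$ is interior (as $\partial\Delta = \emptyset$), with $\partial_d^t(\rho) = \gamma_{\rho^+} - \gamma_{\rho^-}$, the generators of $\im\partial_d^t$ are exactly the edge vectors of the $\gamma$-incidence graph $\Gamma(\Delta,\gamma)$, which is connected because $\Delta$ is strongly connected. A standard fact about connected graphs — pass to a spanning tree to express each $\gamma_i - \gamma_1$, then write any zero-sum vector in terms of these — shows that the $\Z$-span of these edge vectors is the full ``zero-sum'' lattice $\{v \in C_d(\Delta) : \langle v,\gamma\rangle = 0\}$. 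Thus $\im\partial_d^t = \{v \in C_d(\Delta) : \langle v,\gamma\rangle = 0\}$.

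Finally I would combine the two steps: since $\langle\gamma,\gamma\rangle = m$, a chain $c$ lies in $\im\partial_d^t + \Z\gamma$ if and only if $\langle c,\gamma\rangle \in m\Z$ — the forward inclusion is clear, and conversely if $\langle c,\gamma\rangle = km$ then $c - k\gamma$ pairs to zero with $\gamma$ and so lies in $\im\partial_d^t$. As $\langle c,\gamma\rangle = \sum_i (s_i - t_i) = \sum_i s_i - \sum_i t_i$, this is precisely the assertion $\sigma \sim \tau \iff \sum_i s_i \equiv \sum_i t_i \pmod{m}$. The step I expect to be the main obstacle is the middle one: identifying $\im\partial_d^t$ on the nose as a lattice (not merely rationally, nor only up to finite index), and the $\gamma$-incidence graph together with strong connectedness is the right tool to make that precise.
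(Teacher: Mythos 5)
Your proof is correct and rests on the same two pillars as the paper's: strong connectivity forces $\im L_{d-1}$ (equivalently, $\im\partial_d^t$ in your lift to $C_d(\Delta)$) to be the full ``zero-sum'' lattice in the $\gamma_i$-coordinates, and $\ker\partial_d=\Z\gamma$ with $\langle\gamma,\gamma\rangle=m$ produces the congruence modulo~$m$. The only difference is bookkeeping: you work upstairs in $C_d(\Delta)$ with the criterion $c\in\im\partial_d^t+\Z\gamma\iff\langle c,\gamma\rangle\in m\Z$, while the paper works downstairs in $C_{d-1}(\Delta)$ and resolves the non-uniqueness of the coefficients $a_i$ by the same appeal to $\ker\partial_d=\Z\gamma$.
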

\begin{proof} Let $\xi$ be a $(d-1)$-face of $\Delta$.  Then~$\xi$ is contained
  in exactly two facets, say $\gamma_i$ and $\gamma_j$, and $L_{d-1}(\xi) =\pm(
  \partial_d(\gamma_i) - \partial_d(\gamma_j))$.  By strong connectivity, it
  follows that~$\partial_d(\gamma_i)-\partial_d(\gamma_j)$ is in the image
  of~$L_{d-1}$ for {\em
  any} pair~$1\leq i,j\leq m$, and thus,
  \[
    \im(L_{d-1})=\Span_{\Z}\left\{ \partial_d(\gamma_i)-\partial_d(\gamma_j):
    1\leq i,j\leq m \right\}
    =\textstyle\left\{ \sum_{i=1}^ma_i\partial_d(\gamma_i):\sum_{i=1}^ma_i=0 \right\}.
  \]
  So linear equivalence of~$\sigma$ and~$\tau$ is equivalent to being able to
  write
  \begin{equation}\label{eqn: cond1}
    \sum_{i=1}^{m}(s_i-t_i)\partial_d(\gamma_i)=\sum_{i=1}^{m}a_i\partial_d(\gamma_i)
  \end{equation}
  for some integers~$a_i$ summing to~$0$.  Since the~$\partial_d(\gamma_i)$ do
  not form a basis for the image of~$\partial_d$, we cannot directly conclude
  something about the relation between the coefficients on both sides of
  equation~\eqref{eqn: cond1}. However, note that the existence of arbitrary
  integers~$a_i$ (not necessarily summing to~$0$) such that equation~\eqref{eqn:
  cond1} holds is equivalent to
  \[
    \rho:=\sum_{i=1}^{m}(s_i-t_i-a_i)\gamma_i\in C_d(\Delta)
  \]
  being in~$\ker\partial_d=H_d(\Delta)=\Z\gamma$, and thus to the existence of
  an integer~$\ell$ such that~$\rho=\ell(\gamma_1+\dots+\gamma_m)$.
  In this case, since the~$\gamma_i$ form a basis for~$C_d(\Delta)$, we
  conclude~$s_i-t_i-a_i=\ell$ for~$i=1,\dots,m$.  Summing, we have
  \[
    \sum_{i=1}^{m}s_i=\sum_{i=1}^{m}t_i+\sum_{i=1}^{m}a_i\bmod m.
  \]
  The result follows: if~$\sigma$ and~$\tau$ are linearly equivalent, we can
  take~$\sum_{i=1}^{m}a_i=0$ and
  conclude that~$\sum_{i=1}^{m}s_i=\sum_{i=1}^{m}t_i\bmod m$.  Conversely,
  if~$\sum_{i=1}^{m}s_i=\sum_{i=1}^{m}t_i+\ell m$ for some integer~$\ell$,
  set~$a_i:=s_i-t_i-\ell$ for all~$i$.  Then~\eqref{eqn: cond1} holds, and
  so~$\sigma$ and $\tau$ are linearly equivalent.
\end{proof}

\begin{proof}[Proof of Proposition~\ref{prop: pseudomanifold critical group}]
The projection mapping from the critical group to the relative homology group in
codimension one gives the short exact sequence
\begin{equation}\label{eqn: pseudo}
  0 \to \im\partial_d/\im L_{d-1} \to \crit_{d-1}(\Delta) \to \tH_{d-1}(\Delta) \to 0.
\end{equation}
Let~$\gamma=\gamma_1+\dots+\gamma_m$ be as in the statement of
Lemma~\ref{lemma: pseudomanifold}, and 
first consider the case where~$\partial\Delta\neq\emptyset$.
Reasoning as in the beginning of the lemma, we still
have
\[
  X:=\Span_{\Z}\left\{\partial_d(\gamma_i)-\partial(\gamma_j):1\leq i,j\leq
  m\right\}\subseteq\im L_{d-1}.
\]
Given any~$f\in\partial\Delta$, there exists a unique~$\gamma_k$ whose
boundary contains~$f$ in its support.
Hence,~$L_{d-1}(f)=\pm\partial_d(\gamma_k)$.
Since~$\im L_{d-1}$ contains~$X$ and~$\partial_{d}(\gamma_k)$, it contains all
of the~$\im\partial_{d}(\gamma_i)$. So~$\im L_{d-1}=\im\partial_d$, and
hence,~$\crit_{d-1}(\Delta)=\tH_{d-1}(\Delta)$, as claimed.

Now consider the case where~$\partial\Delta=\emptyset$.  Since~$\Delta$ is an
orientable pseudomanifold,~$\tH_{d-1}(\Delta)$ is torsion-free, and thus
sequence~\eqref{eqn: pseudo} splits. By the lemma, the mapping
\begin{align*}
  \Z/m\Z&\to\im\partial_d/\im L_{d-1}\\
  k&\mapsto k\partial_d(\gamma_1)
\end{align*}
is an isomorphism.  The result follows.
\end{proof}

Our proof of Theorem~\ref{thm: reduced Laplacian iso} follows the general
outline of that in~\cite{DKM1} with substantial modifications.
\begin{proof}[Proof of Theorem~\ref{thm: reduced Laplacian iso}]  Considering the commutative diagram
\begin{center}
\begin{tikzcd}
  \Z\Upsilon_i\arrow[r,"\pup{i}"]\arrow[d]
  &\Z\Upsilon_{i-1}\arrow[r,"\pup{i-1}"]\arrow[d,equal]
  &\Z\Upsilon_{i-2}\arrow[d,equal]\\
  \Z\Delta_i\arrow[r,"\partial_{\Delta,i}"]
  &\Z\Delta_{i-1}\arrow[r,"\partial_{\Delta,i-1}"]
  &\Z\Delta_{i-2}
\end{tikzcd},
\end{center}
  we see
\[
  \im\pup{i}\subseteq\im\partial_{\Delta,i}\subseteq\ker\partial_{\Delta,i-1}=
  \ker\pup{i-1}.
\]
Thus, there is a short exact sequence
\[
  0\to\im\partial_{\Delta,i}/\im\pup{i}\to\tH_{i-1}(\Upsilon)\to\tH_{i-1}(\Delta)\to0.
\]
By hypothesis,~$\tH_{i-1}(\Upsilon)=\tH_{i-1}(\Delta)$, and hence
\begin{equation}\label{eqn: images equal}
  \im\pup{i}=\im\partial_{\Delta,i}.
\end{equation}

  We now describe a basis for~$\ker\partial_{\Delta,i}$.  For each~$\theta\in\Theta$,
  since~$\im\pup{i}=\im\partial_{\Delta,i}$, 
  \begin{equation}\label{eqn: coords}
    \partial_{\Delta,i}(\theta)=\sum_{\tau\in\Upsilon_i}a_{\theta}(\tau)\pup{i}(\tau)
  \end{equation}
  for some~$a_{\theta}(\tau)\in\Z$.  Since~$\tH_i(\Upsilon)=0$, the boundary
  mapping~$\pup{i}$ is injective, and thus the coefficients~$a_{\theta}(\tau)$
  are uniquely determined. Define
  \[
    \alpha(\theta):=\sum_{\tau\in\Upsilon_i}a_{\theta}(\tau)\tau
  \]
  and extend linearly to get a well-defined
  mapping~$\alpha:\Z\Theta\to\Z\Upsilon_i$.
  For each~$\theta\in\Theta$, let
  \[
    \hat{\theta}:=\theta-\alpha(\theta).
  \]
  We claim 
  \[
    \ker\partial_{\Delta,i}
    =\{\hat{\theta}:\theta\in\Theta\}.
  \]
  The~$\hat{\theta}$ are linearly independent elements of the kernel.
  To show they span, suppose~$\gamma=\sum_{\sigma\in\Delta_i}b_{\sigma}\sigma\in\ker\partial_{\Delta,i}$. 
  Consider
  \[
    \gamma':=\gamma-\sum_{\sigma\in\Theta}b_{\sigma}\hat{\sigma}
    =\sum_{\sigma\in\Upsilon_i}b_{\sigma}\sigma+\sum_{\sigma\in\Theta}b_{\sigma}(\sigma-\hat{\sigma})
    =\sum_{\sigma\in\Upsilon_i}b_{\sigma}\sigma+\sum_{\sigma\in\Theta}b_{\sigma}\alpha(\sigma).
  \]
  Then since~$\gamma$ and the~$\hat{\sigma}$ are in~$\ker\partial_{\Delta,i}$, so
is~$\gamma'$.  Further, since each~$\alpha(\sigma)\in\Z\Upsilon_i$, so
  is~$\gamma'$.  But~$\partial_{\Delta,i}$ restricted to~$\Upsilon_i$ is equal
  to~$\partial_{\Upsilon,i}$, which is injective. It follows that
  \[
    \gamma=\sum_{\sigma\in\Delta_i}b_{\sigma}\sigma=\sum_{\sigma\in\Theta}b_{\sigma}\hat{\sigma}.
  \]
  We thus have an isomorphism
  \[
    \pi\colon
    \Z\Theta\xrightarrow{\ \sim\ }\ker\partial_{\Delta,i}
  \]
  determined by~$\sigma\mapsto\hat{\sigma}$ with inverse given by setting
  elements of~$\Upsilon_i$ equal to~$0$:
  \[
    \sum_{\sigma\in\Delta_i}b_{\sigma}\sigma\xmapsto{\quad\ } \sum_{\sigma\in\Theta}b_{\sigma}\sigma.
  \]

  Next, we claim there is a commutative diagram with exact rows
  \begin{center}
    \begin{tikzcd}
      \Z\Theta\arrow[r,"\tilde{L}"]\arrow[d,"\iota"]
      &\Z\Theta\arrow[r]\arrow[d,"\pi","\sim" labl]
      &\cok\tilde{L}\arrow[r]\arrow[d,dashed]
      &0\\
      \Z\Delta_i\arrow[r,"L_i"]
      &\ker\partial_{\Delta,i}\arrow[r]
      &\crit_i(\Delta)\arrow[r]
      &0
    \end{tikzcd}
  \end{center}
  where~$\iota$ is the natural inclusion.  To check commutativity of the square
  on the left, let~$\theta\in\Theta$.  Then by definition of~$\tL$ and the fact
  that~$\iota(\theta)$ is supported on~$\Theta$,
  \[
    L_i\iota(\theta)=\rho+\tilde{L}\theta
  \]
  for some~$\rho\in\Z\Upsilon_i$.  We then
  have~$\pi^{-1}(\rho+\tilde{L}\theta)=\tilde{L}\theta$, as required.  Hence,
  there is a well-defined vertical mapping~$\cok\tL\to\crit_i(\Delta)$ on the
  right.  By the snake lemma, that mapping is an isomorphism if and only if
  the mapping
  \[
    \Z\Theta\to\Z\Delta_i/\ker L_i
  \]
  given by composing~$\iota$ with the quotient mapping is surjective.
  Therefore, to finish the proof, it suffices to show that for
  all~$\gamma\in\Upsilon_i$, there exists~$\delta\in\Z\Theta$ such
  that~$\gamma+\delta\in\ker L_i$ (so then~$\gamma=-\delta\bmod\ker L_i$).

  Now~$\ker L_i=\ker
  \partial_{\Delta,i+1}\partial_{\Delta,i+1}^t=\ker\partial_{\Delta,i+1}^t$. To get a
  description of~$\ker\partial_{\Delta,i+1}^t$, consider the exact sequence
  \[
    \Z\Delta_{i+1}\xrightarrow{\partial_{\Delta,i+1}}\Z\Delta_i\to\cok\partial_{\Delta,i+1}\to0.
  \]
  Applying the left-exact functor~$\Hom(\,\cdot\,,\Z)$, gives the exact sequence
  \begin{equation}\label{eqn: ker}
    \Z\Delta_{i+1}\xleftarrow{\partial_{\Delta,i+1}^t}\Z\Delta_i\leftarrow(\cok\partial_{\Delta,i+1})^*\leftarrow0,
  \end{equation}
  where we have identified~$\Z\Delta_{i}$ and~$\Z\Delta_{i+1}$ with their
  duals (using the bases~$\Delta_i$ and~$\Delta_{i+1}$, respectively).
  There is an exact sequence,
  \[
    0\to\ker\partial_{\Delta,i}/\im\partial_{\Delta,i+1}\to\Z\Delta_i/\im\partial_{\Delta,i+1}
    \to\Z\Delta_i/\ker\partial_{\Delta,i}\to0,
  \]
  i.e,
  \begin{equation}\label{eqn: cok}
    0\to\tH_i(\Delta)\to\cok\partial_{\Delta,i+1}\to\Z\Delta_i/\ker\partial_{\Delta,i}\to0.
  \end{equation}
  However,
  \[
    \Z\Delta_i/\ker\partial_{\Delta,i}\xrightarrow{\ \sim\
    }\im\partial_{\Delta,i}=\im\partial_{\Upsilon,i}\simeq\Z\Upsilon_i
  \]
  using~\eqref{eqn: images equal} and the fact that~$\partial_{\Upsilon,i}$ is
  injective.  Since~$\Z\Upsilon_i$ is free, sequence~\eqref{eqn: cok} splits:
  \begin{equation}\label{eqn: split}
    \cok\partial_{\Delta,i+1}\approx\tH_i(\Delta)\oplus\Z\Upsilon_i,
  \end{equation}
  with each~$\gamma\in\Upsilon_i$ identified with its class
  in~$\cok\partial_{\Delta,i+1}$.
  Given~$\gamma\in\Upsilon_i$, let~$\gamma^*\colon\Z\Upsilon_i\to\Z$ be the dual
  function.  Then use isomorphism~\eqref{eqn: split}, to identify~$\gamma^*$ with an element
  of~$(\cok\partial_{\Delta,i+1})^*$.  The
  image of~$\gamma^*$ in~$\Z\Delta_i$ under the mapping in~\eqref{eqn: ker} is
  \[
    \gamma+\sum_{\theta\in\Theta}a_{\theta}(\gamma)\theta,
  \]
  which by exactness of~$\eqref{eqn: ker}$ is an element
  of~$\ker\partial_{\Delta,i+1}^t$.
  Letting~$\delta:=\sum_{\theta\in\Theta}a_{\theta}(\gamma)\theta$, we
  see that~$\gamma+\delta\in\ker\partial_{\Delta,i+1}^t$, as required.
\end{proof}

\begin{remark} Theorem~\ref{thm: reduced Laplacian iso} generalizes Theorem~3.4
  of \cite{DKM1}.  Remark~3.5 of~\cite{DKM1} considers the case where~$\Delta$
  is the~$6$-vertex simplex,~$i=2$, and~$\Upsilon$ is a certain triangulation of
  the real projective plane (shown in Fig.~3 of~\cite{DKM3}).  In this case,
  \[
    \tH_1(\Delta)=0\neq\tH_1(\Upsilon)=\Z/2\Z,
  \]
  and
  \[
    \crit_2(\Delta)=\left(\Z/6\Z\right)^4\ \not\simeq\ \Z\Theta/\im\tL\simeq\left(
    \Z/12\Z \right)\oplus\left( \Z/6\Z
    \right)^3\oplus(\Z/2\Z).
  \]
  This example is given in~\cite{DKM1} to show that the
  condition~$\tH_{i-1}(\Delta)=\tH_{i-1}(\Upsilon)=0$ in Theorem~3.4 cannot be
  dropped.  Here, it serves the same purpose for the more relaxed
  hypothesis~$\tH_{i-1}(\Delta)=\tH_{i-1}(\Upsilon)$ of Theorem~\ref{thm:
  reduced Laplacian iso}.
\end{remark}

\bibliographystyle{amsplain}
\bibliography{simplicial_dollar_game.bib}
\end{document}